\tikzset{join/.code=\tikzset{after node path={%
\ifx\tikzchainprevious\pgfutil@empty\else(\tikzchainprevious)%
edge[every join]#1(\tikzchaincurrent)\fi}}}
\tikzset{>=stealth',every on chain/.append style={join},
         every join/.style={->}}
\tikzstyle{labeled}=[execute at begin node=$\scriptstyle,
\crefname{hypothesis}{Hypothesis}{Hypotheses}
\newcommand{\fe}{\mathrm{e}}
\newcommand{\eps}{\varepsilon}
\newcommand{\bT}{{\mathbb T}}
\newcommand{\norm}[1]{\left\Vert#1\right\Vert}
\def\ii{\textmd{i}}
\def\hh{\triangle t}
\title{Fourth-order uniformly accurate integrators with long time near conservations for the nonlinear Dirac equation in the nonrelativistic regime
\thanks{Submitted to the editors DATE.
\funding{B. Wang is supported by NSFC 12371403. }}}
\author{Lina Wang\thanks{School of Mathematics and Statistics, Xi'an Jiaotong University, 710049 Xi'an, China
  (wanglina@stu.xjtu.edu.cn).}
  \and Bin Wang\thanks{Corresponding author.
  School of Mathematics and Statistics, Xi'an Jiaotong University, 710049 Xi'an, China
  (wangbinmaths@xjtu.edu.cn).}
  \and Jiyong Li\thanks{School of Mathematics Science, Hebei Key Laboratory of Computatioanal Mathematics  and Applications, Hebei International Joint Research Center for Mathematics and Interdisciplinary Science, Hebei Normal University, Shijiazhuang 050024,  China
  (ljyong406@163.com).}
 }
\begin{document}

\maketitle

\begin{abstract}
In this paper, we propose two novel fourth-order integrators that exhibit uniformly high accuracy and long-term near conservations  for solving the nonlinear Dirac equation (NLDE) in the nonrelativistic regime. In this regime, the solution of the NLDE exhibits highly oscillatory behavior in time, characterized by a wavelength of $\mathcal{O}(\eps^{2})$ with a small parameter $ \eps>0$. To ensure uniform temporal accuracy, we employ a two-scale approach in conjunction with exponential integrators, utilizing operator decomposition techniques for the NLDE. The proposed methods are rigorously proved to achieve fourth-order uniform accuracy in time for all $\eps\in (0,1]$. Furthermore, we successfully incorporate symmetry into the integrator, and the long-term near conservation properties are analyzed through the modulated Fourier expansion. The proposed schemes are readily extendable to linear Dirac equations incorporating magnetic potentials, the dynamics of traveling wave solutions and the two/three-dimensional Dirac equations.   The validity of all theoretical findings and  extensions is numerically substantiated through     a series of numerical experiments.
\end{abstract}

\begin{keywords}
Nonlinear Dirac equation, Nonrelativistic regime, Uniformly accurate integrators, Long time near conservation, Two-scale formulation
\end{keywords}

\begin{AMS}
35Q41, 65M70, 65N35, 81Q05
\end{AMS}

\section{Introduction}

The Dirac equation, first proposed by Paul Dirac in 1928, serves as a fundamental framework for describing all massive spin-1/2 particles, including electrons, positrons, and so on \cite{PD1}. As a cornerstone of quantum physics, this equation elegantly unifies the principles of quantum mechanics and special relativity. In recent years, the Dirac equation has gained renewed significance due to its applicability in diverse theoretical studies, such as the electronic properties of graphite and graphene \cite{SGM,CG,NGMJ}, interactions of molecules with intense laser fields \cite{GLB1,GLB2}, and Bose-Einstein condensation phenomena \cite{BaoCai,HC}. Consequently, the numerical solution of the Dirac equation has garnered substantial attention from researchers across various disciplines.

In the study of the Dirac equation, the choice of parameters gives rise to a variety of regimes, among which are the classical regime, the massless regime, the semi-classical regime \cite{CLY, WHJY}, and the non-relativistic regime (\cite{BCJT16,BCJT,XST}).  In this work, we focus on the nonlinear Dirac equation (NLDE) in the nonrelativistic regime, specifically in the absence of a magnetic potential \cite{BCY}.    As demonstrated in \cite{BCJY, BY}, the three-dimensional  Dirac equation, expressed in its four-component form to describe the time evolution of spin-1/2 massive particles, can be reduced to a two-component form in two dimensions   and one dimension  under appropriate assumptions on the electromagnetic potentials. Therefore, here we focus on the two-component form  and it is noted that all the integrators developed in this paper can be straightforwardly extended to the Dirac equation of a four-component form.
 In one or two spatial dimensions, the equation can be expressed in a two-component form, characterized by a complex-valued vector wave function
 $\Phi:=\Phi(t,\mathbf{x})=(\phi_{1}(t,\mathbf{x}),\phi_{2}(t,\mathbf{x}))^{\intercal}\in \mathbb{C}^{2}$:
\begin{equation}\label{equ-1}
\begin{aligned}
  &\ii\partial_{t}\Phi=\bigg(-\frac{\ii}{\varepsilon}\sum\limits_{j=1}^{d}\sigma_{j}\partial_{j}+\frac{\sigma_{3}}{\varepsilon^2}\bigg)\Phi
  +V(\mathbf{x})\Phi+\mathbf{F}(\Phi)\Phi, \ d=1, 2, \ t>0, \ \Phi(0,\mathbf{x})=\Phi_{0}(\mathbf{x}), \ \mathbf{x}\in\mathbb{R}^{d},
\end{aligned}
\end{equation}
where $\ii=\sqrt{-1}$ is the imaginary unit, $t$ is time, $\varepsilon\in (0,1]$ is a dimensionless parameter inversely proportional to the speed of light,
$\mathbf{x} = (x_{1},\ldots, x_{d})^{\intercal}\in \mathbb{R}^d$ is the spatial coordinate vector, $\partial_{j}=\frac{\partial}{\partial x_{j}}\ (j=1,\ldots,d)$,
$V(\mathbf{x})$ is a real-valued function denoting the external electric potential, and
$\sigma_{1}, \sigma_{2}, \sigma_{3}$ are the Pauli matrices defined as
\begin{equation}\label{equ-3}
\begin{matrix}
\sigma_{1}=\left(\begin{array}{cc} 0 & 1 \\ 1 & 0 \\ \end{array}\right),
\end{matrix}
\quad
\begin{matrix}
\sigma_{2}=\left(\begin{array}{cc} 0 & -\ii \\ \ii & 0 \\ \end{array}\right),
\end{matrix}
\quad
\begin{matrix}
\sigma_{3}=\left(\begin{array}{cc} 1 & 0 \\ 0 & -1 \\ \end{array}\right).
\end{matrix}
\end{equation}
The commonly used form for nonlinearity $\mathbf{F}(\Phi)$ in \eqref{equ-1} is
\begin{align}\label{equ-4}
\mathbf{F}(\Phi)=\lambda_{1}(\Phi^{*}\sigma_{3}\Phi)\sigma_{3}+\lambda_{2}|\Phi|^{2}I_{2},
\end{align}
with $|\Phi|^{2}=\Phi^{*}\Phi$, where $\lambda_{1}, \lambda_{2}\in \mathbb{R}$ are two given real constants, $\Phi^{*}=\overline{\Phi}^{\intercal}$ is the complex conjugate transpose of $\Phi$. The   nonlinearity is inspired by two distinct physical contexts: the Soler model in quantum field theory, where
$\lambda_{1}\neq 0$ and $\lambda_{2}=0$ \cite{FLR,FS}, and the Bose-Einstein condensation (BEC) with chiral confinement and spin-orbit coupling, where
$\lambda_{1}=0$ and $\lambda_{2}\neq 0$ \cite{CEL,HC,LWC}. It is well known that the solution of  NLDE  \eqref{equ-1} conserves the total mass
\begin{align}\label{equ-6}
M(\Phi):= \int_{\mathbb{R}^{d}}|\Phi(t,\mathbf{x})|^{2}d\mathbf{x} = \int_{\mathbb{R}^{d}}\sum\limits_{j=1}^{2}|\phi_{j}(t,\mathbf{x})|^{2}
d\mathbf{x}   \equiv M(\Phi_0),
\end{align}
and the total energy
\begin{align}\label{equ-7}
E(\Phi):&=\int_{\mathbb{R}^{d}}\bigg(-\frac{\ii}{\varepsilon}\sum\limits_{j=1}^{d}\Phi^{*}\sigma_{j}\partial_{j}\Phi+\frac{1}{\varepsilon^{2}}\Phi^{*}\sigma_{3}\Phi +V(\mathbf{x})|\Phi|^{2}+\frac{\lambda_{1}}{2}(\Phi^{*}\sigma_{3}\Phi)^{2}+\frac{\lambda_{2}}{2}|\Phi|^{4} \bigg)d\mathbf{x} \equiv E(\Phi_0).
\end{align}

\begin{figure}[t!]
$$\begin{array}{cc}
\psfig{figure=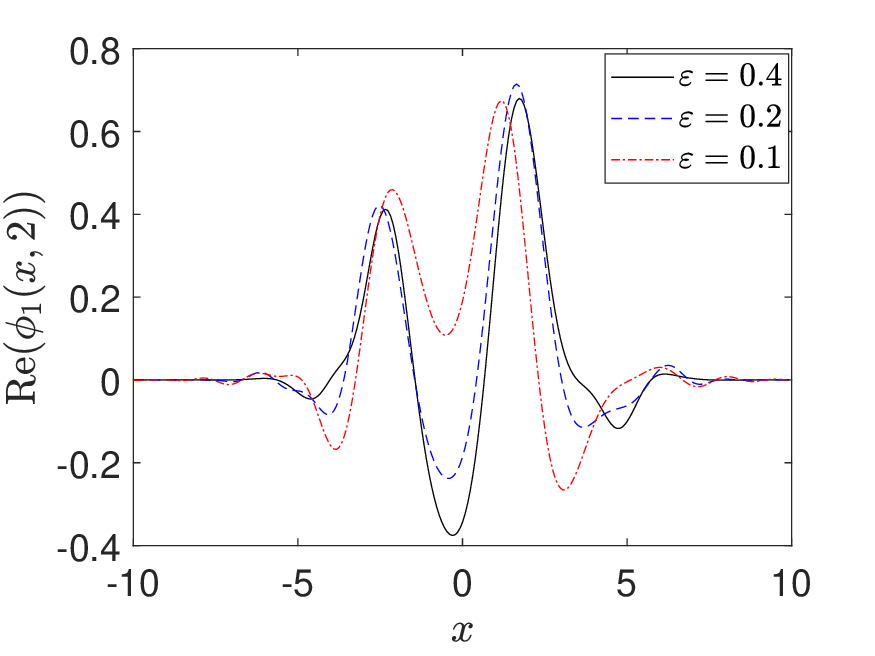,height=3.0cm,width=6.0cm}
\psfig{figure=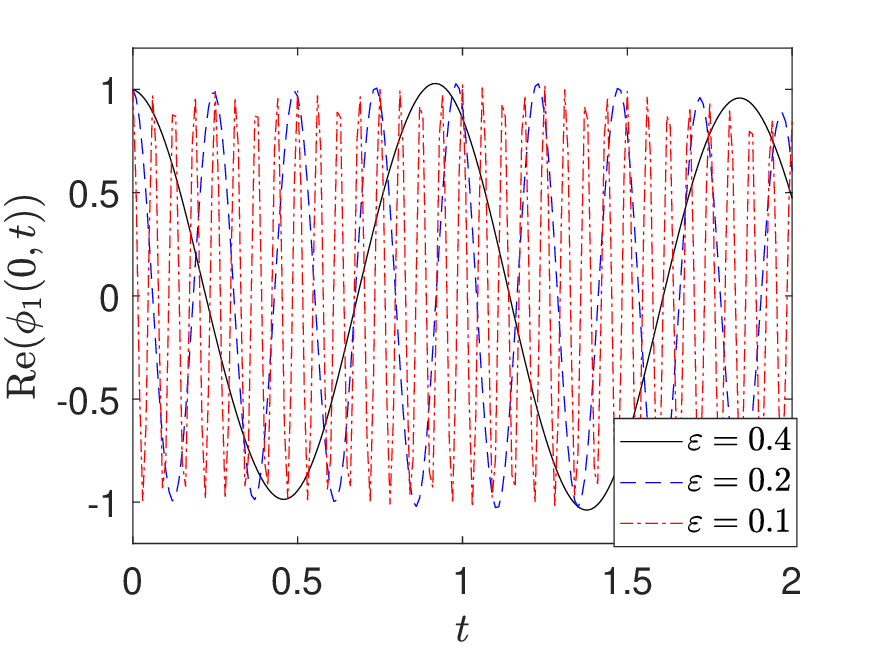,height=3.0cm,width=6.0cm}
\end{array}$$
\caption{The solution $\phi_{1}(t=2,x)$ and $\phi_{1}(t,x=0)$ of the Dirac equation \eqref{equ-1} with $d=1$ for different $\eps$ (Re($\phi_{1}$) denotes the real part of $\phi_{1}$).}\label{fig-3-1-4}
\end{figure}

From an analytical perspective, the existence and diversity of bound states or standing wave solutions have been extensively studied in \cite{BCDM,BD,CV,DES,ES}. Notably, for the specific choice of $F(\Phi)$ with $\lambda_{1}=-1, \lambda_{2}=0$ and $d=1$, $V(x)=0$, exact soliton solutions to the Dirac equation \eqref{equ-1} have been established \cite{M,T79,ST,XST}.
On the numerical front, a variety of methods have been developed to solve the Dirac equation \eqref{equ-1} such as finite difference time domain (FDTD) methods \cite{BCJY,BHM,HPA,NSG}, time-splitting and time-splitting Fourier pseudospectral (TSFP) methods \cite{BCY,BCJY,BY,BL,FS89,HJMSZ}, exponential wave integrator Fourier pseudospectral (EWI-FP) methods \cite{BCJY,L1}, and Runge-Kutta discontinuous Galerkin methods \cite{HL}. It is worth noting that when
$0<\eps\ll 1$, the solution of the Dirac equation \eqref{equ-1} exhibits wave propagation with spatial wavelengths of $\mathcal{O}(1)$ and temporal wavelengths of $\mathcal{O}(\eps^{2})$. To further illustrate this behavior, Figure \ref{fig-3-1-4} presents the numerical solution of the Dirac equation \eqref{equ-1} with $d=1$, $V(x)=\frac{1-x}{1+x^{2}}$, $\lambda_{1}=-1$, $\lambda_{2}=0$ and $\Phi_{0}(x)=\left(\exp(-x^{2}/2),\exp(-(x-1)^{2}/2)\right)^{\intercal}$ for different $\eps$. It can be clearly observed that the small temporal wavelength of the solution induces high temporal oscillations, posing significant challenges for both theoretical analysis and classical numerical discretization in the nonrelativistic regime where $0<\eps\ll 1$. To   ensure the accuracy of numerical solutions, the time step and spatial mesh size in traditional  methods have to depend on $\eps$. Consequently, developing numerical methods that permit the use of $\eps$-independent time step  is both essential and highly challenging to effectively address these temporal oscillations.

Recently, significant advancements have been achieved in the numerical  solution of highly oscillatory  problems through the  development of uniformly accurate (UA) schemes \cite{BCZ,BDZ,BZ,CCLM,Zhao}.  Both the time stepsize and the accuracy are independent of $\eps$, which is the primary advantage of UA methods.  Improved   error bounds on  splitting methods for the Dirac equation with weakly nonlinear or small potentials have been analyzed in \cite{BCF,BFY}.  Utilizing two-scale formulation, two numerical schemes with first- and second-order uniform accuracies  are constructed for  the nonlinear Dirac equation  in the nonrelativistic regime \cite{LMZ}. The work \cite{BCJT16} proposed a novel method  which   achieves first-order uniform accuracy  for all $\eps\in(0,1]$ and optimal  second-order accuracy in the regimes when either $\eps=O(1)$ or $0<\eps\lesssim \triangle t$ for a time stepsize $\triangle t$.  Another method with  similar accuracy   has been proposed in  \cite{CW2} also for  the nonlinear Dirac equation.  Additionally, uniform first- and second-order Picard iteration approaches   have been applied to the nonlinear Dirac equation in \cite{CW}, and UA methods up to third order have been formulated for the linear Dirac equation in  \cite{CW1}.
Despite these great advancements, the existing UA methods are limited to second-order temporal accuracy for the nonlinear Dirac equation and third-order temporal accuracy for the linear case, and their formulation and analysis are computationally intensive.  Moreover, while various structure-preserving  methods have been investigated for the mass and energy conservation properties of the Dirac equation \cite{L1,L2}, a thorough long-time analysis of UA schemes for the nonlinear Dirac equation is notably absent. In summary, for the nonlinear Dirac equation  \eqref{equ-1} in the nonrelativistic limit, it seems that all the UA schemes are constrained to second-order accuracy, and a detailed long-time analysis of their conservation properties   is absent.

The objective of this work  is to develop and analyze a family of  high order  uniformly accurate integrators and the main  contributions  are as follows.
a) This paper provides a general framework for the construction  of high order UA algorithms  for the NLDE.  Compared with   the existing UA methods, the accuracy of the proposed  integrators are improved to fourth order, which is   very competitive  in the numerical computation of the NLDE.
  b) Utilizing the robust modulated Fourier expansion technique, we conduct a long-time analysis of the developed symmetric UA algorithm. The analysis reveals that the novel method   maintains the energy and mass conservations of the NLDE   over long times.

The article is structured as follows. In Section \ref{sec-2}, we   present  the formulation of   novel  schemes. Section \ref{sec-3} studies the fourth-order uniform error bounds and   long-time near conservations  of the proposed methods. In Section \ref{sec-4}, five numerical experiments are conducted for various one-dimensional and two-dimensional   Dirac equations, numerically demonstrating  the  accuracy and long-time behaviour. The last section is devoted to the conclusions of this paper.

\section{Formulation of the method}\label{sec-2}

Without loss of generality, we restrict this section to the one-dimensional case ($d=1$) of \eqref{equ-1}. The extension to the two-dimensional case, as well as to the four-component form in three dimensions for tensor grids, is straightforward.

\subsection{Transformation of the equation} \label{sec-2.1}
For integer $m>0$, $\Omega=[a,b]$, we denote by $H^{m}(\Omega)$ the standard Sobolev space.
Introduce the Dirac operator \cite{BCY}
\begin{align}\label{equ-8}
Q^{\eps}=-\ii\varepsilon\sigma_{1}\partial_{x}+\sigma_{3}, \ x\in\mathbb{R}.
\end{align}
In the phase space (Fourier domain), $Q^{\eps}$  is diagonalizable and can be decomposed as
\begin{equation}\label{equ-10}
Q^{\eps}=\sqrt{Id-\eps^{2}\Delta}\Pi_{+}^{\eps}-\sqrt{Id-\eps^{2}\Delta}\Pi_{-}^{\eps},
\end{equation}
where $\Delta=\partial_{xx}$ represents the one-dimensional Laplace operator, $Id$ denotes the identity operator, and $\Pi_{\pm}^{\eps}: (H^{m}(\mathbb{R}))^{2}\rightarrow (H^{m}(\mathbb{R}))^{2} $ are projector operators
\begin{align}\label{equ-11}
\Pi_{+}^{\eps}=\frac{1}{2}[Id+(Id-\eps^{2}\Delta)^{-1/2}Q^{\eps}], \ \ \ \ \Pi_{-}^{\eps}=\frac{1}{2}[Id-(Id-\eps^{2}\Delta)^{-1/2}Q^{\eps}].
\end{align}
It can be verified that the projection operators $\Pi_{+}^{\eps}$ and $\Pi_{-}^{\eps}$ satisfy  $\Pi_{+}^{\eps}+\Pi_{-}^{\eps}=Id,$ $\Pi_{+}^{\eps}\Pi_{-}^{\eps}=\Pi_{-}^{\eps}\Pi_{+}^{\eps}=0$ and $(\Pi_{\pm}^{\eps})^{2}=\Pi_{\pm}^{\eps}$. Furthermore, through Taylor expansion, we obtain
\begin{equation}\label{equ-12}
\Pi_{\pm}^{\eps}=\Pi_{\pm}^{0}\pm\eps \mathcal{R}_{1}=\Pi_{\pm}^{0}\mp \ii\frac{\eps}{2}\sigma_{1}\partial_{x}\pm\eps^{2} \mathcal{R}_{2}, \ \ \ 
\Pi_{+}^{0}=\text{diag}(1,0), \ \ \  \Pi_{-}^{0}=\text{diag}(0,1),
\end{equation}
with $\mathcal{R}_{1} : (H^{m}(\mathbb{R}))^{2}\rightarrow (H^{m-1}(\mathbb{R}))^{2}$ for $m\geq 1$ and $\mathcal{R}_{2} : (H^{m}(\mathbb{R}))^{2}\rightarrow (H^{m-2}(\mathbb{R}))^{2}$ for $m\geq 2$ which are uniformly bounded w.r.t $\eps$. Denote
\begin{align}\label{equ-13}
\mathcal{D}^{\eps}=\frac{1}{\eps^{2}}(\sqrt{(Id-\eps^{2}\Delta)}-Id)=-(\sqrt{(Id-\eps^{2}\Delta)}+Id)^{-1}\Delta.
\end{align}
This is an uniformly bounded    mapping from $(H^{m}(\mathbb{R}))^{2}\rightarrow (H^{m-2}(\mathbb{R}))^{2}$ for $m\geq 2$. Moreover, it satisfies the inequality
$0\leq \mathcal{D}^{\eps} \leq -\frac{1}{2}\Delta$ for all $\eps>0$. Consequently, the operator
$e^{\frac{\ii t}{\eps^{2}}Q^{\eps}}$ can be expressed as
\begin{align}\label{equ-2-9-1}
e^{\frac{\ii t}{\eps^{2}}Q^{\eps}}=e^{\frac{\ii t}{\eps^{2}}(\sqrt{Id-\eps^{2}\Delta}\Pi_{+}^{\eps}-\sqrt{Id-\eps^{2}\Delta}\Pi_{-}^{\eps})}
=e^{\frac{\ii t}{\eps^{2}}}e^{\ii t \mathcal{D}^{\eps}}\Pi_{+}^{\eps}+e^{-\frac{\ii t}{\eps^{2}}}e^{-\ii t \mathcal{D}^{\eps}}\Pi_{-}^{\eps}.
\end{align}
Using the operator $Q^{\eps}$,   the NLDE \eqref{equ-1} can be rewritten as
\begin{align}\label{equ-9}
&\partial_{t}\Phi(t,x)=\frac{-\ii Q^{\eps}}{\eps^{2}}\Phi(t,x)-\ii\bigg(V(x)\Phi(t,x)+\mathbf{F}(\Phi(t,x))\Phi(t,x)\bigg), \ x\in\Omega, \\ \nonumber
&\Phi(0,x)=\Phi_{0}(x), \ x\in\overline{\Omega}, \ \Phi(t,a)=\Phi(t,b), \ t\in[0,T],
\end{align}
where $\Omega=(a,b)$ is a bounded domain with periodic boundary conditions.

To filter out the dominant oscillations in   \eqref{equ-9}, we introduce a transformed variable
\begin{align}\label{equ-2-9-4}
\Psi(t,x)=(\psi_{1}(t,x),\psi_{2}(t,x)):=e^{\frac{\ii tQ^{\eps}}{\eps^{2}}}\Phi(t,x), \ x\in\Omega.
\end{align}
Then the system \eqref{equ-9} is transformed into
\begin{equation}\label{equ-2-9-5}
\partial_{t}\Psi(t,x)=e^{\frac{\ii tQ^{\eps}}{\eps^{2}}}g(e^{-\frac{\ii tQ^{\eps}}{\eps^{2}}}\Psi(t,x)),\ \Psi(0,x)=\Phi_{0}(x),\ t\in(0,T],
\end{equation}
where $g(\Phi)=-\ii\big(V(x)\Phi(t,x)+\mathbf{F}(\Phi(t,x))\Phi(t,x)\big)$. By \eqref{equ-2-9-1}, it follows that
\begin{equation}\label{equ-2-19-1}
\partial_{t}\Psi(t,x)=\big(e^{\frac{\ii t}{\eps^{2}}}e^{\ii t \mathcal{D}^{\eps}}\Pi_{+}^{\eps}+e^{-\frac{\ii t}{\eps^{2}}}e^{-\ii t \mathcal{D}^{\eps}}\Pi_{-}^{\eps}\big)
g\big(\big(e^{-\frac{\ii t}{\eps^{2}}}e^{-\ii t \mathcal{D}^{\eps}}\Pi_{+}^{\eps}+e^{\frac{\ii t}{\eps^{2}}}e^{\ii t \mathcal{D}^{\eps}}\Pi_{-}^{\eps}\big)\Psi(t,x) \big),
\end{equation}
where  inserting \eqref{equ-8} into \eqref{equ-11} gives  $$
\Pi_{+}^{\eps}=\frac{1}{2}\left(\begin{array}{cc} 1+\delta & -\ii\eps\partial_{x}\delta \\
-\ii\eps\partial_{x}\delta  & 1-\delta \\ \end{array}\right),\ \
\Pi_{-}^{\eps}=\frac{1}{2}\left(\begin{array}{cc} 1-\delta & \ii\eps\partial_{x}\delta \\
\ii\eps\partial_{x}\delta & 1+\delta \\ \end{array}\right)
$$
with $\delta=(1-\eps^{2}\Delta)^{-\frac{1}{2}}$.

\subsection{Two-scale formulation and its initial value}\label{sec-2.2}
It can be observed that \eqref{equ-2-9-5} involves two distinct time scales, the fast scale $t/\eps^{2}$ and the slow scale $t$. For small $\eps$, the fast scale $\tau$ varies significantly more rapidly than the slow scale $t$. To address this, we derive a two-scale formulation by explicitly separating the fast scale $t/\eps^{2}$   from the slow scale $t$. Specifically, we introduce an augmented system of the Dirac equation, governed by the augmented solution $U(t,\tau,x)$, which is $2\pi$ periodic in $\tau$. Here, $U(t,\tau,x)=(U_{1}(t,\tau,x),U_{2}(t,\tau,x))$ coincides with the original solution $\Psi(t,x)$ when $\tau=t/\eps^{2}$, i.e., $\Psi(t,x)=U\big(t,t/\eps^{2},x\big)$. Set
\begin{align}\label{equ-2-9-6}
G(t,\tau,U)=\big(e^{\ii\tau}e^{\ii t \mathcal{D}^{\eps}}\Pi_{+}^{\eps}+e^{-\ii\tau}e^{-\ii t \mathcal{D}^{\eps}}\Pi_{-}^{\eps}\big)
g\bigg(\big(e^{-\ii\tau}e^{-\ii t \mathcal{D}^{\eps}}\Pi_{+}^{\eps}+e^{\ii\tau}e^{\ii t \mathcal{D}^{\eps}}\Pi_{-}^{\eps}\big)U \bigg).
\end{align}
According to \eqref{equ-2-19-1},   we   obtain the following two-scale equation of NLDE
\begin{equation}\label{equ-TS-1}
\partial_{t}U(t,\tau,x)+\frac{1}{\eps^{2}}\partial_{\tau}U(t,\tau,x)=G(t,\tau,U(t,\tau,x)), \ t\in[0,T], \ x\in\Omega, \ \tau\in\mathbb{T}_{\tau},
\end{equation}
where $\mathbb{T}_{\tau}:=\mathbb{R}/(2\pi\mathbb{Z})$ denotes the torus. It is noted that the initial value only needs to satisfy  $U(0,0,x)=\Phi_{0}(x)$ and for  $U(0,\tau,x)$, it can be designed as follows.

For a periodic function $v(\cdot)$ on $\bT_{\tau}$, we define the averaging, differentiation and its inversion operators as
 $$\Pi v:=\frac{1}{2\pi}\int_0^{2\pi}v(\tau)d\tau, \quad
 L:=\partial_\tau, \quad
 L^{-1}v:=(I-\Pi)\int_0^\tau v(s)ds,\quad
  A:=L ^{-1} (I-\Pi).$$
The  solution $U(t,\tau,x)$ of \eqref{equ-TS-1} is expressed through the Chapman-Enskog expansion \cite{CCLM,Zhao}
 \begin{equation}\label{equ-2-9-7}
U(t,\tau,x)=\underline{U}(t,x)+h(t,\tau,x),
\end{equation}
where $\underline{U}(t,x):=\Pi U(t,\tau,x)$ is the averaged part and $h(t,\tau,x)$ is the correction part which satisfies
$ \Pi h(t,\tau,x)=0.$ Acting the operator $\Pi $ to both sides of \eqref{equ-TS-1}, we derive the following result
\begin{equation}\label{equ-2-9-8}
\partial_{t} \underline{U}(t, x)=\Pi G(t,\tau,\underline{U}(t,x)+h(t,\tau,x)).
\end{equation}
Subtracting the above from \eqref{equ-TS-1} leads to
\begin{equation}\label{equ-2-9-9}
 \partial_{t}h(t,\tau,x)+\frac{1}{\eps^{2}}\partial_{\tau} h(t,\tau,x)
=(I- \Pi) G(t,\tau,\underline{U}(t,x)+h(t,\tau,x)),
\end{equation}
which by inverting $L=\partial_\tau$ can be rewritten as
\begin{equation}\label{equ-2-9-10}
h(t,\tau,x)=\eps^{2}\big( A G(t,\tau,\underline{U}(t,x)+h(t,\tau,x))- L^{-1}\left(\partial_{t} h(t,\tau,x)\right)\big).
\end{equation}
From \eqref{equ-2-9-10}, it is evident that $h=\mathcal{O}(\eps^{2})$ holds formally, motivating the consideration of the following asymptotic expansion:
 \begin{equation}\label{equ-2-9-11}
h(t,\tau,x)=\eps^{2} h_{1}(t,\tau,\underline{U}(t,x))+\eps^{4} h_{2}(t,\tau,\underline{U}(t,x))+\eps^{6} h_{3} (t,\tau,\underline{U}(t,x))+\mathcal{O}(\eps^{8}),
\end{equation}
with some $h_{1}, h_{2}, h_{3}=\mathcal{O}(1)$ to be determined.

Substituting \eqref{equ-2-9-11} into \eqref{equ-2-9-10} and expanding $G(t,\tau,\cdot)$ as a Taylor series around $\underline{U}$, we have
\begin{equation}\label{equ-2-9-12}
\begin{aligned}
\eps^{2} h_{1}&+\eps^{4} h_{2}+\eps^{6} h_{3}+\mathcal{O}(\eps^{8})=
\eps^{2} A G(t,\tau,\underline{U})+\eps^{2} A \partial_{u} G(t,\tau,\underline{U})(\eps^{2} h_{1}+\eps^{4} h_{2}) \\
&+\frac{1}{2}\eps^{2} A \partial_{u}^{2} G(t,\tau,\underline{U})\big(\eps^{2} h_{1},\eps^{2} h_{1}\big)-\eps^{2} L^{-1}(\eps^{2}  \partial_{t}h_{1}+\eps^{4} \partial_{t} h_{2})+\mathcal{O}(\eps^{8}).
\end{aligned}
\end{equation}
By comparing the coefficients of $\eps^{2j}$ for  $j=1,2,3$,  we derive the following relationships:
\begin{equation}\label{equ-2-9-14}
\begin{aligned}
h_{1}(t,\tau,\underline{U}):=&AG(t,\tau,\underline{U}), \\
h_{2}(t,\tau,\underline{U}):=&A\partial_{u}G(t,\tau,\underline{U})AG(t,\tau,\underline{U})-A^{2}\partial_{t}G(t,\tau,\underline{U})-A^{2}\partial_{u}G(t,\tau,\underline{U})
\Pi G(t,\tau,\underline{U}), \\
h_{3}(t,\tau,\underline{U}):=&A\partial_{u}G(t,\tau,\underline{U})h_{2}+\frac{1}{2}A\partial_{u}^{2}G(t,\tau,\underline{U})(h_{1},h_{1})-A^{2}\partial_{ut}G(t,\tau,\underline{U})
AG(t,\tau,\underline{U}) \\
&-A^{2}\partial_{u}^{2}G(t,\tau,\underline{U})(\Pi G(t,\tau,\underline{U}),AG(t,\tau,\underline{U}))-A^{2}\partial_{u}G(t,\tau,\underline{U})A\partial_{t}G(t,\tau,\underline{U}) \\
&-A^{2}\partial_{u}G(t,\tau,\underline{U})A\partial_{u}G(t,\tau,\underline{U})\Pi G(t,\tau,\underline{U}) \\
&-A^{2}\partial_{u}G(t,\tau,\underline{U})(\Pi\partial_{u}G(t,\tau,\underline{U}),AG(t,\tau,\underline{U}))+A^{3}\partial_{t}^{2}G(t,\tau,\underline{U}) \\
&+2A^{3}\partial_{tu}G(t,\tau,\underline{U})\Pi G(t,\tau,\underline{U})+A^{3}\partial_{u}^{2}G(t,\tau,\underline{U})(\Pi G(t,\tau,\underline{U}), \Pi G(t,\tau,\underline{U})) \\
&+A^{3}\partial_{u}G(t,\tau,\underline{U})\Pi\partial_{t}G(t,\tau,\underline{U})+A^{3}\partial_{u}G(t,\tau,\underline{U})\Pi\partial_{u}G(t,\tau,\underline{U})
\Pi G(t,\tau,\underline{U}).
\end{aligned}
\end{equation}
Since $\Pi$  and $ A$ are bounded on $C^0(\mathbb{T}_{\tau};H^{\sigma})$ for any $\sigma$,
it follows  that $h_{1}, h_{2}, h_{3}=\mathcal{O}(1)$ under suitable Sobolev norm, and this ensures the consistency of the ansatz \eqref{equ-2-9-11} and the derivation process.

We now proceed to derive the complete initial value  $U(0,\tau,x)$. Utilizing the Chapman-Enskog expansion \eqref{equ-2-9-7} and \eqref{equ-2-9-11} at the initial time, we obtain that
$$U(0,\tau,x)=\underline{U}(0)+\eps^{2} h_{1}(0,\tau,\underline{U}(0))+\eps^{4} h_{2}(0,\tau,\underline{U}(0))+\eps^{6} h_{3}(0,\tau,\underline{U}(0))+\mathcal{O}(\eps^{8}),$$
the initial data $\underline{U}(0)$ will be determined such that it is consistent with the asymptotic order established above.
By the initial restriction
$\Phi_{0}=U(0,0,x),$
the proper $\underline{U}(0)$ can be determined via an iterative procedure:
\begin{equation}\label{equ-2-10-1}
\underline{U}^{[0]}:=\Phi_{0},\ \underline{U}^{[r]}=\Phi_{0}-\sum_{l=1}^r\eps^{2(r-l+1)} h_{l}(0,0,\underline{U}^{[r-l]}),\ r=1,2,3,\  \ \mbox{where}\
\underline{U}^{[r]}-\underline{U}(0)=\mathcal{O}(\eps^{2(r+1)}).
\end{equation}
Therefore, the complete set of initial conditions for the two-scale equation \eqref{equ-TS-1} is formulated as follows:
\begin{equation}\label{init-1}
U(0,\tau,x)=\underline{U}^{[3]}+\eps^{2} h_{1}(0,\tau,\underline{U}^{[2]})+\eps^{4} h_{2}(0,\tau,\underline{U}^{[1]})+\eps^{6} h_{3}(0,\tau,\underline{U}^{[0]})=:U_0(\tau,x),\quad
\tau\in\bT_\tau.
\end{equation}


\subsection{Full discretization}\label{sec-2.3}
In this part, we present the full discretization.
To begin, we employ the Fourier spectral method in the spatial $x$ direction (as described in \cite{STW}) for solving \eqref{equ-2-19-1}.
Choose the mesh size $h:=\triangle x=(b-a)/N_{x}$ with $N_{x}$ a positive even integer, and denote grid points as $x_{j}:=a+jh$ for $j=0,1,\ldots,N_{x}$. Specially, the first and second order Fourier differential matrices are introduced as
\begin{equation}\label{equ-2-19-2}
\begin{aligned}
&A_{kj}^{(1)}:=\left\{
\begin{array}{ll}
\frac{(-1)^{k+j}}{2}\cot\big(\frac{(k-j)\pi}{N_{x}}\big),  & k\neq j, \\
0,  & k=j,
\end{array}
\right. \ \
A_{kj}^{(2)}:=\left\{
\begin{array}{ll}
\frac{(-1)^{k+j}}{-2}\sin^{-2}\big(\frac{(k-j)\pi}{N_{x}}\big),   & k\neq j, \\
\frac{N_{x}^{2}}{-12}-\frac{1}{6},   & k=j,
\end{array}
\right.
\end{aligned}
\end{equation}
with $k, j=0,\ldots, N_{x}-1$. Thence the operators $\mathcal{D}^{\eps}$ and $\Pi_{\pm}$ are replaced  by
\begin{align*}
&\mathcal{D}^{\eps}_{f}=\frac{1}{\eps^{2}}\textmd{diag} \Big((I-\eps^{2}A_{kj}^{(2)})^{1/2}-I,(I-\eps^{2}A_{kj}^{(2)})^{1/2}-I\Big),\\
&\Pi_{\pm,f}^{\eps}=\frac{1}{2}\left(\begin{array}{cc} I\pm(I-\eps^{2}A_{kj}^{(2)})^{-1/2} & \mp \ii\eps A_{kj}^{(1)}(I-\eps^{2}A_{kj}^{(2)})^{-1/2} \\
\mp \ii\eps A_{kj}^{(1)}(I-\eps^{2}A_{kj}^{(2)})^{-1/2}  & I\mp(I-\eps^{2}A_{kj}^{(2)})^{-1/2} \\ \end{array}\right),
\end{align*}
where $I=I_{N_{x}\times N_{x}}$ is $N_{x}\times N_{x}$ identity matrix, $\mathcal{D}^{\eps}_{f}$ and $\Pi_{\pm,f}^{\eps}$ are $2\times 2$  matrix blocks and each matrix block is an $N_{x}\times N_{x}$ matrix.
Then the system \eqref{equ-2-19-1} can be discretized into the following ODEs:
\begin{equation}\label{equ-2-20-1}
\frac{\textmd{d}}{\textmd{d}t}W(t)=\big(e^{\frac{\ii t}{\eps^{2}}}e^{\ii t \mathcal{D}^{\eps}_{f}}\Pi_{+,f}^{\eps}+e^{-\frac{\ii t}{\eps^{2}}}e^{-\ii t \mathcal{D}^{\eps}_{f}}\Pi_{-,f}^{\eps}\big)
\Lambda\big(\big(e^{-\frac{\ii t}{\eps^{2}}}e^{-\ii t \mathcal{D}^{\eps}_{f}}\Pi_{+,f}^{\eps}+e^{\frac{\ii t}{\eps^{2}}}e^{\ii t \mathcal{D}^{\eps}_{f}}\Pi_{-,f}^{\eps}\big)W(t) \big),
\end{equation}
where $W(t)=[\psi_{1}(x_{1},t),\ldots,\psi_{1}(x_{N_{x}},t),\psi_{2}(x_{1},t),\ldots,\psi_{2}(x_{N_{x}},t)]^{\intercal}$, $\Lambda(\Phi_{f})=-\ii\big(V_{f}+\mathbf{F}(\Phi_{f}(t))\big)\Phi_{f}(t)$, and $V_f$ denotes the discrete set of potential values ${V(x_j)}_{j=0,1,\ldots,N_{x}-1}$ along the spatial grids.

 For the system \eqref{equ-2-20-1}, according to Subsection \ref{sec-2.2}, its two-scale form  can be expressed as
\begin{equation}\label{equ-2-21-1}
\partial_{t}Z(t,\tau)+\frac{1}{\eps^{2}}\partial_{\tau}Z(t,\tau)=\Upsilon(t,\tau,Z(t,\tau)),
\end{equation}
with
$$
\Upsilon(t,\tau,Z(t,\tau))=\big(e^{\ii\tau}e^{\ii t \mathcal{D}^{\eps}_{f}}\Pi_{+,f}^{\eps}+e^{-\ii\tau}e^{-\ii t \mathcal{D}^{\eps}_{f}}\Pi_{-,f}^{\eps}\big)
\Lambda\big(\big(e^{-\ii\tau}e^{-\ii t \mathcal{D}^{\eps}_{f}}\Pi_{+,f}^{\eps}+e^{\ii\tau}e^{\ii t \mathcal{D}^{\eps}_{f}}\Pi_{-,f}^{\eps}\big)Z(t,\tau) \big).
$$
When $\tau=t/\eps^{2}$, it is obtained that  $Z(t,\tau)=W(t)$.
Let $N_{\tau}$ be an even positive integer  and define $\triangle\tau=\frac{2\pi}{N_{\tau}}$, $\tau_{k}=\frac{2\pi}{N_{\tau}}k$ with $k=0,1,\ldots, N_{\tau}-1$. For any periodic function $v(\tau)$ on $[0,2\pi)$, denote by $P_{\mathcal{M}} : L^{2}([0,2\pi))\rightarrow Y_{\mathcal{M}}$ the standard projection operator
$(P_{\mathcal{M}}v)(\tau)=\sum\limits_{l\in\mathcal{M}}\widehat{v_{l}}e^{\ii l\tau}$ with $l\in \mathcal{M} :=\left\{ -\frac{N_{\tau}}{2}, -\frac{N_{\tau}}{2}+1,\ldots, \frac{N_{\tau}}{2}-1 \right\}$ and by  $I_{\mathcal{M}} : C([0,2\pi))\rightarrow Y_{\mathcal{M}}$ the trigonometric interpolation operator
$(I_{\mathcal{M}}v)(\tau)=\sum\limits_{l\in\mathcal{M}}\widetilde{v_{l}}e^{\ii l\tau}$,
where $\widehat{v_{l}}$ and $\widetilde{v_{l}}$ are respectively defined as
$
\widehat{v}_{l}=\frac{1}{2\pi}\int\limits_{0}^{2\pi}v(\tau)\fe^{-\ii l\tau} d\tau$ and $ \widetilde{v}_{l}=\frac{1}{N_{\tau}}\sum\limits_{k=0}^{N_{\tau}-1}v(\tau_{k})\fe^{-\ii l\tau_{k}}.
$
Then the Fourier spectral method is given in the $\tau$ direction by finding the trigonometric polynomials
$
Z^{\mathcal{M}}(t,\tau)=P_{\mathcal{M}} Z(t,\tau) =\left(Z_{1,j}^{\mathcal{M}}(t,\tau),Z_{2,j}^{\mathcal{M}}(t,\tau)\right)_{j=1,2,\ldots,N_{x}}
$
such that
\begin{align}\label{equ-2-21-4}
\partial_{t}Z^{\mathcal{M}}(t,\tau)+\frac{1}{\eps^{2}}\partial_{\tau}Z^{\mathcal{M}}(t,\tau)=\Upsilon(t,\tau,Z^{\mathcal{M}}(t,\tau)).
\end{align}
Consider  the notation $\widehat{\mathbf{Z}}=[\widehat{\mathbf{Z}}_{1}; \widehat{\mathbf{Z}}_{2}]$ with
$\widehat{\mathbf{Z}}_{m}:=(\widehat{Z}^l_{m,j})_{j=1,2,\ldots,N_{x},l\in\mathcal{M}}$ for  $m=1,2$,
where $(\widehat{Z}^l_{m,j})_{l\in\mathcal{M}}$ are  the Fourier coefficients of $Z^{\mathcal{M}}_{m,j}$.
Then, we obtain
\begin{equation}\label{equ-2-21-2}
\frac{\textmd{d}}{\textmd{d}t}\widehat{\mathbf{Z}}(t)=\ii\Theta\widehat{\mathbf{Z}}(t)+\Xi(t,\widehat{\mathbf{Z}}(t)),
\end{equation}
  where $\widehat{\mathbf{Z}}$ is a $2N_{x}\times N_{\tau}$  dimensional vector,  $\Theta :=\mbox{diag}(\Theta_{1},\Theta_{2},\ldots,\Theta_{2N_{x}})$ with
$\Theta_{1}=\Theta_{2}=\ldots=\Theta_{2N_{x}} :=\frac{1}{\eps^{2}}\mbox{diag}\left(\frac{N_{\tau}}{2},\frac{N_{\tau}}{2}-1,\ldots,-\frac{N_{\tau}}{2}+1 \right)$  and
\begin{equation}\label{equ-2-21-3}
\Xi(t,\widehat{\mathbf{Z}})=\mathfrak{F}\left((-\ii)\mathcal{C}_{-}\left(V_f+F(\mathcal{C}_{+}
\mathfrak{F}^{-1}\widehat{\mathbf{Z}})\right)(\mathcal{C}_{+}\mathfrak{F}^{-1}\widehat{\mathbf{Z}}) \right).
\end{equation}
Here $\mathfrak{F}$ denotes the discrete Fast Fourier Transform (FFT), and
\begin{align*}
&\mathcal{C}_{\pm}:=\mbox{diag}\left(e^{\mp\ii\tau_{k}}e^{\mp\ii t \mathcal{D}^{\eps}_{f}}\Pi_{+,f}^{\eps}+e^{\pm\ii\tau_{k}}e^{\pm\ii t \mathcal{D}^{\eps}_{f}}\Pi_{-,f}^{\eps} \right)_{k=0,1,\ldots,N_{\tau}-1}.
\end{align*}

Based on the Duhamel formula for \eqref{equ-2-21-2}
\begin{equation}\label{equ-2-10-2}
\widehat{\mathbf{Z}}(t_{n}+\xi)=\fe^{\xi \ii\Theta}\widehat{\mathbf{Z}}(t_{n})+\int_{0}^{\xi}\fe^{(\xi-\theta)\ii\Theta}
\Xi(t_{n}+\theta,\widehat{\mathbf{Z}}(t_{n}+\theta))d\theta,\quad 0\leq \xi\leq \triangle t,\quad n\geq0,
\end{equation}
  we adopt the framework of the $s$-stage exponential integrator (\cite{HO06,HO10}):
\begin{equation}\label{equ-2-10-3}
\begin{aligned}
&\widehat{\mathbf{Z}}^{n,j}=\fe^{c_{j}\ii\Theta\triangle t}\widehat{\mathbf{Z}}^{n}+\triangle t
\sum_{k=1}^{s} a_{jk} (\ii\Theta\triangle t)\Xi(t_{n}+c_{k}\triangle t,\widehat{Q}^{n,k}),\quad j=1,2,\ldots,s,\\
&\widehat{\mathbf{Z}}^{n+1}=\fe^{\ii\Theta\triangle t}\widehat{\mathbf{Z}}^{n}+\triangle t
\sum_{j=1}^{s} b_{j} (\ii\Theta\triangle t)\Xi(t_{n}+c_{j}\triangle t,\widehat{Q}^{n,j}),\quad n\geq0,
\end{aligned}
\end{equation}
where $\triangle t$ is a time stepsize, $c_j\in[0,1]$ and $a_{jk}(z), b_j(z)$ are bounded coefficient functions of $z\in \mathbb{C}$ for  $j,k=1,2,\ldots,s$ and $s\in\mathbb{N}_+$.
If the coefficients satisfy the following conditions:
\begin{equation}\label{equ-2-10-5}
\begin{array}[c]{ll}
c_{\rho}=1-c_{s+1-\rho},\ \ b_{\rho}(z)=\fe^{z}b_{s+1-\rho}(-z),\ \
a_{j\rho}(z)=\fe^{c_{j}z}b_{s+1-\rho}(-z)-a_{s+1-j,s+1-\rho}(-z),
\end{array}
\end{equation}
where  $\rho,j=1,2,\ldots,s$, then the integrator \eqref{equ-2-10-3} is time symmetric.

Based on these symmetry conditions and the stiff order conditions of exponential integrators  (\cite{HO06,HO10}),  we construct the following two numerical schemes, which will  be proved to have fourth-order accuracy in the next section.

$\bullet$ \textbf{Symmetric scheme}
We consider $s=3$ and choose the following coefficients
  \begin{equation}\label{sori4 coe}
\begin{aligned} &a_{31}=a_{32}=a_{33}=0,\ \
a_{21}=-\frac{1}{4} \varphi_{2,2} + \frac{1}{2}\varphi_{3,2},\ \ a_{22}=\varphi_{2,2} -\varphi_{3,2},  \ \ a_{23}=\frac{1}{2}\varphi_{1,2}-\frac{3}{4}\varphi_{2,2} +\frac{1}{2}\varphi_{3,2},\\
&a_{11}=b_{1}= 4\varphi_3  - \varphi_2 ,\ \
 a_{12}=b_{2}=4\varphi_2 -8\varphi_3,\ \ a_{13}=b_{3}=\varphi_1-3\varphi_2 +4\varphi_3,
\end{aligned}
\end{equation}
where $\varphi_{i,j}=\varphi_{i,j}(\ii\Theta\triangle t)=\varphi_{i}(c_{j}\ii\Theta\triangle t)$ with  $\varphi_{\rho}(z):=\int_0^1
\theta^{\rho-1}\frac{\fe^{(1-\theta)z}}{(\rho-1)!}d\theta.$ This scheme is symmetric and we denote it by SEP-TS4.

$\bullet$ \textbf{Explicit scheme}  Now we consider explicit schemes  and the
coefficients are given as
\begin{equation} \label{eori4 coe}
\begin{aligned}
&c_{1}=0,  \ c_{2}=c_{3}=c_{5}=\frac{1}{2}, \  c_{4}=1, \ a_{2,1}=\frac{1}{2}\varphi_{1,2}, \  a_{3,1}=\frac{1}{2}\varphi_{1,3}-\varphi_{2,3}, \   a_{3,2}=\varphi_{2,3}, \\
&a_{4,1}=\varphi_{1,4}-2\varphi_{2,4}, \ \qquad  a_{4,2}=a_{4,3}=\varphi_{2,4}, \ \ \qquad  a_{5,1}=\frac{1}{2}\varphi_{1,5}-2a_{5,2}-a_{5,4}, \\
&a_{5,2}=\frac{1}{2}\varphi_{2,5}-\varphi_{3,4}+\frac{1}{2}\varphi_{2,4}-\frac{1}{2}\varphi_{3,5}, \ \ a_{5,3}=a_{5,2}, \ \ a_{5,4}=\frac{1}{4}\varphi_{2,5}-\varphi_{5,2}, \\
&b_{1}=\varphi_{1}-3\varphi_{2}+4\varphi_{3}, \ \ \ b_{2}=b_{3}=0, \ \ \ b_{4}=-\varphi_{2}+4\varphi_{3}, \ \ \ b_{5}=4\varphi_{2}-8\varphi_{3}.
\end{aligned}
\end{equation}
This scheme is not symmetric but is explicit. It is referred as EEP-TS4.

Based on the preceding derivations, we are now prepared to introduce the fully discrete  integrators for solving the original Dirac equation \eqref{equ-1}.

\begin{definition}\label{ts-code}(\textbf{Fully discrete  integrators})
For the nonlinear Dirac equation \eqref{equ-1} in the nonrelativistic regime, choosing the time step size $\triangle t$, positive even number $N_{x}$ and $N_{\tau}$, then the SEP-TS4 $\&$ EEP-TS4 integrators are defined as follows.
\begin{itemize}
  \item Derive the fourth order  initial data $Z(0,\tau)$ by inputting $\Phi_{0}$ into \eqref{init-1} and then set
  $\widehat{\mathbf{Z}}^{0}=[\widehat{\mathbf{Z}}_{1}^{0}; \widehat{\mathbf{Z}}_{2}^{0}]:=\textmd{FFT}\Big(\big(Z^{0}(\tau_k)\big)_{k=0,1,\ldots,N_{\tau}-1}\Big)$.

  \item Compute $\widehat{\mathbf{Z}}^{n+1}$ using EP-TS4 schemes \eqref{equ-2-10-3} with coefficients \eqref{sori4 coe} or \eqref{eori4 coe}, for $n=0,1,\ldots,T/\triangle t-1$.

  \item Then the nemerical solution $\Phi^{n+1}\approx \Phi(t_{n+1},x)$ of the NLDE \eqref{equ-1} is formulated as
\begin{align}\label{equ-3-10-1}
\Phi^{n+1}=\fe^{-\frac{\textmd{i} t_{n+1}}{\eps^{2}}Q^{\eps}}Z^{n+1}, \ n=0,1,\ldots,T/\triangle t-1,
\end{align}
where $Z^{n+1}$ is obtained by $Z^{n+1}=\sum\limits_{l\in\mathcal{M}}(\widehat{\mathbf{Z}}^{n+1})_{l}\fe^{\textmd{i} lt_{n+1}/\eps^{2}}$.
\end{itemize}
 The details  of the whole procedure   are given in  \textbf{Algorithm 2.1}.
\end{definition}

\floatname{algorithm}{Algorithm}
\renewcommand{\algorithmicrequire}{\textbf{Input:}}  
\renewcommand{\algorithmicensure}{\textbf{Output:}} 
\begin{algorithm}[t!]
\caption{ Pseudo-code of fully discrete integrators for the NLDE \eqref{equ-1} }
\label{algorithmofORI}
\begin{algorithmic}[1]
 \Require Initial value $\Phi_{0}(x)$; Time stepsize $\triangle t$; End time $T$.
      \Ensure Numerical solution $\Phi^{n+1}(x)\approx \Phi((n+1)\triangle t,x), n=0:\frac{T}{\triangle t}-1$.
\State {$\underline{U}^{[0]}(x_{j})=\Phi_{0}(x_{j})_{j=0,1,\ldots,N_{x}-1}$}
\For{$r=1:3$}
\State{$\underline{U}^{[r]}=\Phi_{0}-\sum_{l=1}^r\eps^{2(r-l+1)} h_{l}(0,0,\underline{U}^{[r-l]})$.}
\EndFor
 \State {$U(0,\tau,x_{j})=\underline{U}^{[3]}+\eps^{2} h_{1}(0,\tau,\underline{U}^{[2]})+\eps^{4} h_{2}(0,\tau,\underline{U}^{[1]})+\eps^{6} h_{3}(0,\tau,\underline{U}^{[0]}).$}
 \State{$Z^{0}(\tau)=(U_0(\tau,x_{j}))_{j=0,1,\ldots,N_{x}-1}$ $\%$ the initial value of the equation \eqref{equ-2-21-1}}
 \State{$\widehat{\mathbf{Z}}^{0}:=\textmd{FFT}\Big(\big(Z^{0}(\tau_k)\big)_{k=0,1,\ldots,N_{\tau}-1}\Big)$ $\%$ the initial value of the equation \eqref{equ-2-21-2}}
 \For{$n=0:\frac{T}{\triangle t}-1$}
 \For{$j=1:s (s=3, 5)$}
  \State{$\widehat{\mathbf{Z}}^{n,j}=\fe^{c_{j}\ii\Theta\triangle t}\widehat{\mathbf{Z}}^{n}+\triangle t
\sum_{k=1}^{s} a_{jk} (\ii\Theta\triangle t)\Xi(t_{n}+c_{k}\triangle t,\widehat{Q}^{n,k}).$}
 \EndFor
 \State{$\widehat{\mathbf{Z}}^{n+1}=\fe^{\ii\Theta\triangle t}\widehat{\mathbf{Z}}^{n}+\triangle t
\sum_{j=1}^{s} b_{j} (\ii\Theta\triangle t)\Xi(t_{n}+c_{j}\triangle t,\widehat{Q}^{n,j}).$}
 \State{$\Phi^{n+1}=\fe^{\frac{-\ii(n+1)\triangle t}{\eps^{2}}Q^{\eps}}Z^{n+1}((n+1)\triangle t/\eps^{2}).$}
\EndFor
\end{algorithmic}
\end{algorithm}

In practice, the integrals appeared   in the computing   Fourier transform coefficients  are not suitable. Consequently, the  scheme \eqref{equ-2-10-3}  is typically replaced by interpolation schemes in practical computations. Choosing $\mathbf{Z}^{0}=\big(Z^{0}(\tau_k)\big)_{k=0,1,\ldots,N_{\tau}-1}$, then the Fourier pseudospectral (FP) discretization  reads
\begin{align}\label{equ-3-14-1}
Z^{n+1}(\tau)=\sum\limits_{l\in \mathcal{M}}(\widetilde{\mathbf{Z}}^{n+1})_{l}\fe^{\ii l\tau},
\end{align}
where
\begin{equation}\label{equ-3-14-2}
\begin{aligned}
&\widetilde{\mathbf{Z}}^{n,j}=\fe^{c_{j}\ii\Theta\triangle t}\widetilde{\mathbf{Z}}^{n}+\triangle t
\sum_{k=1}^{s} a_{jk} (\ii\Theta\triangle t)\Xi(t_{n}+c_{k}\triangle t,\widetilde{\mathbf{Z}}^{n,k}),\quad j=1,2,\ldots,s,\\
&\widetilde{\mathbf{Z}}^{n+1}=\fe^{\ii\Theta\triangle t}\widetilde{\mathbf{Z}}^{n}+\triangle t
\sum_{j=1}^{s} b_{j} (\ii\Theta\triangle t)\Xi(t_{n}+c_{j}\triangle t,\widetilde{\mathbf{Z}}^{n,j}),\quad n\geq0,
\end{aligned}
\end{equation}
with the coefficients \eqref{sori4 coe} or \eqref{eori4 coe}.

\section{Uniform accuracy and long-time analysis}\label{sec-3}
In this section, we shall study the  uniform accuracy and long-time behaviour of the proposed  integrators.

\subsection{Error estimates}
In order to obtain the error bounds for the  integrators \eqref{equ-2-10-3}, we assume that the electric potential and the exact solution to the NLDE \eqref{equ-9} satisfies:
$$ \norm{V}_{W_{p}^{\nu+m_{0},\infty}}\lesssim 1,\ \
  \norm{\Phi}_{L^{\infty}([0,T];(H_{p}^{\nu+m_{0}})^{2})}\lesssim 1,
$$
where $\nu>d/2+8$, $m_{0}\geq 1$, and $W_{p}^{m,\infty}=\{u| u\in W^{m,\infty}(\Omega),\partial_{x}^{l}u(a)=\partial_{x}^{l}u(b), l=0, \ldots, m-1\}$, $H_{p}^{m}(\Omega)=\{u|u\in H^{m}(\Omega),\partial_{x}^{l}u(a)=\partial_{x}^{l}u(b), l=0, \ldots, m-1\}$ for $m\in\mathbb{N}$.
For simplicity, in this section we use  $A\lesssim B$ to denote that
there exists a generic constant $C>0$ independent of $\eps, \triangle t,\triangle x, \triangle \tau$ such that $A\leq C B$.
\begin{theorem}\label{theo-1} (\textbf{Uniform accuracy})
Under the assumptions stated above, there exist $t_{0}>0$, $h_{0}>0$ and $\tau_{0}>0$ independent of $\eps$ such that for any $0< \eps \leq 1$, when $0<\triangle t\leq t_{0}$, $0<\triangle x\leq h_{0}$ and $0<\triangle \tau\leq \tau_{0}$ ,the global errors
$e_{\Phi}^{n}:=\Phi(t_{n},\cdot)-\Phi^n$  of fully discrete  integrators are bounded by
\begin{equation}\label{equ-2-10-6}
\begin{aligned}
 &\norm{\fe_{\Phi}^{n}}_{H^{1}}  \lesssim \triangle t^4 +\triangle x^{m_{0}}+ \triangle\tau^{\nu-1} ,\qquad \   n=0,1,\ldots, T/\triangle t,
\end{aligned}
\end{equation}
where $\nu>8+d/2$ and $m_{0}\geq 1$.
\end{theorem}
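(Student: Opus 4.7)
\medskip

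\noindent\textbf{Proof plan for Theorem \ref{theo-1}.}

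The plan is to decompose the global error $e_\Phi^n$ into contributions from each layer of the construction and to bound each contribution separately, exploiting the fact that the augmented two-scale solution $U(t,\tau,x)$ is smooth in $(t,\tau)$ \emph{uniformly} in $\eps$. Since the map $\fe^{-\ii t_n Q^\eps/\eps^2}$ is an $L^2$--isometry and $Q^\eps$ is bounded on $H^1$ with norm independent of $\eps$, it suffices to bound $\Psi(t_n,\cdot)-Z^n(t_n/\eps^2,\cdot)$ in $H^1$. I would split this as
\begin{equation*}
\Psi(t_n,\cdot)-Z^n(t_n/\eps^2,\cdot)
=\underbrace{\Psi(t_n,\cdot)-U(t_n,t_n/\eps^2,\cdot)}_{\mathcal{E}_{\mathrm{init}}}
+\underbrace{U(t_n,\cdot,\cdot)-Z^{\mathcal{M}}(t_n,\cdot)}_{\mathcal{E}_{\tau}+\mathcal{E}_{x}}
+\underbrace{Z^{\mathcal{M}}(t_n,\cdot)-Z^n(\cdot,\cdot)}_{\mathcal{E}_{\mathrm{time}}},
\end{equation*}
evaluated at $\tau=t_n/\eps^2$ where appropriate.

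The first step is to establish that the Chapman--Enskog ansatz \eqref{equ-2-9-11} with the iteratively constructed initial data \eqref{init-1} produces an augmented $U$ whose trace $U(t,t/\eps^2,x)$ differs from $\Psi(t,x)$ by $\mathcal{O}(\eps^8)$ and whose $(t,\tau)$ derivatives up to order $\nu$ are bounded uniformly in $\eps$ in an appropriate Sobolev norm. The boundedness of $\Pi, A, L^{-1}$ on $C^0(\bT_\tau;H^\sigma)$ and of $\mathcal{D}^\eps, \Pi^\eps_\pm$ on $H^\sigma$ uniformly in $\eps$, combined with the regularity assumption $\Phi\in L^\infty([0,T];H^{\nu+m_0})$ and a standard bootstrap on \eqref{equ-2-9-8}--\eqref{equ-2-9-10}, yields these estimates and hence controls $\mathcal{E}_{\mathrm{init}}$ by $\mathcal{O}(\eps^8)\ll \triangle t^4$.

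Next I would handle the two Fourier discretizations. For the spatial discretization, standard trigonometric interpolation error bounds applied to $U(t,\tau,\cdot)\in H^{\nu+m_0}_p$ give $\mathcal{E}_x=\mathcal{O}(\triangle x^{m_0})$ in $H^1$, uniformly in $\tau$ and $\eps$. For the $\tau$ direction, since $U(t,\cdot,x)$ is $2\pi$-periodic and $\nu$ times continuously differentiable in $\tau$ with $\eps$-uniform bounds, the projection $P_\mathcal{M}$ and the trigonometric interpolant $I_\mathcal{M}$ satisfy $\|U-P_\mathcal{M} U\|_{H^1}+\|U-I_\mathcal{M} U\|_{H^1}\lesssim \triangle\tau^{\nu-1}$, which dominates $\mathcal{E}_\tau$.

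The main obstacle, and the last step, is the $\mathcal{O}(\triangle t^4)$ bound for $\mathcal{E}_{\mathrm{time}}$. The semi-discrete system \eqref{equ-2-21-2} has the form $\tfrac{d}{dt}\widehat{\mathbf{Z}}=\ii\Theta\widehat{\mathbf{Z}}+\Xi(t,\widehat{\mathbf{Z}})$ with stiff linear part $\ii\Theta$ and a nonlinearity $\Xi$ whose time-derivatives (and hence those of the Duhamel remainders in \eqref{equ-2-10-2}) are uniformly bounded in $\eps$, precisely because the oscillation has been factored out into $\Theta$. This is the key structural point that makes standard exponential integrator theory applicable: the classical stiff order conditions from \cite{HO06,HO10} yield local truncation error $\mathcal{O}(\triangle t^5)$ for both \eqref{sori4 coe} and \eqref{eori4 coe} provided the coefficient relations for stiff order four hold, which I would verify by direct substitution into the order conditions using $\varphi_\rho$-calculus. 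Combining this with a discrete Gronwall argument based on Lipschitz stability of $\Xi$ (uniform in $\eps$ because the conjugation operators $\mathcal{C}_\pm$ are isometries on the Fourier grid) propagates the local error to the global bound $\mathcal{E}_{\mathrm{time}}=\mathcal{O}(\triangle t^4)$. Summing the four contributions and transforming back through \eqref{equ-3-10-1} gives the stated bound.
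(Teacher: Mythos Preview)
Your overall decomposition and the handling of the spatial, $\tau$-directional, and time-integration errors are essentially the paper's approach: split off the spatial Fourier error $\|\Psi(t_n)-W(t_n)\|$, then the $\tau$-projection error, and finally verify the stiff order-four conditions for the exponential integrator so that the local truncation error is $\mathcal{O}(\triangle t^5)$ with constants coming from the uniform derivative bounds \eqref{der bound}. That part is fine.

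There is, however, a genuine gap in your treatment of $\mathcal{E}_{\mathrm{init}}$. You assert that the trace $U(t,t/\eps^2,x)$ differs from $\Psi(t,x)$ by $\mathcal{O}(\eps^8)$ and then claim $\eps^8\ll\triangle t^4$. The second claim is simply false for $\eps$ near~$1$ (take $\eps=1$, $\triangle t=10^{-2}$), so this would destroy the uniform-in-$\eps$ estimate precisely in the classical regime. More importantly, the first claim reflects a misunderstanding of what the Chapman--Enskog construction is doing. The augmented $U$ is the \emph{exact} solution of the two-scale transport equation \eqref{equ-TS-1} with initial data $U_0$; since $U_0$ is built so that $U_0(0,x)=\Phi_0(x)$ exactly (this is the role of the iteration \eqref{equ-2-10-1}), the function $t\mapsto U(t,t/\eps^2,x)$ solves the same ODE as $\Psi$ with the same initial value, hence $U(t,t/\eps^2,x)=\Psi(t,x)$ identically and $\mathcal{E}_{\mathrm{init}}=0$. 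The truncation of the Chapman--Enskog expansion at $\eps^6$ does \emph{not} introduce a residual in the trace; its purpose is solely to make $U_0$ well-prepared so that $\partial_t^k U$, $k\le 4$, are bounded uniformly in $\eps$ (this is \eqref{der bound} in the paper, and it is exactly what feeds into the local error of the exponential integrator). Once you replace your $\mathcal{E}_{\mathrm{init}}$ step by this observation, your argument coincides with the paper's.
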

\begin{remark}
The   error estimates in Theorem \ref{theo-1} also hold for the scheme \eqref{equ-3-14-1}-\eqref{equ-3-14-2}, and the proof follows a similar way as given in  \cite{FXY,WZ}.
\end{remark}
\begin{proof}
Here we only prove the global error of the SEP-TS4 integrator, and the proof of EEP-TS4 scheme is similar.

The property of two-scale differential equation will be used in the error analysis and we first study this aspect.
In an analogous way as in \cite{CCLM,LWZ}, the solution of the two-scale system \eqref{equ-TS-1} with the initial value \eqref{init-1}
can be estimated as
$$
\norm{U(t,\tau,x)}_{L_{\tau}^{\infty}(H^{\nu+m_{0}})}\leq C, \ \forall t\in [0,T],
$$
where the constant $C>0$ depends on $\norm{\Phi_{0}}_{L_{\tau}^{\infty}(H^{\nu+m_{0}})}$ but is independent of $\eps$.
Moreover,  the derivatives of $U$ w.r.t. $t$ are bounded as
\begin{equation}\label{der bound}
\norm{\partial_{t}^{k}U(t,\tau,\cdot)}_{L_{\tau}^{\infty}(H^{\nu+m_{0}-2k})}\leq C, \ \forall t\in [0,T], \ k=1,2,3,4.
\end{equation}

Then we study the error brought by the  Fourier spectral method in the spatial $x$.
By estimates on projection error \cite{STW}, we can obtain that
$$
 \norm{\Psi(t_{n},x)-W(t_{n})}_{H^{\nu}} \lesssim \triangle x ^{m_{0}}.$$
In order to study the error brought by Fourier spectral method on $\tau$, define the error function by $$\fe_{Z}^{n}(\tau):=Z(t_{n},\tau)-P_{\mathcal{M}}Z^{n}$$ and the projected error as
$$\fe^{n}_{\mathcal{M}}(\tau):=P_{\mathcal{M}}Z(t_{n},\tau)-Z^{n}_{\mathcal{M}}(\tau),$$ where  we use the notation $Z^{n+1}_{\mathcal{M}}(\tau)=\sum\limits_{l\in\mathcal{M}}(\widehat{Z}^{n+1})_{l}\fe^{\ii l\tau}$.  Since $Z(t_{n},\tau)$ is an argumented function of $W(t_{n})$ and $\norm{W(t_{n})}_{H^{\nu}}=\norm{Z(t_{n},t_{n}/\eps^{2})}_{H^{\nu}}$, which implies that $Z(t_{n},\tau)\in H^{\nu}$. By applying the triangle inequality and estimates on projection error, we  obtain that
\begin{align}\label{err-2}
\norm{\fe^{n}_{Z}}_{H^{1}}&\leq \norm{\fe^{n}_{\mathcal{M}}}_{H^{1}}+
\norm{Z(t_{n},\tau)-P_{\mathcal{M}}Z(t_{n},\tau)}_{H^{1}} \lesssim\norm{\fe^{n}_{\mathcal{M}}}_{H^{1}}+\triangle \tau^{\nu-1}.
\end{align}
Based on the transformation of the equation and the formulation of integrators, we get
\begin{equation}\begin{aligned}\label{err-ff}
\norm{\fe_{\Phi}^{n}}_{H^{1}}&=\norm{\Phi(t_{n},\cdot)-\Phi^n}_{H^{1}}
=\norm{e^{\frac{-\ii t_{n}}{\eps^{2}}Q^{\eps}} \Psi(t_{n},\cdot)-\fe^{\frac{-\ii t_{n}}{\eps^{2}}Q^{\eps}}P_{\mathcal{M}}Z^n}_{H^{1}}\\
&\lesssim \norm{ \Psi(t_{n},\cdot)- P_{\mathcal{M}}Z^n}_{H^{1}} \\
&=\norm{ \Psi(t_{n},\cdot) -W(t_{n})+W(t_{n})-Z(t_{n},t_{n}/\eps^2)+Z(t_{n},t_{n}/\eps^2)-P_{\mathcal{M}}Z^n}_{H^{1}}\\
& \leq \norm{\Psi(t_{n},\cdot)-W(t_{n})}_{H^{1}}+\norm{\fe^{n}_{Z}}_{H^{1}}  \leq \norm{\Psi(t_{n},\cdot)-W(t_{n})}_{H^{\nu}}+\norm{\fe^{n}_{Z}}_{H^{1}} \\
& \lesssim \norm{\fe^{n}_{\mathcal{M}}}_{H^{1}}+\triangle x ^{m_{0}}+\triangle \tau^{\nu-1}.
\end{aligned}
\end{equation}
Therefore, the estimate of error $e_{\Phi}^{n}$ is converted to the estimate of error $\fe^{n}_{\mathcal{M}}$.

For the error  $\fe^{n}_{\mathcal{M}}(\tau) =P_{\mathcal{M}}Z(t_{n},\tau)-Z^{n}_{\mathcal{M}}(\tau)=
\sum\limits_{l\in\mathcal{M}}(\widehat{Z}(t_{n}) -\widehat{Z}^{n})_{l}\fe^{il\tau}$,
we need to study the error $\widehat{\mathbf{Z}}(t_{n}) -\widehat{\mathbf{Z}}^{n}$.
In order to simplify the notation, we set $\widehat{\Gamma}(t)=\Xi(t,\widehat{\mathbf{Z}}(t))$.
Substituting the solution of  \eqref{equ-2-21-2} into \eqref{equ-2-10-3}, we write
\begin{equation}\label{equ-2-10-4}
\begin{aligned}
&\widehat{\mathbf{Z}}(t_{n}+c_{j}\triangle t)=\fe^{c_{j}\ii\Theta\triangle t }\widehat{\mathbf{Z}}(t_{n})+\triangle t
\sum_{k=1}^{3} a_{jk} (\ii\Theta\triangle t )\widehat{\Gamma}(t_{n}+c_{k}\triangle t)+\delta^{n,j},\\
&\widehat{\mathbf{Z}}(t_{n+1})=\fe^{\ii\Theta\triangle t }\widehat{\mathbf{Z}}(t_{n})+\triangle t
\sum_{j=1}^{3} b_{j} (\ii\Theta\triangle t)\widehat{\Gamma}(t_{n}+c_{j}\triangle t)+\delta^{n+1},
\end{aligned}
\end{equation}
where $\delta^{n,j}$ and $\delta^{n+1}$ denote the remainders for $j=1,2,3$ and $n=0,1,\ldots,T/\Delta t$. Define
 \begin{equation*}
\begin{aligned} &  \Psi_{\rho}(z):=\varphi_{\rho}(z)-\sum_{j=1}^{3}b_j(z)\frac{c_j^{\rho-1}}{(\rho-1)!},\ \Psi_{\rho,j}(z):=\varphi_{\rho}(c_jz)c_j^{\rho}-\sum_{k=1}^{3}a_{jk}(z)\frac{c_{k}^{\rho-1}}{(\rho-1)!},
 \end{aligned}
\end{equation*}
for $j=1,2,3$ and $\rho=1,2,\ldots.$
Subtracting \eqref{equ-2-10-4} from \eqref{equ-2-10-2}  with $\xi=\triangle t$ and applying the Taylor expansion to $\widehat{\Gamma}(t)$,  $\delta^{n+1}$ can be derived as
\begin{equation}\label{2-10-5}
\begin{aligned}
\delta^{n+1}=& \triangle t \int_{0}^{1}   \fe^{(1-\theta)\ii\Theta\triangle t}  \sum\limits_{\rho=1}^{4}\frac{(\theta  \triangle t)^{\rho-1}}{(\rho-1)!}\frac{\textmd{d}^{\rho-1}}{\textmd{d} t^{\rho-1}} \widehat{\Gamma} (t_{n}) {\rm d}\theta\\&- \triangle t \sum\limits_{j =1}^{3}   {b}_{j}\big(\ii\Theta\triangle t\big) \sum\limits_{\rho=1}^{4}\frac{ c_{j}^{\rho-1}\triangle t^{\rho-1}}{(\rho-1)!}\frac{\textmd{d}^{\rho-1}}{\textmd{d} t^{\rho-1}}\widehat{\Gamma} (t_{n})+\widetilde{\delta_{4}^{n+1}} \\
=&  \sum\limits_{\rho=1}^{4}\triangle t^\rho \Psi_{\rho}\big(\ii\Theta\triangle t\big)  \frac{\textmd{d}^{\rho-1}}{\textmd{d} t^{\rho-1}}\widehat{\Gamma} (t_{n})+\widetilde{\delta_{4}^{n+1}}.
\end{aligned}
\end{equation}
Here, $\widetilde{\delta_{4}^{n+1}}$ represents the truncation error of the Taylor expansions, which is bounded as $\widetilde{\delta_{4}^{n+1}}=\mathcal{O}(\Delta t^{5})$ based on \eqref{der bound}.
Using the choice \eqref{sori4 coe}, it is easy to check that $\Psi_{\rho}=0$ for $\rho=1,2,3,4$. Thus one gets
$$\delta^{n+1}=\widetilde{\delta_{4}^{n+1}}=\mathcal{O}(\Delta t^{5}).$$
Similarly, analogous results can be derived as
$$\delta^{n,j}=\sum\limits_{\rho=1}^{3}\triangle t^{\rho} \Psi_{\rho,j}\big(\ii\Theta \triangle t\big)   \frac{\textmd{d}^{\rho-1}}{\textmd{d} t^{\rho-1}}\widehat{\Gamma}(t_{n})+\widetilde{\delta_{3}^{n,j}}=\widetilde{\delta_{3}^{n,j}}
$$
with  $\widetilde{\delta_{3}^{n,j}}=\mathcal{O}(\Delta t^{4})$.  Following the similar error estimates of exponential integrators, we can get that the time error   is
$$
\norm{\widehat{\mathbf{Z}}(t_{n}) -\widehat{\mathbf{Z}}^{n}}_{H^{1}}\lesssim\triangle t^{4}.$$
This, together with equation \eqref{err-ff}, gives the global error \eqref{equ-2-10-6}.
\end{proof}

\subsection{Long-time analysis}
Now we turn to the long-time analysis and to this end, consider  the    notations (see \cite{LWZ}):
\begin{equation*}\displaystyle
\begin{array}[c]{ll}
&
\mathbf{k}=\left(k_{-\frac{N_{\tau}}{2},1},\ldots,
k_{\frac{N_{\tau}}{2}-1,1},k_{-\frac{N_{\tau}}{2},2},\ldots,
 k_{\frac{N_{\tau}}{2}-1,2},\ldots,k_{-\frac{N_{\tau}}{2},2N_{x}},\ldots,
k_{\frac{N_{\tau}}{2}-1,2N_{x}}\right),\\
&\displaystyle |\mathbf{k}|=\sum_{l=1}^{2N_{x}}\sum_{j=-N_{\tau}/2}^{N_{\tau}/2-1}
|k_{j,l}|,\ \
 \boldsymbol{\omega}=(\textmd{diagonal elements of}\  \Theta),\ \ \mathbf{k}\cdot \boldsymbol{\omega}=\sum_{l=1}^{2N_{x}}\sum_{j=-N_{\tau}/2}^{N_{\tau}/2-1}
k_{j,l}\omega_{j,l}.
\end{array}
\end{equation*}
Using the notation
$\mathcal{Q}= \{\mathbf{k}\in \mathbb{Z}^{D}:  \textmd{there exists an}\ l\in \{1,\ldots,2N_{x}\}\
 \textmd{such that}\
|\mathbf{k}_{:,l}|=|\mathbf{k}|\}$, we define  the resonance module
$\mathcal{M}= \{\mathbf{k}\in {\mathcal{Q}}:\ \mathbf{k}\cdot
\boldsymbol{\omega}=0\}$. Let $\mathcal{K}$
represent a set of representatives for the equivalence classes in  ${\mathcal{Q}}/ \mathcal{M}$,
selected such that for every  $\mathbf{k}\in\mathcal{K}$, the sum $|\mathbf{k}|$ is   the smallest within its equivalence class $[\mathbf{k}] = \mathbf{k} +\mathcal{M}$, and it is also required that
 $-\mathbf{k}\in\mathcal{K}.$
  For any integer $N>0$, we introduce the following notation:
\begin{equation}\begin{array}{ll}\mathcal{N}_N={\{\mathbf{k}\in\mathcal{K}:  |\mathbf{k}| \leq N \}},\ \ \
\mathcal{N}_N^*=\mathcal{N} \bigcup \{\langle 0\rangle_l\}_{l=1,2,\ldots,2N_{x}},
\end{array} \label{NN}\end{equation}
where $\langle j\rangle_l$ is the unit coordinate vector in  $ \mathbb{R}^{D}$,   represented as $(0, \ldots , 0, 1, 0, \ldots,0)^{\intercal}$  with the sole non-zero entry of $1$ located at the   $(j,l)$-th position. Further details on these notations can be found in \cite{LWZ}.

\begin{theorem}(\textbf{Long time near conservations})\label{Long-time thm}
For the initial value  \eqref{init-1} of the two-scale equation \eqref{equ-TS-1}, it is assumed to have the bound  $0<\delta_0:=\norm{\Phi_{0}}_{L_{\tau}^{\infty}(H^{\nu+m_{0}})}<1$.
Moreover, we assume the following  non-resonance requirement $
|\sin\big(\frac{1}{2}\hh \omega_{j,1}\big)| \geq c_1 \sqrt{\eps}
\   \textmd{for}\     j=-\frac{N_{\tau}}{2},-\frac{N_{\tau}}{2}+1,\ldots,\frac{N_{\tau}}{2}-1,$ and the condition  on the time stepsize: $\hh/\sqrt{\eps} \geq c_2 > 0.$
The symmetric numerical approximation \eqref{equ-3-10-1} produced by  SEP-TS4   has the following time near conservations
\begin{equation}\label{er}
\begin{aligned}
  &\frac{\eps^2}{\delta_0^2} |E(\Phi^{n})- E(\Phi^{0})|= {\mathcal{O}(\eps^{2} \delta_0^4)}+ \mathcal{O}(\delta_{\mathcal{F}}),\  \frac{1}{\delta_0^2}|M(\Phi^{n})- M(\Phi^{0})| = {\mathcal{O}(\eps^{2}\delta_0^4)}+ \mathcal{O}(\delta_{\mathcal{F}}),
\end{aligned}
\end{equation}
where $   0\leq n\hh\leq  \eps^{-2}\delta_0^{-N+5}$, $N$ is an arbitrarily large integer  given in  \eqref{NN}, $\mathcal{O}$ denotes the term whose Euclidean norm is bounded, and $\delta_{\mathcal{F}}$ denotes   the error brought by the Fourier pseudospectral discretizations in $x$ and $\tau$.
The constants symbolized by $\mathcal{O}$ can depend on $c_1, c_2,   N, N_{\tau}, N_{x}$ but  are independent of $n, \Delta t, \eps, \delta_0$.   
\end{theorem}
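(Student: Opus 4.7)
The plan is to conduct the long-time analysis through a modulated Fourier expansion (MFE) of the numerical flow, adapted to the two-scale setting. The starting point is to write the numerical approximation $\widehat{\mathbf{Z}}^n$ produced by SEP-TS4 as a formal series
\begin{equation*}
\widehat{\mathbf{Z}}^n \;\approx\; \sum_{\mathbf{k}\in\mathcal{N}_N^{*}} \fe^{\ii(\mathbf{k}\cdot\boldsymbol{\omega})\,t_n}\, z^{\mathbf{k}}(\eps t_n),
\end{equation*}
where the modulation coefficients $z^{\mathbf{k}}(\cdot)$ are smooth functions of a slow time variable. I would substitute this ansatz into the one-step map \eqref{equ-2-10-3} with coefficients \eqref{sori4 coe}, Taylor-expand both the propagator $\fe^{\ii\Theta\Delta t}$ and the nonlinear stages $\Xi(t_n+c_k\Delta t,\cdot)$ around the modulation phases, and match terms of the form $\fe^{\ii(\mathbf{k}\cdot\boldsymbol{\omega})t_n}$. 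This produces a hierarchy of modulation equations whose leading order reproduces the formal two-scale dynamics of \eqref{equ-2-21-2} and whose higher-order corrections are controlled by the stiff-order conditions satisfied by SEP-TS4.

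Next I would establish bounds on the modulation functions. Under the smallness hypothesis $\delta_0<1$ on the initial data and the non-resonance condition $|\sin(\tfrac{1}{2}\Delta t\,\omega_{j,1})|\geq c_1\sqrt{\eps}$ together with $\Delta t/\sqrt{\eps}\geq c_2$, the small divisors that appear when inverting the discrete phase factors are $\mathcal{O}(\sqrt{\eps})$, which combined with the polynomial smallness of each modulation coefficient (essentially $\|z^{\mathbf{k}}\|\lesssim \delta_0^{|\mathbf{k}|}$) yields uniform bounds on the MFE truncated at order $N$, with a remainder of size $\Delta t\,\delta_0^N$ per step. Standard arguments then propagate these bounds over a long time interval $0\leq n\Delta t\leq \eps^{-2}\delta_0^{-N+5}$.

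The key step is to extract almost-invariants from the modulated system. Because SEP-TS4 is time-symmetric by construction (the coefficients \eqref{sori4 coe} satisfy \eqref{equ-2-10-5}), the modulation system inherits a discrete time-reversal symmetry that, together with the gauge invariance of the original NLDE nonlinearity $\mathbf{F}(\Phi)$ and its $U(1)$ phase invariance, gives rise to two formal Lagrangian/Hamiltonian structures at the level of the MFE. From these I would construct an almost-energy $\mathcal{E}[z]$ and an almost-mass $\mathcal{M}[z]$ which are exactly conserved along the truncated modulation dynamics and which differ from $E(\Phi^n)$ and $M(\Phi^n)$ only by terms of size $\eps^2\delta_0^4$ after undoing the transformations $\Phi^{n}=\fe^{-\ii t_n Q^{\eps}/\eps^{2}}Z^n$ and the two-scale reduction \eqref{equ-2-9-7}. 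Iterating the defect estimate $n$ times and adding the pseudospectral error $\delta_{\mathcal{F}}$ coming from the $x$- and $\tau$-discretizations then yields the drift bound \eqref{er}.

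The hardest part will be the combined treatment of the two small parameters $\eps$ and $\delta_0$: the frequencies $\omega_{j,l}=\mathcal{O}(\eps^{-2})$ make the phase factors $\fe^{\ii(\mathbf{k}\cdot\boldsymbol{\omega})\Delta t}$ sensitive to the non-resonance hypothesis and force the small-divisor estimates to be tracked explicitly, while the scaling of the nonlinearity and of the initial data $\underline{U}^{[3]}+\eps^2 h_1+\eps^4 h_2+\eps^6 h_3$ from \eqref{init-1} must be exploited to show that the non-Hamiltonian part of the MFE defect is not merely $\mathcal{O}(\delta_0^{N+1})$ but carries the extra $\eps^2$ factor visible on the right of \eqref{er}. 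Matching these two expansions cleanly — so that the almost-invariants of the symmetric scheme coincide with $E$ and $M$ up to the claimed order — is the technical crux; once this is done, the long-time bound follows from a patched-interval argument over $\mathcal{O}(\eps^{-2}\delta_0^{-N+5})$ steps.
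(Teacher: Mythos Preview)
Your proposal is correct and follows essentially the same modulated Fourier expansion approach as the paper: substitute an MFE ansatz for $\widehat{\mathbf{Z}}^n$ into the SEP-TS4 map, derive and bound the modulation system under the non-resonance and stepsize hypotheses, extract almost-invariants close to $E$ and $M$, and iterate over the long interval. The only minor deviations are that the paper parametrizes the modulation coefficients by $t$ rather than a rescaled slow time $\eps t$, and it records the sharper coefficient bounds $\alpha_{j,l}^{\mathbf{k}}=\mathcal{O}\big(\eps^{2}\min(\delta_0^{|\mathbf{k}|},\delta_0^{3})\big)$ for $\mathbf{k}\neq\langle j\rangle_l$ (with $\dot{\alpha}_{j,l}^{\langle j\rangle_l}=\mathcal{O}(\eps^{5/2}\delta_0)$), which is exactly the $\eps^{2}$ gain you flag as the crux in your last paragraph.
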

 \begin{proof}
This result is established within the framework of modulated Fourier expansion (MFE) \cite{hairer2000,hairer2006,WZ}. The proof is provided for the energy conservation property and  the derivation for mass conservation follows similarly.

We initially seek the modulated Fourier expansion (MFE) of the numerical solution $\widehat{\mathbf{Z}}^n$ produced by \eqref{equ-2-10-3}:
\begin{equation}
\begin{array}{ll}
\widehat{\mathbf{Z}}^n=\Phi_{\textmd{MFE}}( t ):= \sum\limits_{\mathbf{k}\in\mathcal{N}^*_{\infty}}
\mathrm{e}^{\mathrm{i}(\mathbf{k} \cdot\boldsymbol{\omega} )
 t }\alpha^{\mathbf{k}}( t ), \ \   t =n \hh,
\end{array} \label{MFE-1}%
\end{equation}
 where $\mathcal{N}^*_{\infty}$  represents the limit of $\mathcal{N}_N^*$ as $N \to +\infty$. Analogous to the analysis of local errors, we demonstrate that the error between $\widehat{\mathbf{Z}}^{n,j}$ and $\Phi_{\textmd{MFE}}( t + c_i\hh )$ is bounded by $\mathcal{O}( \hh^{5}) \Phi_{\textmd{MFE}} ^{(4)}( t + \theta_i \hh)$ for some $\theta_i \in [0, c_i]$ and $i = 1, 2, 3$. Consequently, one has
\begin{equation*} \begin{array}[c]{ll}%
   \widehat{\mathbf{Z}}^{n,i}=\Phi_{\textmd{MFE}}( t +c_i\hh )+C  \hh^5\mathcal{D}^4\Phi_{\textmd{MFE}}( t +\theta_i\hh ),\ \ i=1,2,3,
\end{array}
\end{equation*}
where $C$ denotes an error constant that is independent of $n, \hh, \eps$, and $\mathcal{D}$ represents the differential operator as \cite{hairer2006}.

Upon substituting   \eqref{MFE-1} into  \eqref{equ-2-10-3} and introducing the operator $\mathcal{L}$
\begin{equation*} \begin{array}[c]{ll}\mathcal{L}(\hh \mathcal{D}):=&\big(\fe^{\hh \mathcal{D}} -\fe^{\ii\Theta\triangle t} \big)
\Big(b_{1}(\ii\Theta\triangle t)(\fe^{c_1\hh \mathcal{D}}+C \hh^5 \mathcal{D}^4\fe^{\theta_1\hh \mathcal{D}})+b_{2}(\ii\Theta\triangle t)\\&(\fe^{c_2\hh \mathcal{D}}+C \hh^5 \mathcal{D}^4\fe^{\theta_2\hh \mathcal{D}})+b_{3}(\ii\Theta\triangle t)(\fe^{c_3\hh \mathcal{D}}+C \hh^5 \mathcal{D}^4\fe^{\theta_3\hh \mathcal{D}})\Big)^{-1},
\end{array}\end{equation*}
we obtain $\mathcal{L}(\hh \mathcal{D}) \Phi_{\textmd{MFE}}( t )=\hh f(\Phi_{\textmd{MFE}})$, where $f(\Phi_{\textmd{MFE}})=\Xi(t,\Phi_{\textmd{MFE}})$.
By expanding the nonlinear term into its Taylor series, we derive
\begin{equation*}
\begin{aligned}&\mathcal{L}(\hh \mathcal{D})\Phi_{\textmd{MFE}}( t )
=\hh    \sum\limits_{\mathbf{k}\in\mathcal{N}^*_{\infty}}\mathrm{e}^{\mathrm{i}(\mathbf{k}
\cdot \boldsymbol{\omega})  t } \sum\limits_{m\geq
1}\frac{ f  ^{(m)}(0)
}{m!}
\sum\limits_{\mathbf{k}^1+\ldots+\mathbf{k}^m=\mathbf{k}} \Big[\alpha^{\mathbf{k}^1}\cdot\ldots\cdot
\alpha^{\mathbf{k}^m}\Big]( t ).
\end{aligned} %
\end{equation*}
In the subsequent development, we establish the modulation system for the coefficients $\alpha^{\mathbf{k}}( t )$. This is accomplished by substituting the ansatz given in   \eqref{MFE-1} into the relevant expressions and equating the coefficients associated with $\mathrm{e}^{\mathrm{i}(\mathbf{k} \cdot\boldsymbol{\omega})  t }$. Consequently, we arrive at the following results:
\begin{equation*}
\begin{aligned}
&\mathcal{L}(\hh \mathcal{D}+\mathrm{i}(\mathbf{k} \cdot\boldsymbol{\omega} )
\hh)\alpha^{\mathbf{k}}( t )=\hh   \sum\limits_{m\geq
1}\frac{f^{(m)}(0)}{m!}\sum\limits_{\mathbf{k}^1+\ldots+\mathbf{k}^m=\mathbf{k}} \Big[\alpha^{\mathbf{k}^1}\cdot\ldots\cdot
\alpha^{\mathbf{k}^m}\Big]( t ).
\end{aligned} %
\end{equation*}
 This constitutes the modulation system for the coefficients $\alpha^{\mathbf{k}}( t )$ of the modulated Fourier expansion (MFE). Based on this result and similar arguments given in   \cite{hairer2000,hairer2006,WZ},  we demonstrate that the numerical solution derived from the SEP-TS4 scheme can be represented by   \begin{equation}\begin{array}{ll}
\widehat{\mathbf{Z}}^n=\sum\limits_{\mathbf{k}\in\mathcal{N}_N^*}
\mathrm{e}^{\mathrm{i}(\mathbf{k} \cdot \boldsymbol{\omega})
 t }\alpha^{\mathbf{k}}( t )+R_{N}( t ),
           \end{array} \label{MFE-ERKN-0}%
\end{equation}
where $\alpha^{\mathbf{k}}$ are  smooth coefficient functions at $ t =n\hh$ and the term $R_{N}( t )$ represents the remainder resulting from the truncation process. The coefficient functions $\alpha^{\mathbf{k}}$ are bounded by
\begin{equation}%
\begin{array}{ll}
  \alpha_{j,l}^{\langle j\rangle_l}( t )=\mathcal{O}(\delta_0),\ \
  \dot{\alpha}_{j,l}^{\langle j\rangle_l}( t )=\mathcal{O}(\eps^{5/2}   \delta_0),\ \
 \alpha_{j,l}^{\mathbf{k}}( t )=\mathcal{O}\big(\eps^{2}   \min(\delta_0^{|\mathbf{k}|},\delta_0^{3}) \big),\ \    {\mathbf{k}\neq \langle j\rangle_l,}
\end{array} %
\label{coefficient func}%
\end{equation}
where $j\in \{-N_\tau/2,-N_\tau/2+1,\ldots,N_\tau/2-1\}$ and  $l=1,2,\ldots,2N_{x}$.
  Standard convergence estimates (similar to the proof of Theorem \ref{theo-1}) yield a  bound for the remainder term $R_{N}$:
\begin{equation}\begin{array}{ll}
&R_{N}( t )=\mathcal{O}( t   \eps^2\delta_0^{N+1}).
\end{array}\label{remainder}%
\end{equation}

The construction of the coefficients $\alpha^{\mathbf{k}}$ along with the remainder bound given by \eqref{remainder} yields an almost-invariant quantity. Furthermore, this almost-invariant is demonstrated to closely approximate the energy \eqref{equ-7}, with the magnitude of the discrepancy governed by the bound \eqref{coefficient func} on $\alpha^{\mathbf{k}}$. The analysis of this aspect is similar to  that of \cite{LWZ,WZ23}, and hence we only state the results here.

The synthesis of $\alpha^{\mathbf{k}}$ gives rise to an almost-invariant $\mathcal{H}( t )$, which exhibits small variation as it evolves from $0$ to $t$
 $$\mathcal{H}( t )=
\mathcal{H}(0)+\mathcal{O}( t   \eps^2\delta_0^{N+1}).$$
Furthermore, this almost-invariant is closely aligned with the energy $E$, and their  relationship is given by
$$
\mathcal{H}( t _n)=  E(\Phi^{n}) +\mathcal{O}(  \delta_0^6)+\mathcal{O}(  \delta_{\mathcal{F}}).
$$
Having established the preceding preparations,  we are now in a position to deduce the error in energy conservation.
\begin{equation*}
\begin{aligned}
  E(\Phi^{n})&= \mathcal{H}( t _n) +\mathcal{O}(  \delta_0^6)+ \mathcal{O}(  \delta_{\mathcal{F}}) 
 =   \mathcal{H}( t _{n-1})+\hh\mathcal{O}(\eps^{2}\delta_0^{N+1})+\mathcal{O}( \delta_0^6)+  \mathcal{O}(  \delta_{\mathcal{F}})\\
  &=   \mathcal{H}( t _{n-2})+2\hh\mathcal{O}(\eps^{2}\delta_0^{N+1})+\mathcal{O}( \delta_0^6)+  \mathcal{O}(  \delta_{\mathcal{F}})=\ldots\\
  &=    \mathcal{H}( t _{0})+n\hh\mathcal{O}(\eps^{2}\delta_0^{N+1})+\mathcal{O}( \delta_0^6)+  \mathcal{O}(  \delta_{\mathcal{F}})= E(\Phi^{0})+\mathcal{O}(  \delta_0^6)+ \mathcal{O}(  \delta_{\mathcal{F}}),
\end{aligned}
\end{equation*}
as long as $n\hh \eps^{2}\delta_0^{N+1} \leq \delta_0^6$. The demonstration of the nearly conserved energy is thereby concluded.
\end{proof}
\section{Numerical experiments}\label{sec-4}
In this section, we apply the proposed two fourth order uniformly accurate integrators to different kinds of Dirac equation. We will test the accuracy and long-term energy and mass performance of each integrator.
\subsection{One-dimensional Dirac equation} \label{sec-1d}
 We first consider the equation \eqref{equ-1} with  one dimension.
We take a bounded domain $\Omega=(a,b)$ and assume periodic boundary conditions.

\vspace{0.15cm}
 \textbf{Problem 1. Nonlinear Case.}
In this example,  we consider the bounded domain $\Omega=(-32,32)$ and the electric potential
$
V(x)=\frac{x-1}{x^{2}+1}.$
The nonlinearity is defined as
$
\mathbf{F}(\Phi)=(\Phi^{*}\sigma_{3}\Phi)\sigma_{3}$
and  the initial data $\Phi_{0}=(\phi_{1},\phi_{2})$ in \eqref{equ-3}  is selected  as
$
\phi_{1}(0,x)=\fe^{-\frac{x^{2}}{2}},    \phi_{2}(0,x)=\fe^{-\frac{(x-1)^{2}}{2}}.
$

\begin{figure}[t!]
$$\begin{array}{cc}
\psfig{figure=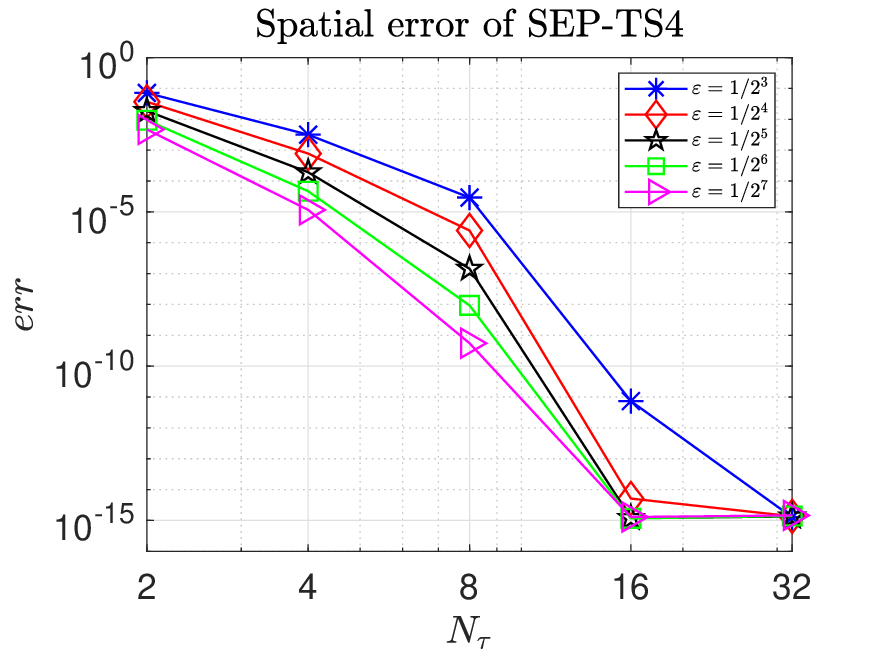,height=3.0cm,width=6.0cm}
\psfig{figure=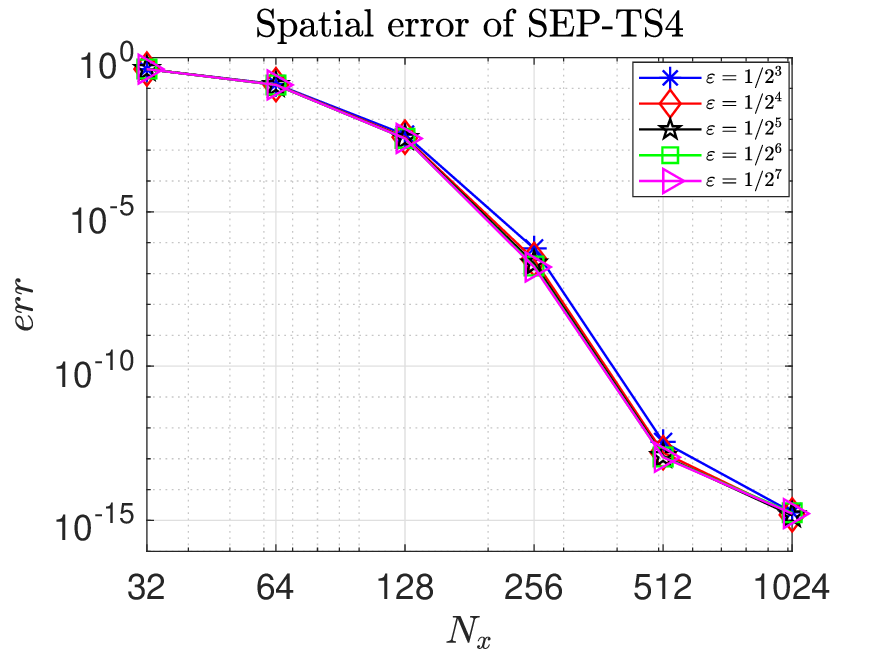,height=3.0cm,width=6.0cm}
\end{array}$$
\caption{Problem 1. Spatial error of NLDE \eqref{equ-1} in 1D at $t=1$ under different $\eps$  in $\tau$-direction  (left) and   in $x$-direction  (right).}\label{fig-1}
\end{figure}

We first test the spatial discretization error of the proposed SEP-TS4 which is given in Figure \ref{fig-1}. Here  the reference solution is obtained by SEP-TS4 with $\triangle t=0.01$, $N_{x}=2^{11}$  and $N_{\tau}=2^{8}$. The results of EEP-TS4 are similar and are omitted here. From these results, it can be observed that the spatial discritization in the $x$ and $\tau$ directions has spectral accuracy  for all $\eps\in (0,1]$. In the following experiments, we fixed $N_{x}=2^{10}$ and $N_{\tau}=2^{6}$, which is enough to achieve machine accuracy.

\begin{figure}[t!]
$$\begin{array}{cc}
\psfig{figure=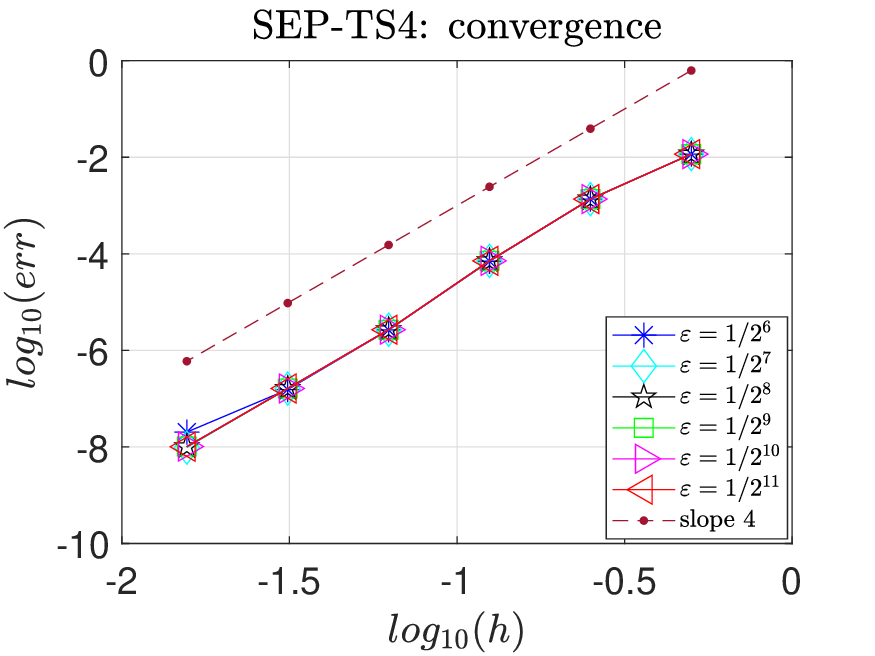,height=3cm,width=6cm}
\psfig{figure=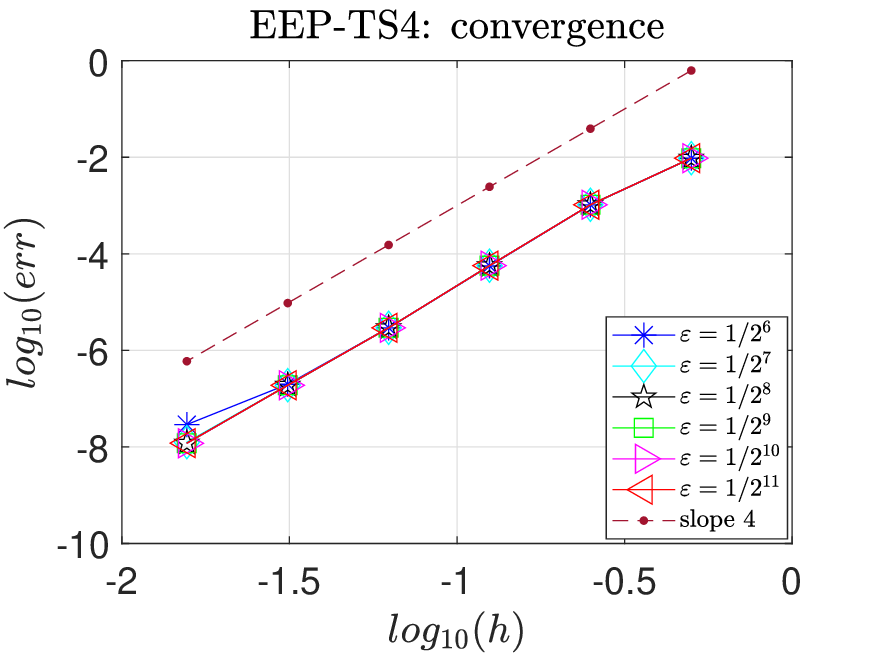,height=3cm,width=6cm}
\end{array}$$
\caption{Problem 1. Temporal error of NLDE \eqref{equ-1} in 1D at $t=1$ under different $\eps$.}\label{fig-2}
\end{figure}

Subsequently, we test the temporal errors, with the exact solution obtained via the second-order Strang splitting method (\cite{BCY}) using an ultra-fine time step   $10^{-6}$. The temporal error
$
err=\frac{\norm{\Phi^{n}-\Phi(t_{n},\cdot)}{l^{\infty}}}{\norm{\Phi(t_{n},\cdot)}_{l^{\infty}}}
$
at $t=1$ for varying $\eps$  are presented in Figure \ref{fig-2}. The numerical results show that the temporal errors for both SEP-TS4 and EEP-TS4 exhibit a fourth-order convergence rate, in alignment with the theoretical bounds \eqref{equ-2-10-6}.

\begin{figure}[t!]
$$\begin{array}{cc}
\psfig{figure=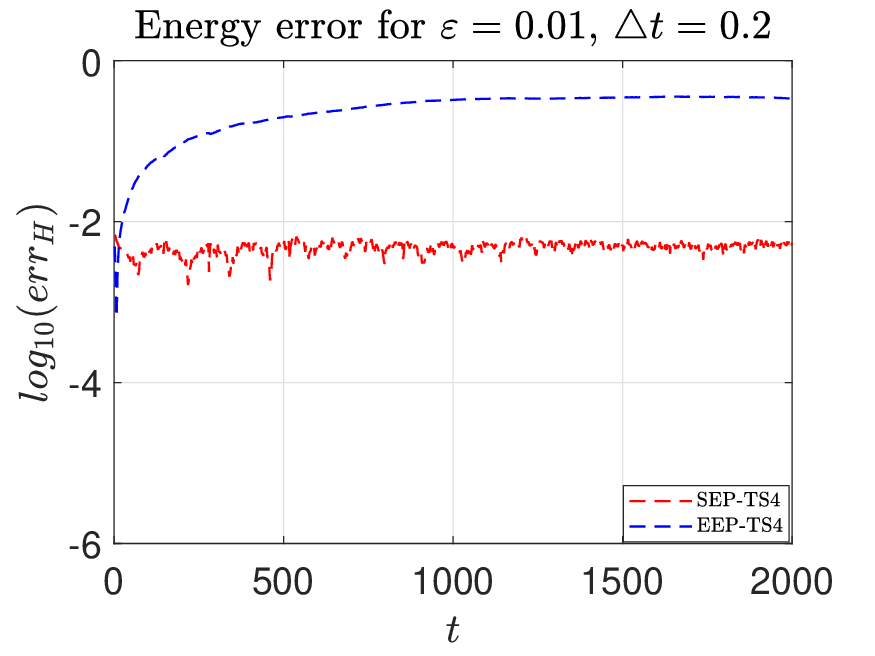,height=3cm,width=5cm}
\psfig{figure=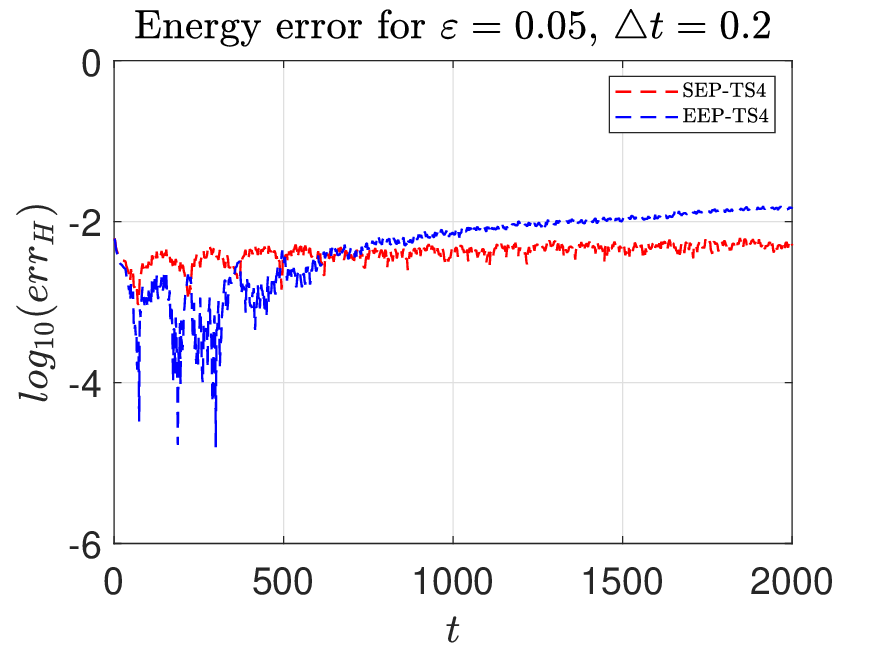,height=3cm,width=5cm}
\psfig{figure=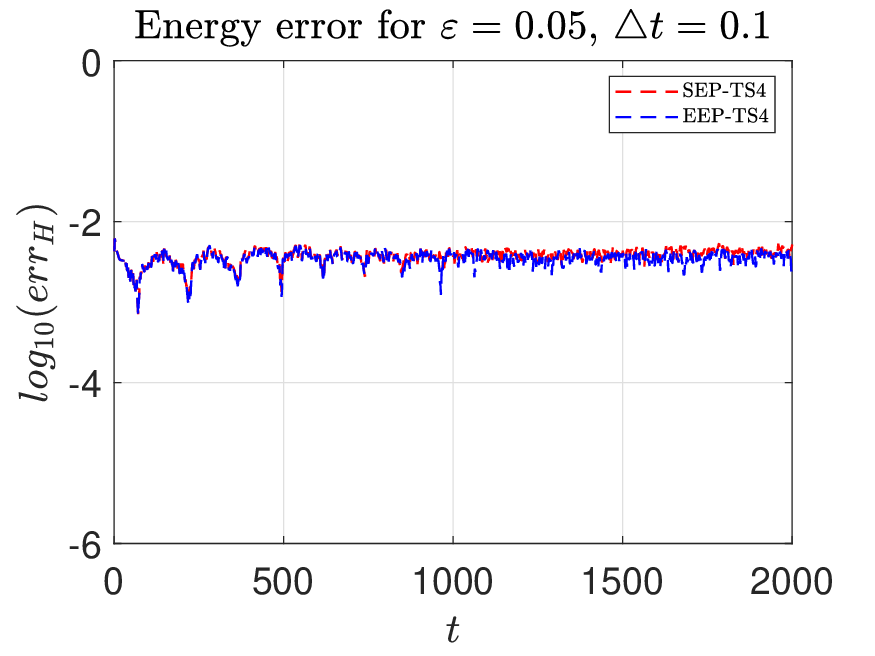,height=3cm,width=5cm}\\
\psfig{figure=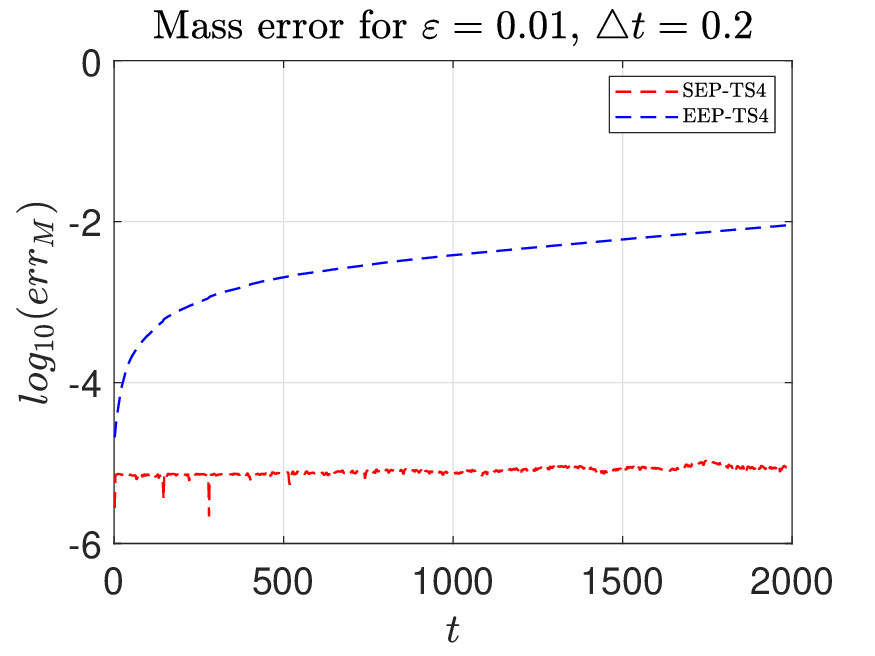,height=3cm,width=5cm}
\psfig{figure=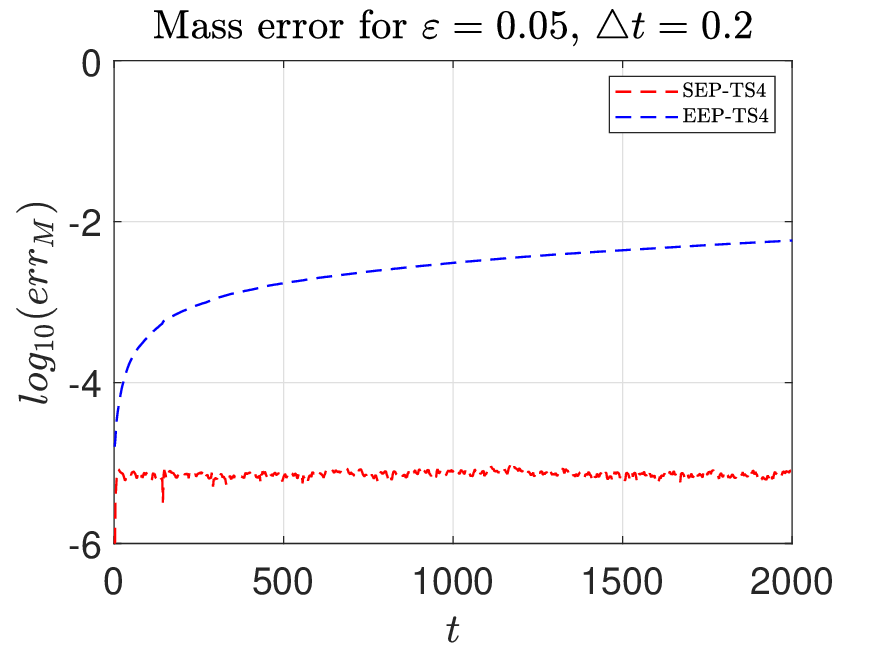,height=3cm,width=5cm}
\psfig{figure=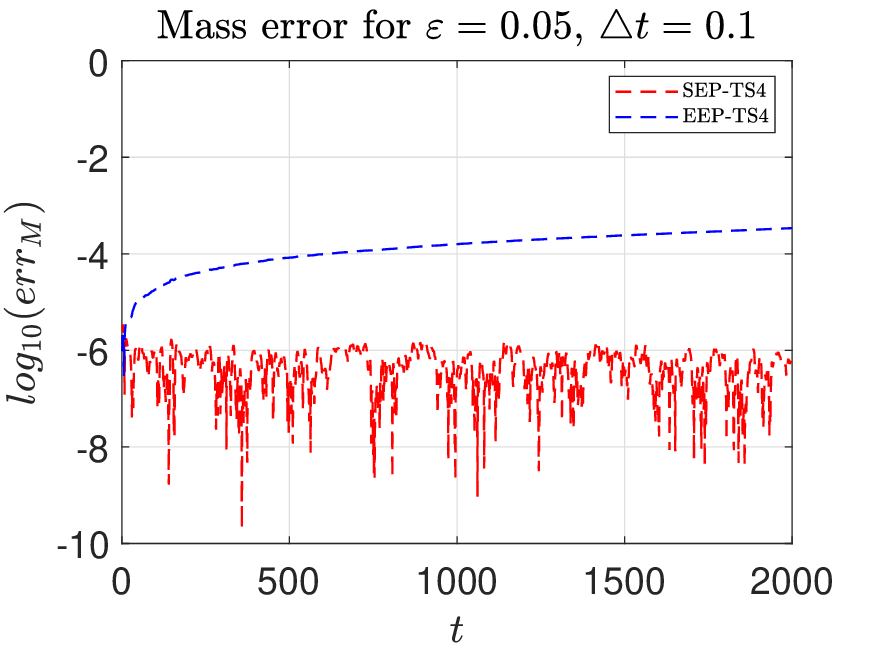,height=3cm,width=5cm}
\end{array}$$
\caption{Problem 1. Energy error (top) and mass error (bottom) of NLDE \eqref{equ-1} in 1D   under different $\eps$ and $\triangle t$.}\label{fig-3}
\end{figure}

Then we study the long-term performance of the EP-TS4 integrators by investigating their numerical energy $H(\Phi^{n})$ and mass $M(\Phi^{n})$. The conservation errors
$
err_{H}=\frac{|H(\Phi^{n})-H(\Phi^{0})|}{|H(\Phi^{0})|}$ and $err_{M}=\frac{|M(\Phi^{n})-M(\Phi^{0})|}{|M(\Phi^{0})|}
$
 are displayed  in Figure \ref{fig-3}. The result clearly illustrates that the symmetric SEP-TS4 integrator maintains  perfect long-term conservation properties, whereas the non-symmetric EEP-TS4 integrator exhibits a noticeable drift in both energy and mass.




\vspace{0.15cm}
 \textbf{Problem 2. Dynamics of traveling waves.}
When $\eps=1$, $V(x)=0$, $\lambda_{2}=0$, the NLDE \eqref{equ-1} in 1D with $\lambda_{1}=-1$ is
\begin{align}\label{equ-3-6-3}
i\partial_{t}\Phi=\bigg(-\frac{\ii}{\varepsilon}\sigma_{1}\partial_{x}+\frac{1}{\varepsilon^2}\sigma_{3}\bigg)\Phi
  -\Phi^{*}\sigma_{3}\Phi\sigma_{3}\Phi, \ \mathbf{x}\in\mathbb{R},
\end{align}
which admits soliton solutions with velocity $v$ initially placed as $x_{0}$ as
\begin{equation}\label{equ-3-6-1}
\begin{matrix}
\Phi^{ss}(x-x_{0},t)=\left(\begin{array}{cc} \sqrt{\frac{\gamma+1}{2}} & \text{sign}(v)\sqrt{\frac{\gamma-1}{2}} \\
\text{sign}(v)\sqrt{\frac{\gamma-1}{2}} & \sqrt{\frac{\gamma+1}{2}} \\ \end{array}\right)\Phi^{sw}(\tilde{t},\tilde{x}).
\end{matrix}
\end{equation}
Here $\gamma=1/\sqrt{1-v^{2}}$, $\tilde{x}=\gamma(x-x_{0}-vt)$, $\tilde{t}=\gamma(t-v(x-x_{0}))$, and $\Phi^{sw}$ is the standing wave defined as
\begin{equation}\label{equ-3-6-2}
\begin{matrix}
\Phi^{sw}(x,t)\equiv\left(\begin{array}{c} \phi_{1}^{sw}(x,t) \\ \phi_{2}^{sw}(x,t) \end{array}\right)
=\left(\begin{array}{c} A(x) \\ iB(x) \end{array}\right)\fe^{-\ii\omega t}, \ 0<\omega\leq1,
\end{matrix}
\end{equation}
where
\begin{align*}
A(x)=\frac{\sqrt{-\frac{2}{\lambda_{1}}(1-\omega^{2})(1+\omega)}\cosh(\sqrt{1-\omega^{2}}x)}{1+\omega\cosh(2\sqrt{1-\omega^{2}}x)}, \ \
B(x)=\frac{\sqrt{-\frac{2}{\lambda_{1}}(1-\omega^{2})(1-\omega)}\cosh(\sqrt{1-\omega^{2}}x)}{1+\omega\cosh(2\sqrt{1-\omega^{2}}x)}.
\end{align*}

\begin{figure}[t!]
$$\begin{array}{cc}
\psfig{figure=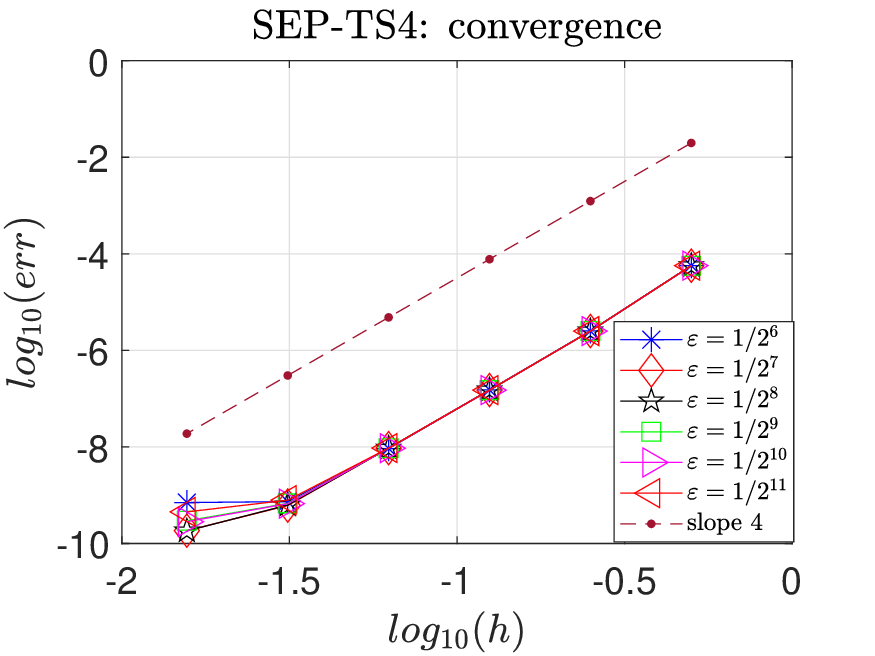,height=3cm,width=6cm}
\psfig{figure=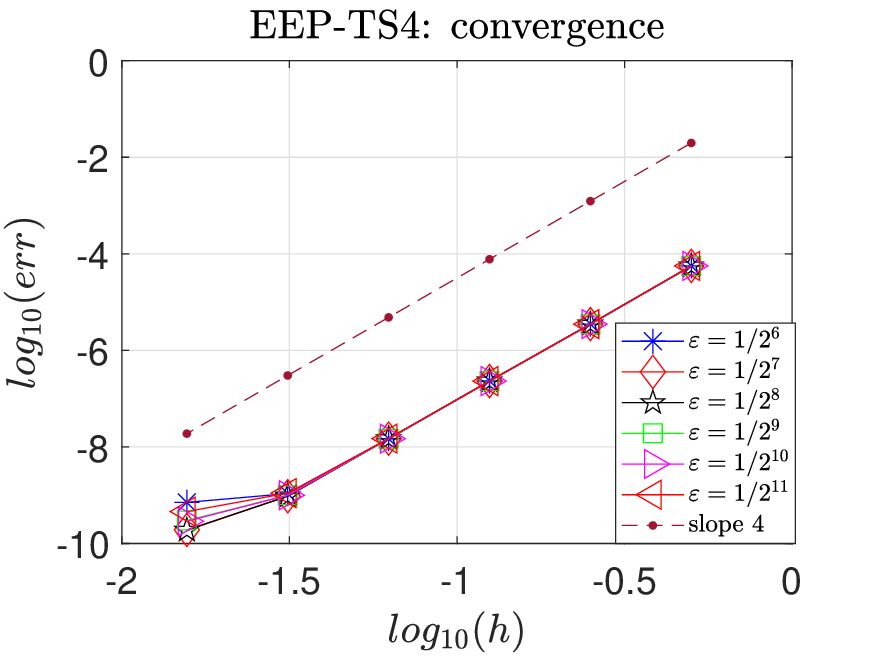,height=3cm,width=6cm}
\end{array}$$
\caption{Problem 2. Temporal error of NLDE \eqref{equ-3-6-3} in 1D at $t=1$ under different $\eps$.}\label{fig-3-10-5}
\end{figure}

\begin{figure}[t!]
$$\begin{array}{cc}
\psfig{figure=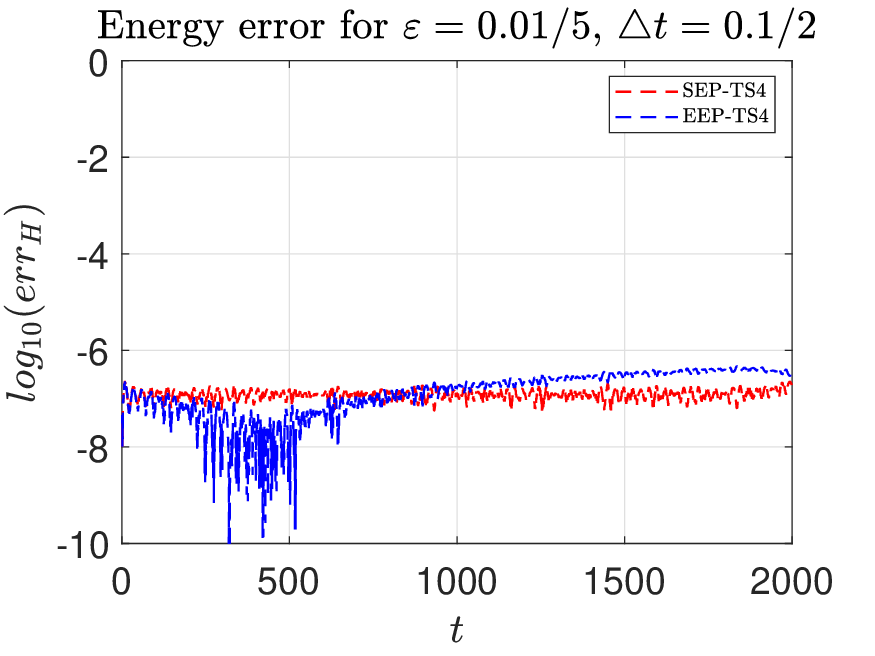,height=3cm,width=5cm}
\psfig{figure=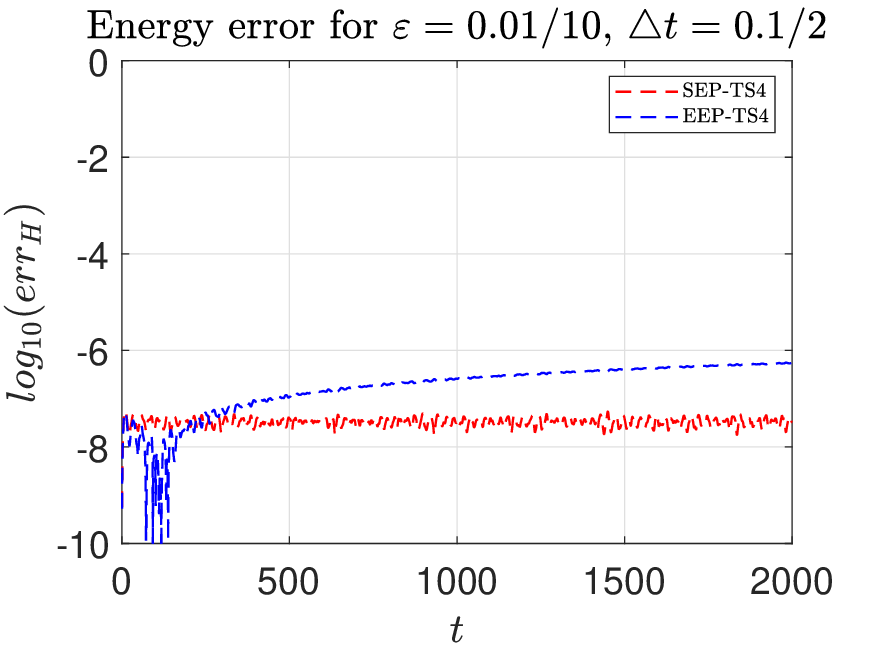,height=3cm,width=5cm}
\psfig{figure=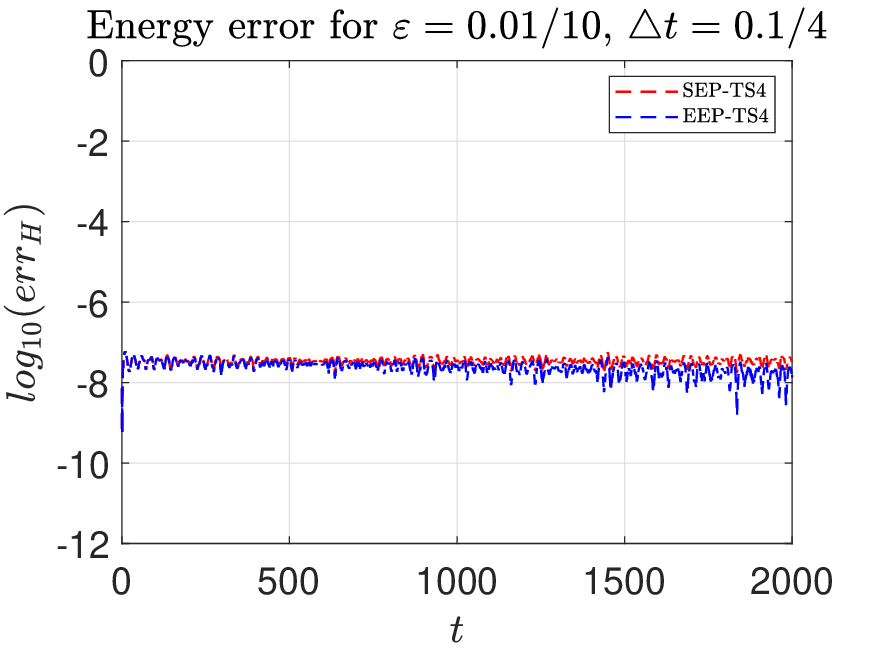,height=3cm,width=5cm}\\
\psfig{figure=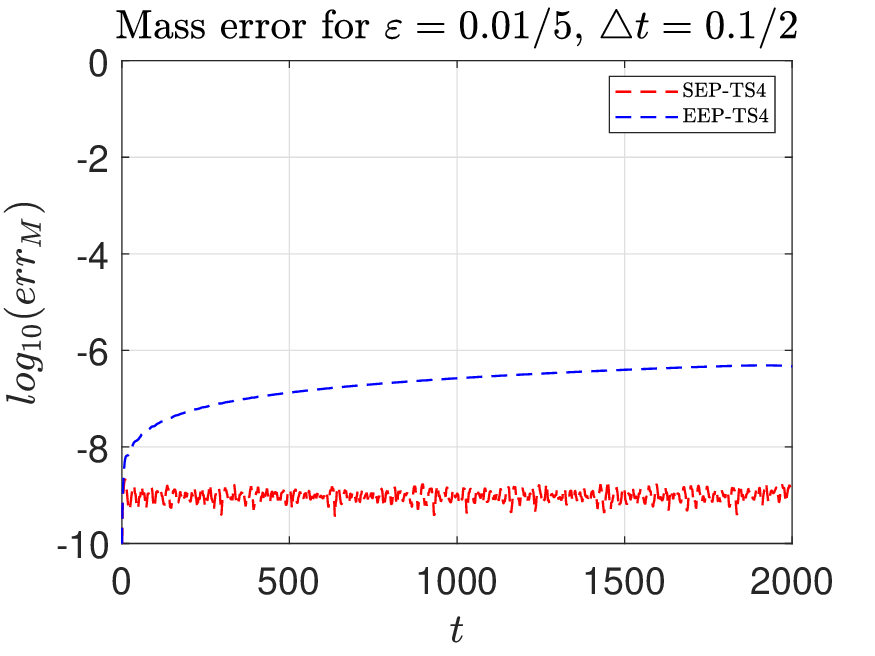,height=3cm,width=5cm}
\psfig{figure=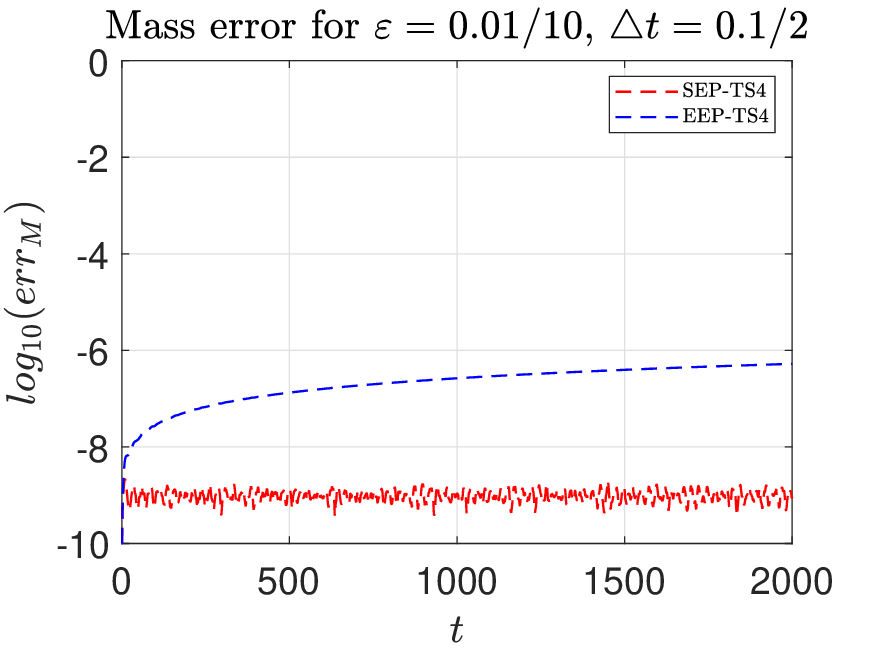,height=3cm,width=5cm}
\psfig{figure=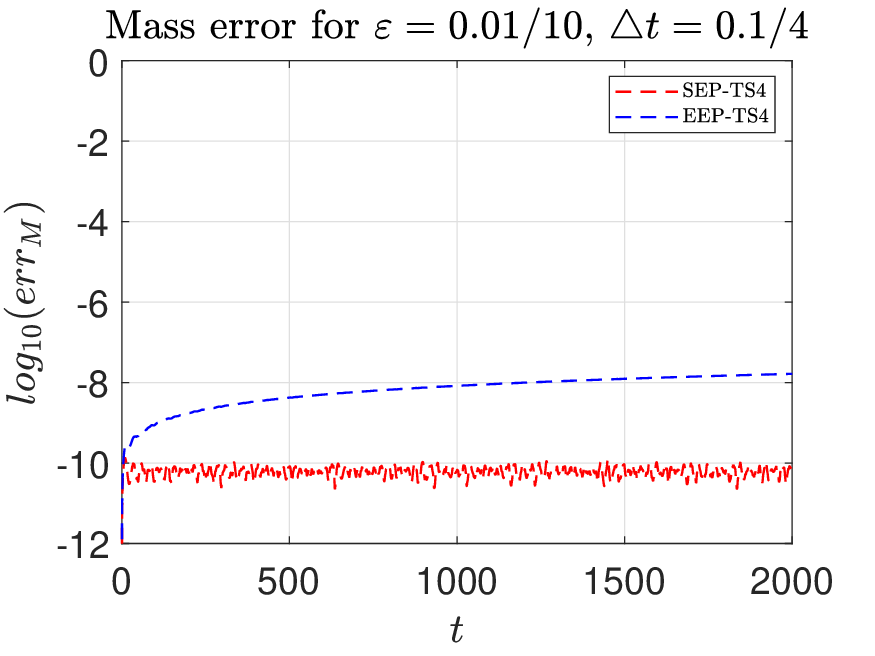,height=3cm,width=5cm}
\end{array}$$
\caption{Problem 2. Energy error (top) and mass error (bottom) of NLDE \eqref{equ-3-6-3} in 1D  with different $\eps$ and $\triangle t$.}\label{fig-3-10-6}
\end{figure}

\begin{figure}[t!]
$$\begin{array}{cc}
\psfig{figure=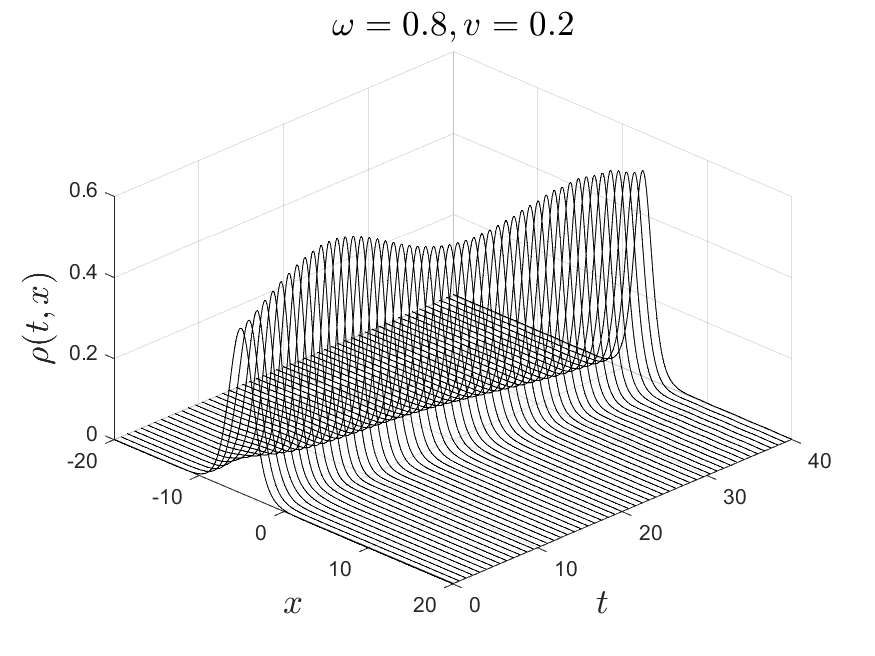,height=3.5cm,width=6cm}
\psfig{figure=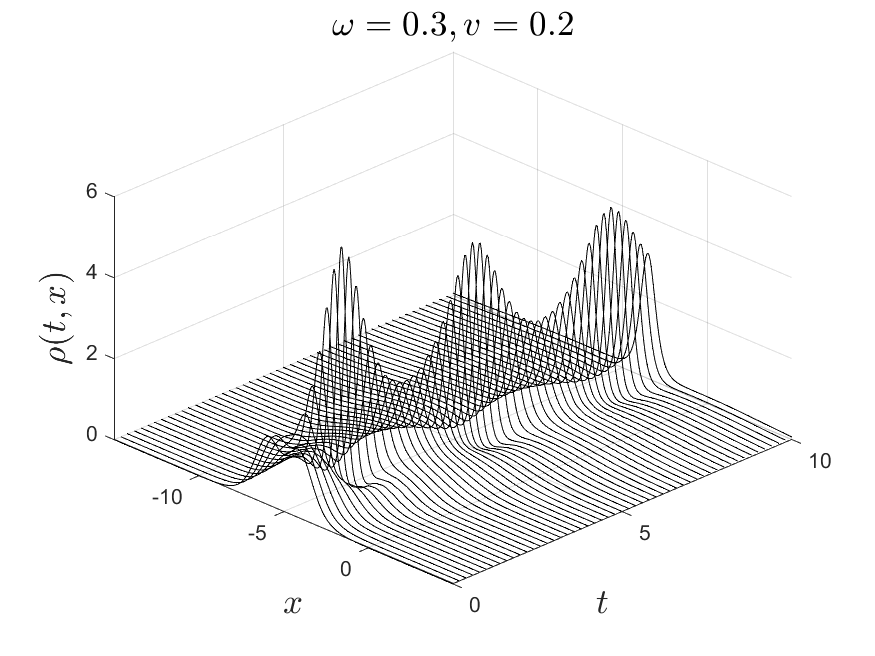,height=3.5cm,width=6cm}
\end{array}$$
\caption{Problem 2. Evolution of the density $\rho(t,x)=|\Phi(t,x)|^{2}$ of NLDE \eqref{equ-3-6-3} initially one-humped (left) and two-huped waves (right) with $\eps=0.01$.}\label{fig-3-6-1}
\end{figure}
In this study, we adopt these soliton solutions as initial conditions to investigate the dynamics of the Dirac equation \eqref{equ-3-6-3} in the nonrelativistic limit with $\eps\ll 1$. All numerical simulations are conducted on a bounded domain $\Omega=(-32,32)$ with spatial mesh size $N_{x}=2^{10}$, $N_{\tau}=2^{6}$.

It is well-established that soliton solutions exhibit two distinct profiles \cite{ST,XST}: the one-humped soliton for $\omega\in[1/2,1)$ and the two-humped soliton for $\omega\in(0,1/2)$. Firstly, the temporal errors and long term performances of Dirac equation \eqref{equ-3-6-3} are respectively  given in Figures \ref{fig-3-10-5} and \ref{fig-3-10-6}, where we take one-humped soliton solutions \eqref{equ-3-6-1} with $\omega=0.8$, $x_{0}=-5, v=0.2$ as initial conditions. Fourth order uniform accuracy of the proposed integrators and long time conservations for the symmetric integrator are clearly demonstrated.


\begin{figure}[t!]
$$\begin{array}{cc}
\psfig{figure=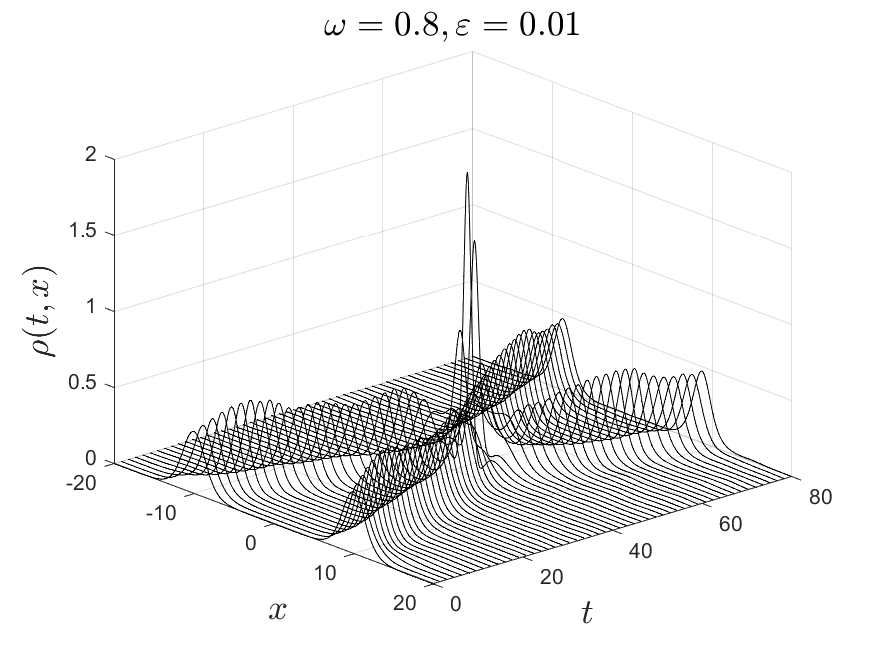,height=3.5cm,width=6cm}
\psfig{figure=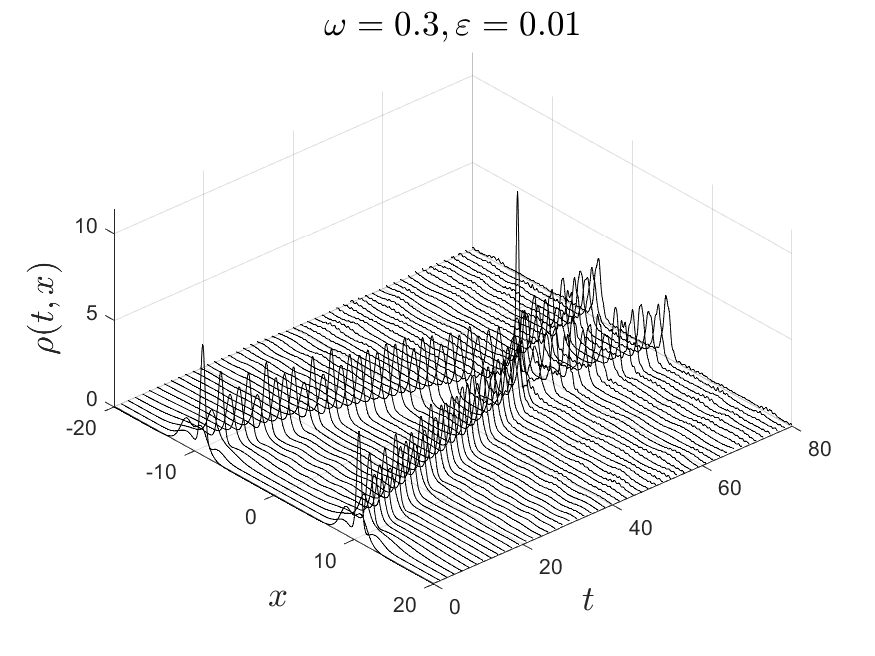,height=3.5cm,width=6cm}
\end{array}$$
\caption{Problem 2. Evolution of the density $\rho(t,x)=|\Phi(t,x)|^{2}$ of NLDE \eqref{equ-3-6-3} binary collision of one-humped (left) and two-huped waves (right) with $\eps=0.01$.}\label{fig-3-6-2}
\end{figure}

In Figure \ref{fig-3-6-1}, we examine the evolution of a single wave in the nonrelativistic regime by selecting $\omega=0.8$ and $\omega=0.3$ with $x_{0}=-5, v=0.2$. As depicted in the figure, both waves propagate to the right with oscillating amplitudes. The one-humped wave maintains its one-humped profile throughout its propagation, whereas the two-humped wave undergoes noticeable changes in its profile during its evolution.

In Figure \ref{fig-3-6-2}, we numerically solve the NLDE \eqref{equ-3-6-3} using the initial condition
$
\Phi(x,0)=\Phi_{l}^{ss}(x-x_{l},0)+\Phi_{r}^{ss}(x-x_{r},0),
$
where $x_{r}=-x_{l}=-10$, $v_{l}=-v_{r}=0.2$, $\omega_{r}=\omega_{l}=0.8  \ (0.3)$. These parameters describe two initial two-humped solitons with identical shapes and velocities but opposite directions \cite{ST}. In the nonrelativistic regime with $\eps=0.01$, two identical one-humped solitons ($\omega=0.8$) first collide and merge into a single wave, subsequently separating into two traveling waves with one-humped profiles. A similar behavior is observed for the collision of two-humped solitons.

\vspace{0.15cm}\textbf{Problem 3. Linear Case with magnetic potential.}
 In this  test, we demonstrate the applicability of the proposed integrators  to the linear Dirac equation  with magnetic potential (\cite{BCJT})
\begin{equation}\label{equ-5}
\begin{aligned}
&i\partial_{t}\Phi=\bigg(-\frac{\ii}{\varepsilon}\sum\limits_{j=1}^{d}\sigma_{j}\partial_{j}+\frac{1}{\varepsilon^2}\sigma_{3}\bigg)\Phi
+\big(V(\mathbf{x})I_{2}-\sum\limits_{j=1}^{d}A_{j}(\mathbf{x})\sigma_{j}\big)\Phi, \
 \Phi(0,\mathbf{x})=\Phi_{0}(\mathbf{x}), \ \mathbf{x}\in\mathbb{R}^{d},   \ t>0,
\end{aligned}
\end{equation}
where $I_{n}$ is the $n\times n$ identity matrix for $n\in\mathbb{N}$,
$V:=V(\mathbf{x})$ and $\mathbf{A}:=\mathbf{A}(\mathbf{x})=(A_{1}(\mathbf{x}),\ldots,A_{d}(\mathbf{x}))^{\intercal}$ are the real-valued electrical potential and magnetic potential vector.
The system \eqref{equ-5} conserves the total energy
\begin{align*}
E(t):&=\int_{\mathbb{R}^{d}}\bigg(-\frac{\ii}{\varepsilon}\sum\limits_{j=1}^{d}\Phi^{*}\sigma_{j}\partial_{j}\Phi+\frac{1}{\varepsilon^{2}}\Phi^{*}\sigma_{3}\Phi +V(\mathbf{x})|\Phi|^{2}-\sum\limits_{j=1}^{d}A_{j}(\mathbf{x})\Phi^{*}\sigma_{j}\Phi \bigg)d\mathbf{x}  \equiv E(0), \ t\geq 0.
\end{align*}
and the total mass \eqref{equ-6}. 

\begin{figure}[t!]
$$\begin{array}{cc}
\psfig{figure=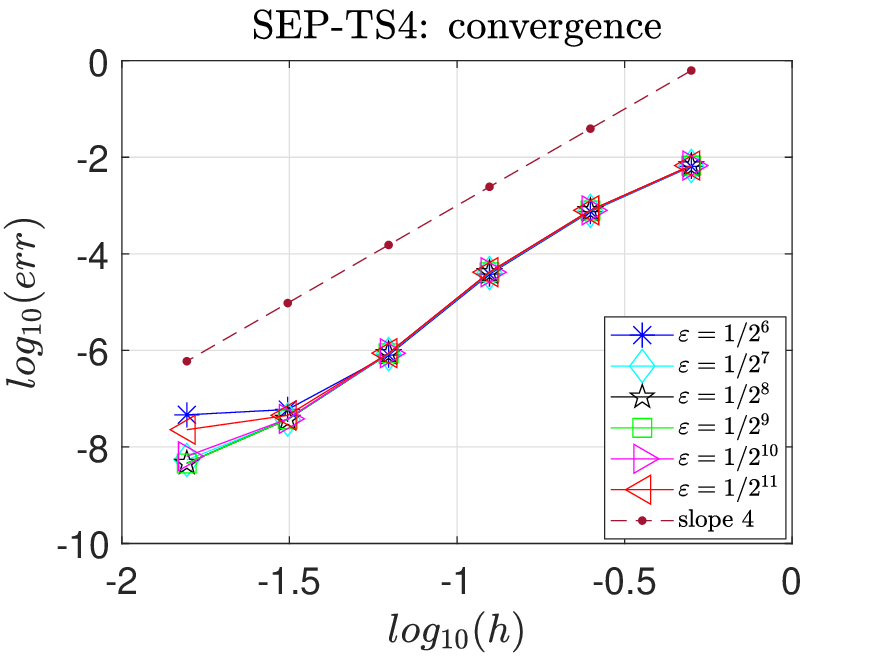,height=3cm,width=6cm}
\psfig{figure=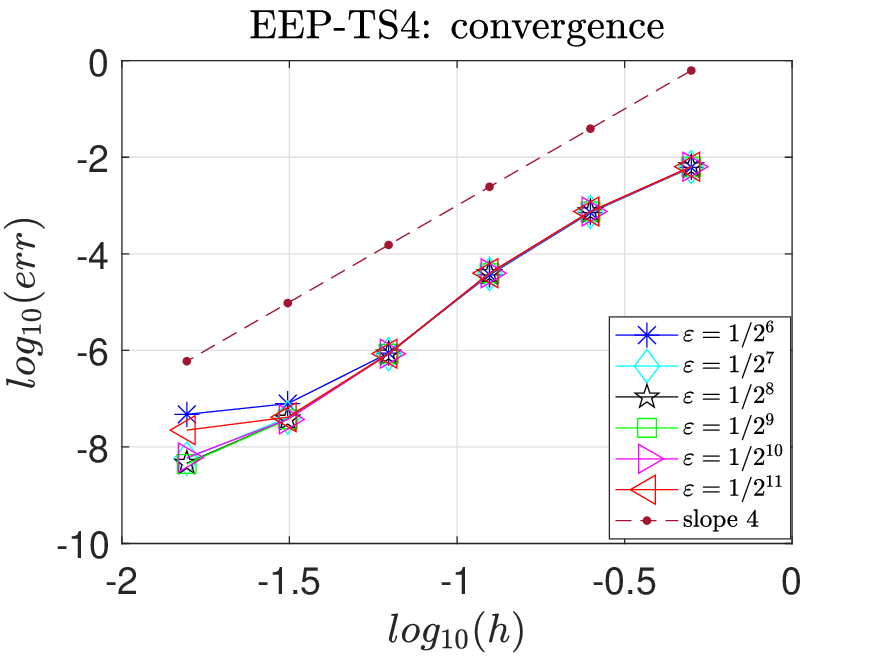,height=3cm,width=6cm}
\end{array}$$
\caption{Problem 3. Temporal error of LDE \eqref{equ-5} in 1D at $t=1$ under different $\eps$.}\label{fig-6}
\end{figure}

\begin{figure}[t!]
$$\begin{array}{cc}
\psfig{figure=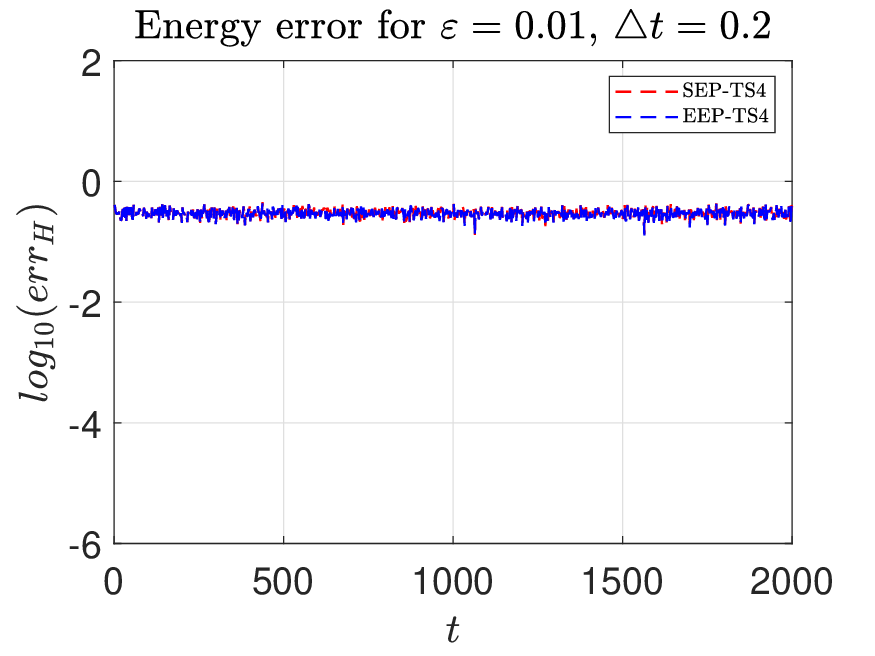,height=3cm,width=5cm}
\psfig{figure=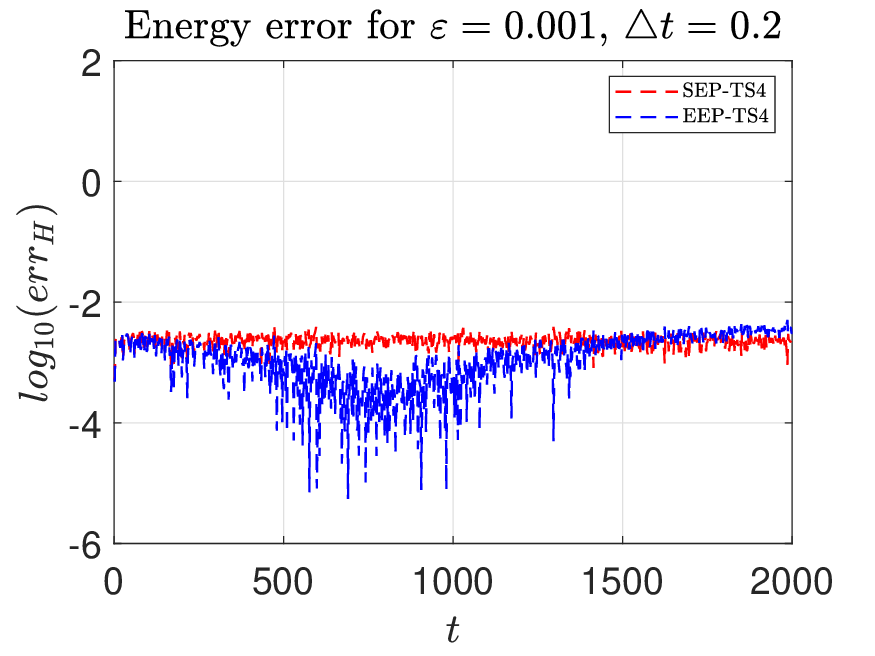,height=3cm,width=5cm}
\psfig{figure=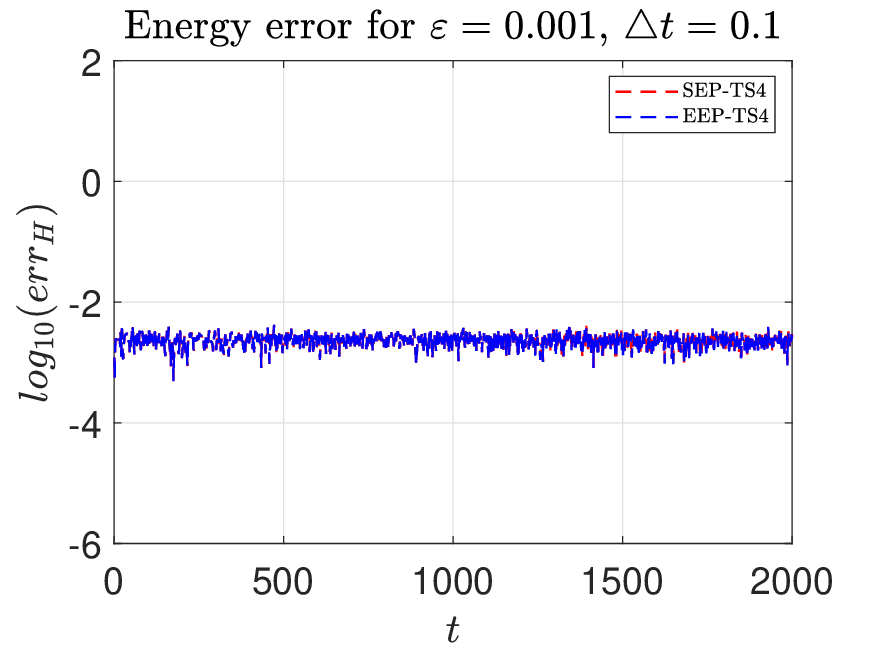,height=3cm,width=5cm}\\
\psfig{figure=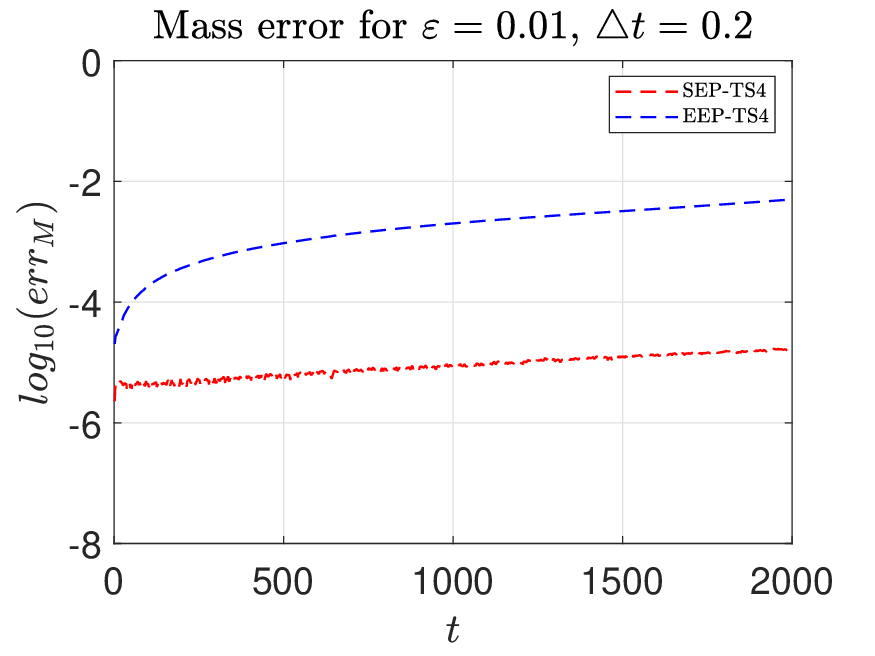,height=3cm,width=5cm}
\psfig{figure=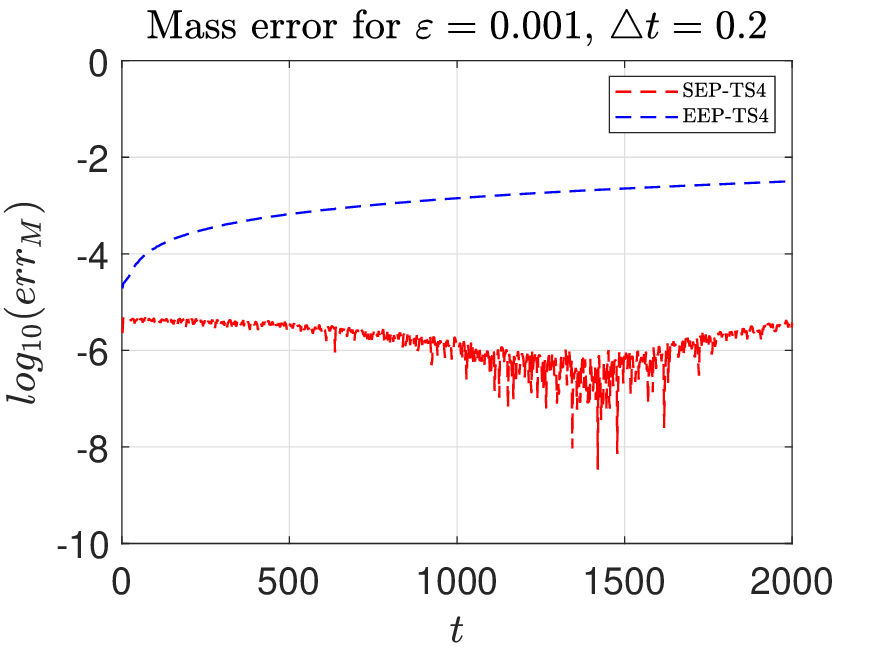,height=3cm,width=5cm}
\psfig{figure=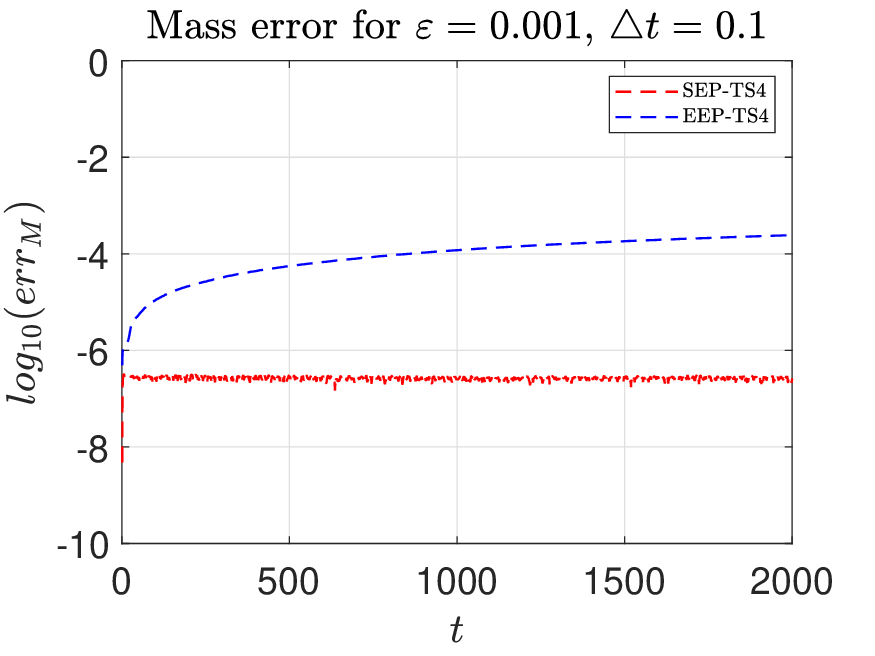,height=3cm,width=5cm}
\end{array}$$
\caption{Problem 3. Energy error (top) and mass error (bottom) of LDE \eqref{equ-5} in 1D  under different $\eps$ and $\triangle t$.}\label{fig-7}
\end{figure}

We choose $d=1$, the bounded domain $\Omega=(-32,32)$, the electric potential and magnetic potential as
$
V(x)=\frac{1-x}{1+x^{2}},  A(x)=\frac{(x+1)^{2}}{1+x^{2}},
$
and the initial data $\Phi_{0}=(\phi_{1},\phi_{2})$ with
$
\phi_{1}(0,x)=\fe^{-\frac{x^{2}}{2}},  \phi_{2}(0,x)=\fe^{-\frac{3(x-1)^{2}}{2}}.
$
The temporal errors  at $t=1$ under different $\eps$ are given in Figure \ref{fig-6} and
the long time energy and mass performances   are shown in Figure \ref{fig-7}.
The numerical phenomena observed are analogous to those of the first two problems.

\subsection{Two-dimensional Dirac equation}\label{sec-2d}\
This subsection is devoted to the numerical experiments on the two-dimensional Dirac equation.

\vspace{0.15cm} \textbf{Problem 4. Nonlinear case.}
In the following numerical experiments, we take $\Omega=(-16,16)^{2}$, $\triangle x=1/8$, $N_{\tau}=2^{6}$, and a honeycomb lattice potential
\begin{align}\label{equ-3-2-1}
V(\mathbf{x})=\cos\left(\frac{4\pi}{\sqrt{3}}\mathbf{e}_{1}\cdot\mathbf{x}\right)+\cos\left(\frac{4\pi}{\sqrt{3}}\mathbf{e}_{2}\cdot\mathbf{x}\right)
+\cos\left(\frac{4\pi}{\sqrt{3}}\mathbf{e}_{3}\cdot\mathbf{x}\right), \ \mathbf{x}\in\mathbb{R}^{2}, \ t\geq 0,
\end{align}
where $\mathbf{e}_{1}=(-1,0)^{\intercal}$, $\mathbf{e}_{2}=(1/2,\sqrt{3}/2)^{\intercal}$, $\mathbf{e}_{3}=(1/2,-\sqrt{3}/2)^{\intercal}$. Choose the initial data $\Phi_{0}(\mathbf{x})=\left(\phi_{1}(\mathbf{x}),\phi_{2}(\mathbf{x})\right)^{\intercal}$ as
 $
\phi_{1}(\mathbf{x})=\fe^{-\frac{x^{2}+y^{2}}{2}},   \phi_{2}(\mathbf{x})=\fe^{-\frac{(x-1)^{2}+y^{2}}{2}},  \mathbf{x}=(x,y)^{\intercal} \in\mathbb{R}^{2}.$

\begin{figure}[t!]
$$\begin{array}{cc}
\psfig{figure=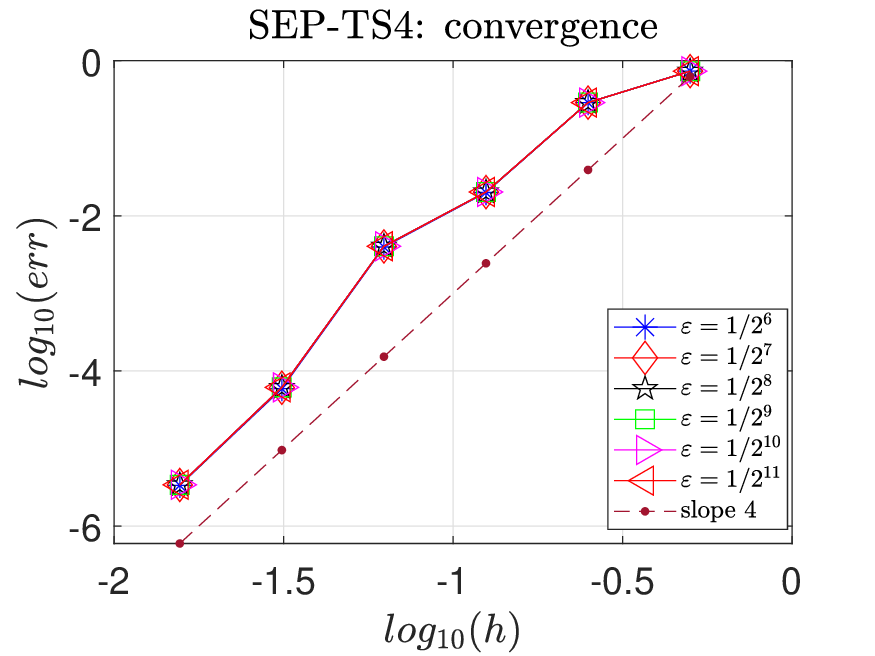,height=3cm,width=6cm}
\psfig{figure=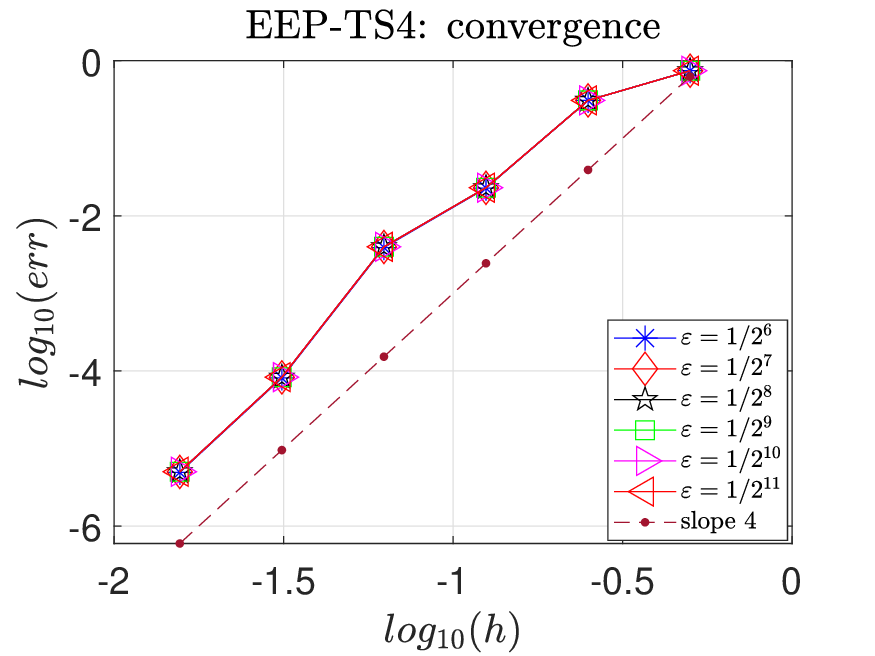,height=3cm,width=6cm}
\end{array}$$
\caption{Problem 4. Temporal error of NLDE \eqref{equ-1} in 2D at $t=1$ under different $\eps$.}\label{fig-3-10-1}
\end{figure}

\begin{figure}[t!]
$$\begin{array}{cc}
\psfig{figure=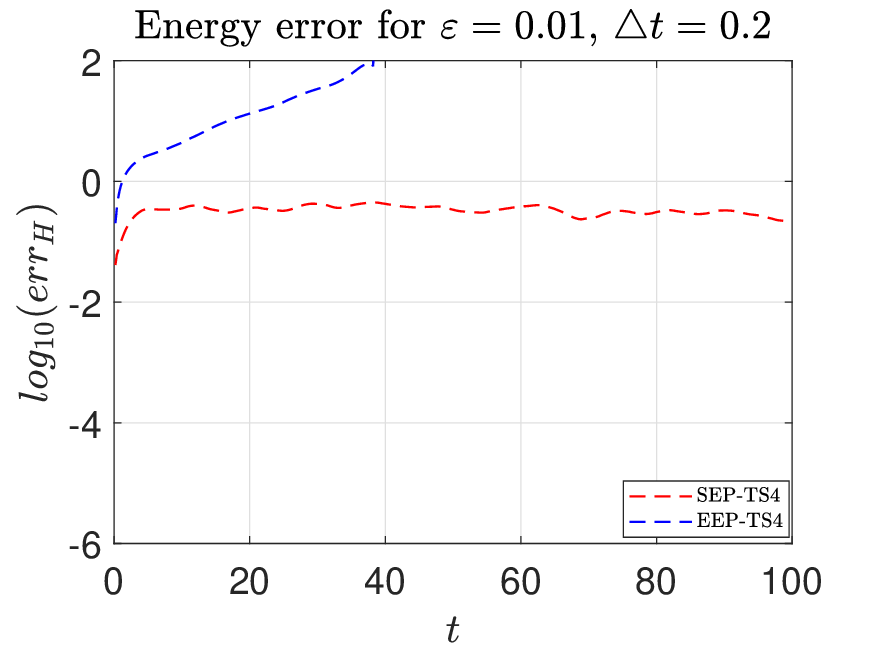,height=3cm,width=5cm}
\psfig{figure=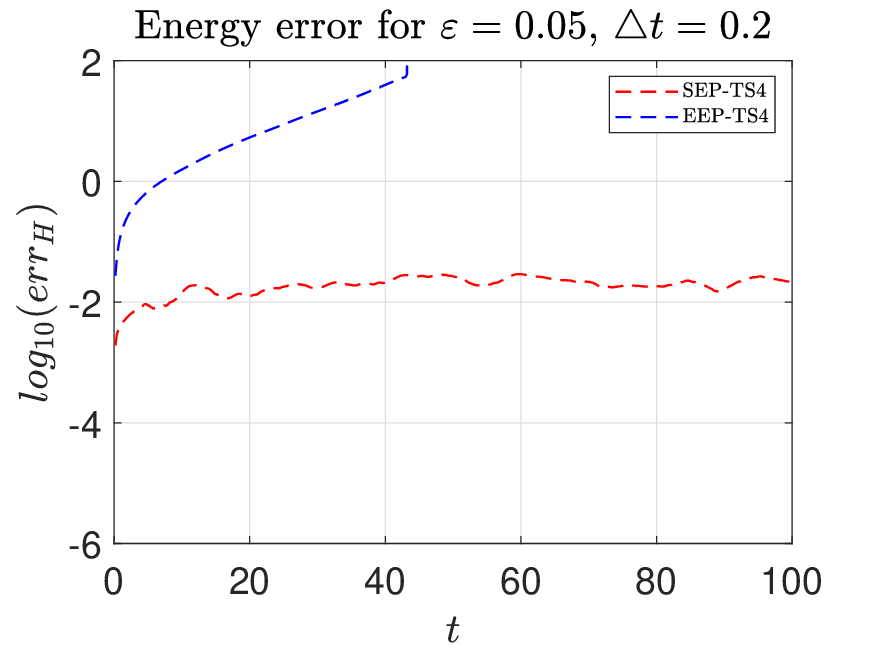,height=3cm,width=5cm}
\psfig{figure=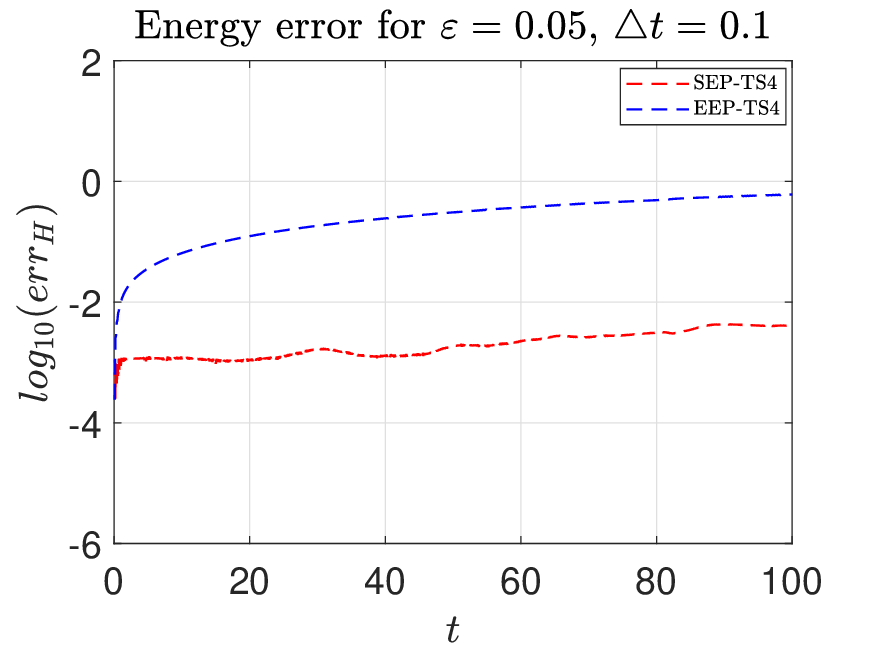,height=3cm,width=5cm}\\
\psfig{figure=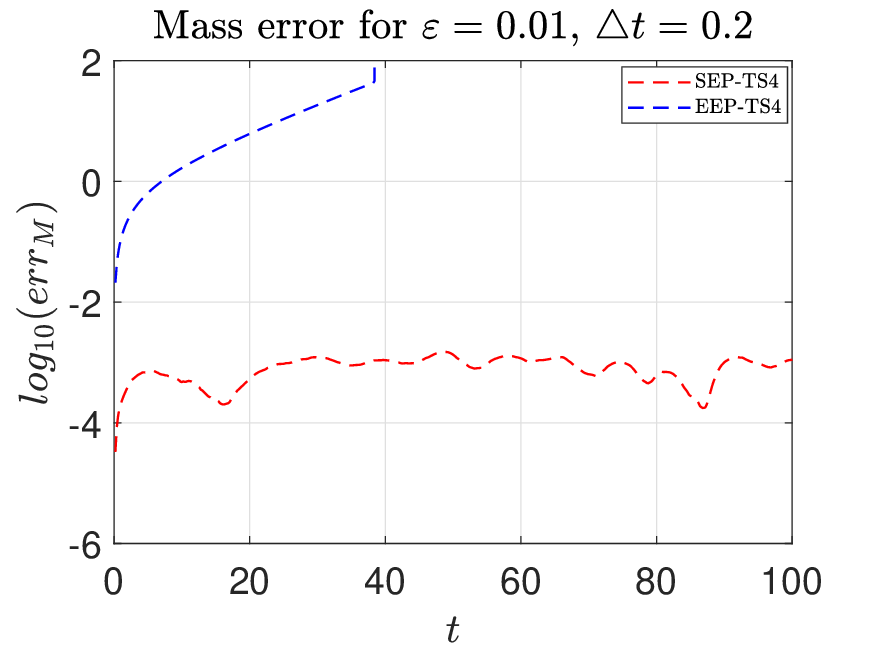,height=3cm,width=5cm}
\psfig{figure=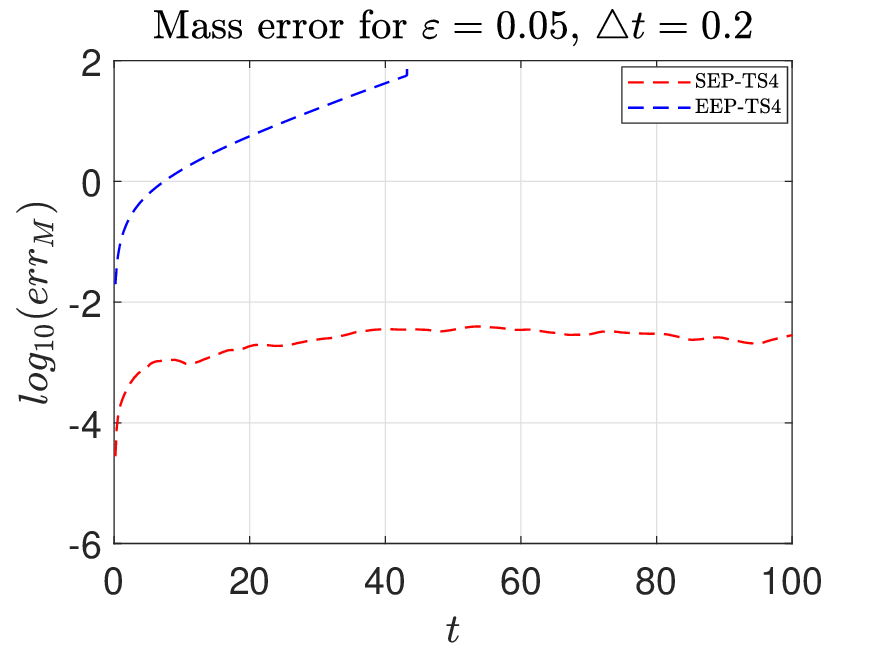,height=3cm,width=5cm}
\psfig{figure=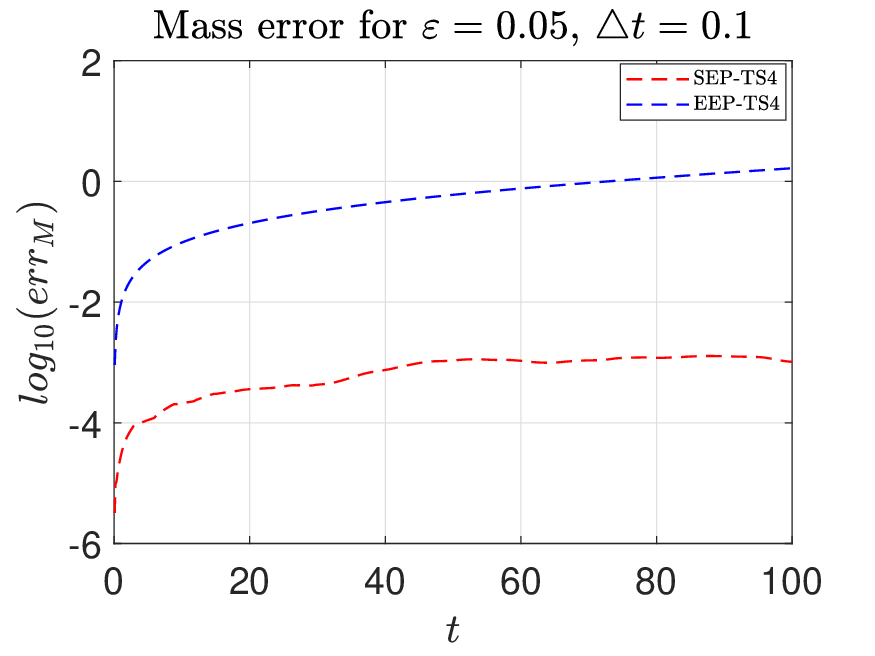,height=3cm,width=5cm}
\end{array}$$
\caption{Problem 4. Energy error (top) and mass error (bottom) of NLDE \eqref{equ-1} in 2D  under different $\eps$ and $\triangle t$.}\label{fig-3-10-2}
\end{figure}
\begin{figure}[t!]
$$\begin{array}{cc}
\psfig{figure=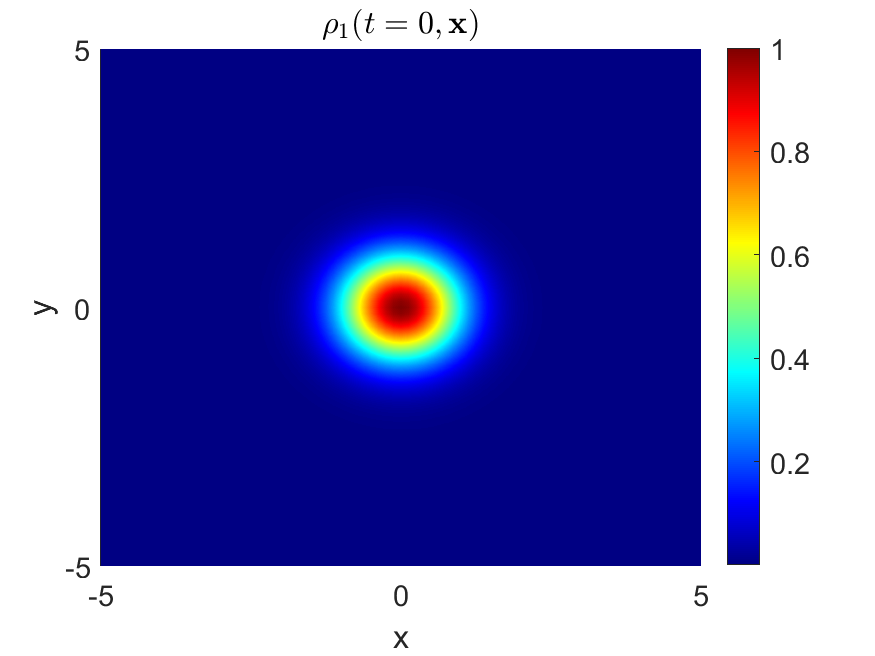,height=2.8cm,width=3.5cm}
\psfig{figure=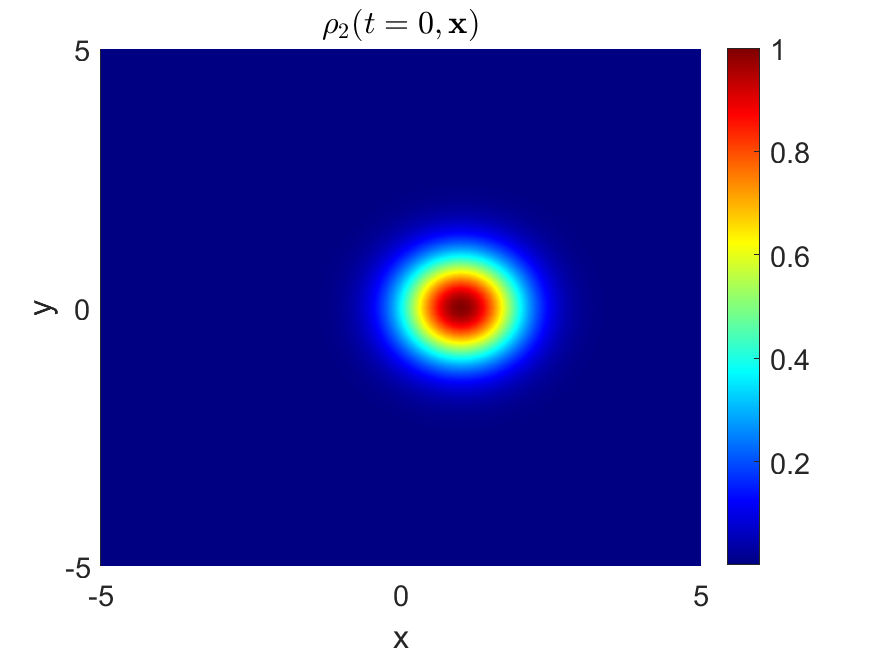,height=2.8cm,width=3.5cm}
\psfig{figure=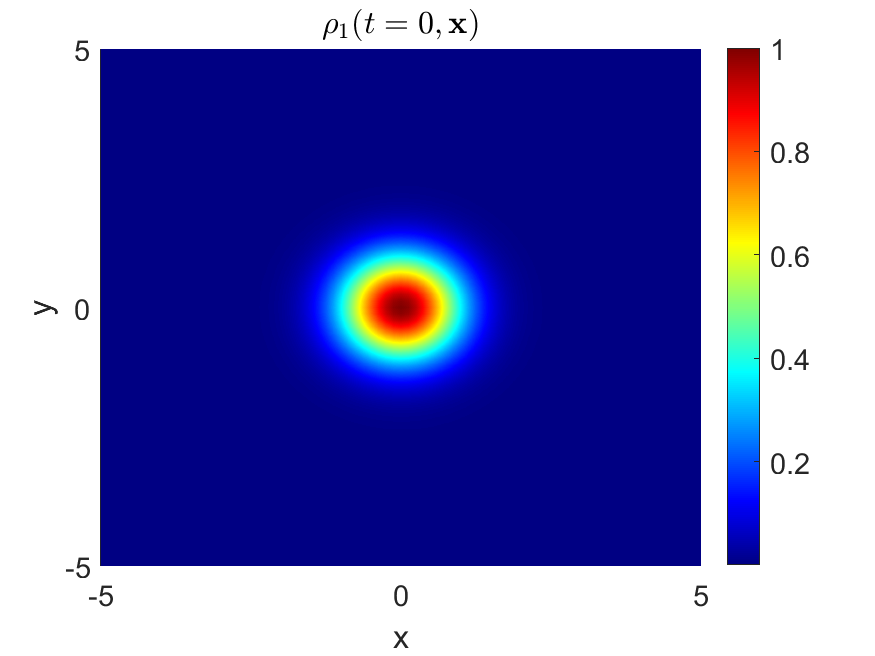,height=2.8cm,width=3.5cm}
\psfig{figure=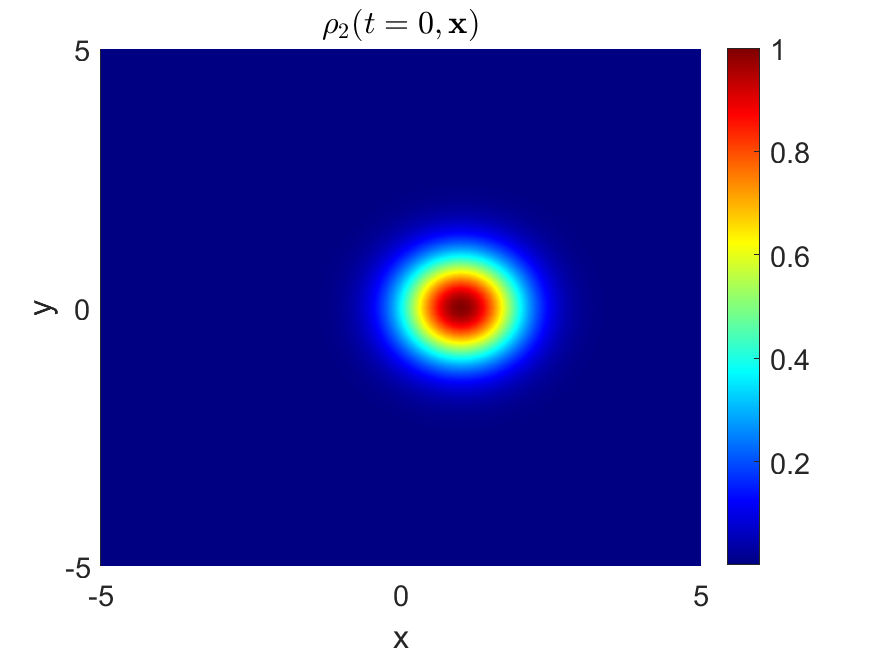,height=2.8cm,width=3.5cm}\\
\psfig{figure=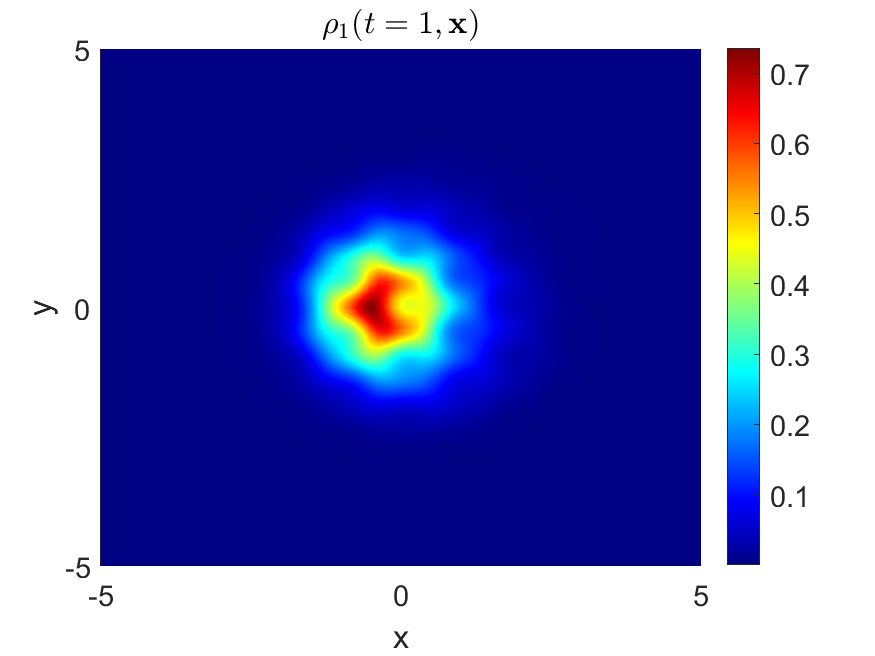,height=2.8cm,width=3.5cm}
\psfig{figure=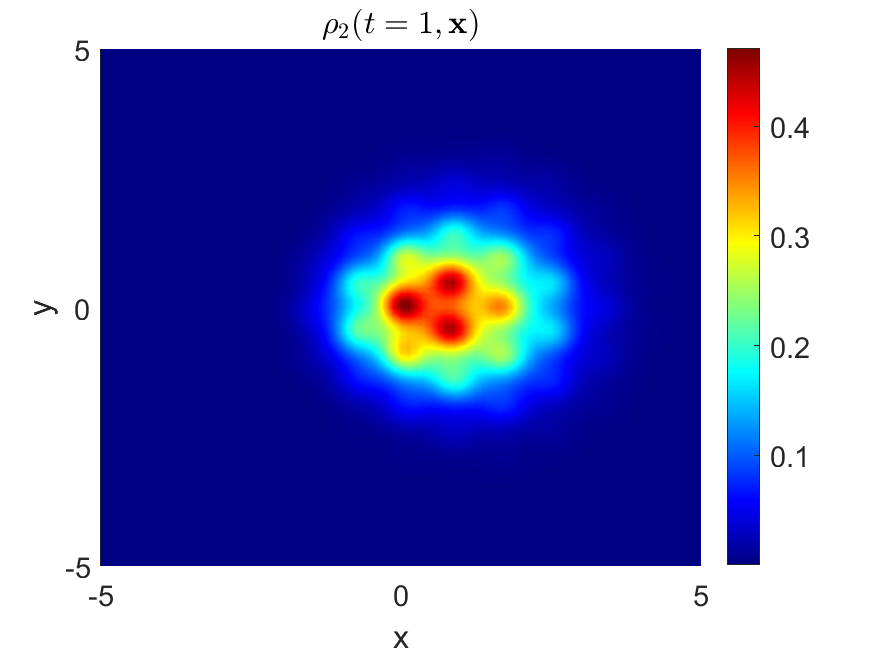,height=2.8cm,width=3.5cm}
\psfig{figure=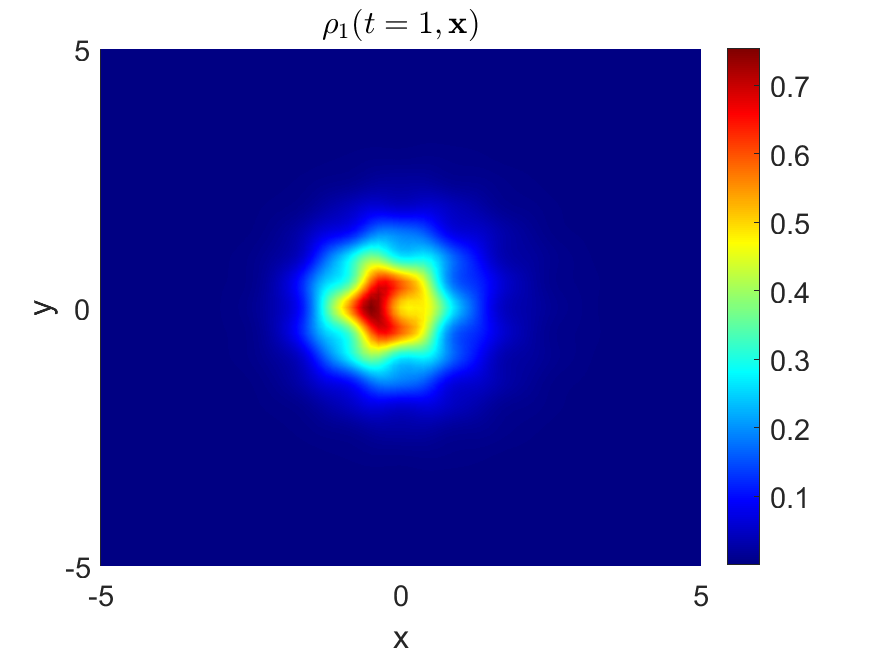,height=2.8cm,width=3.5cm}
\psfig{figure=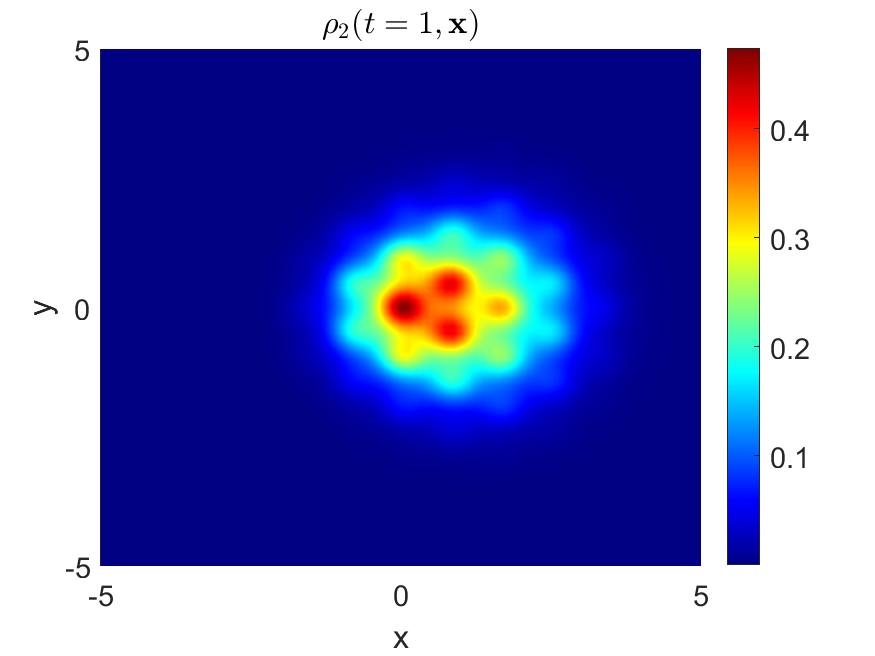,height=2.8cm,width=3.5cm}\\
\psfig{figure=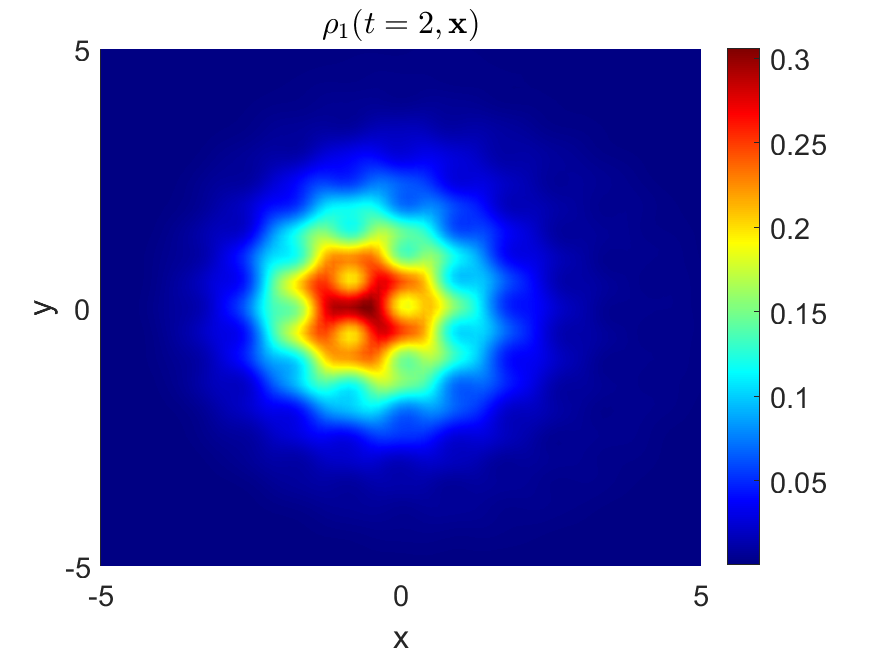,height=2.8cm,width=3.5cm}
\psfig{figure=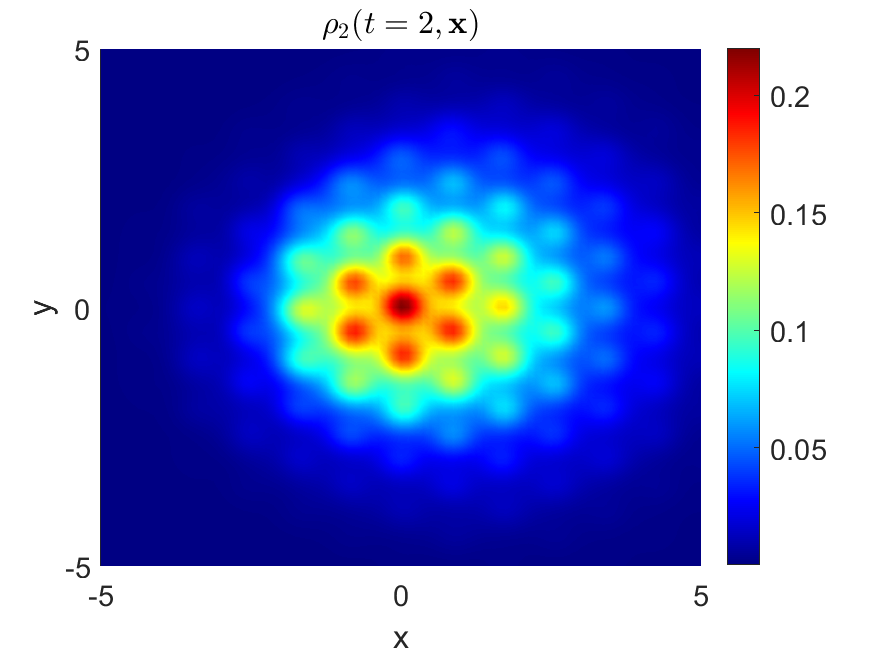,height=2.8cm,width=3.5cm}
\psfig{figure=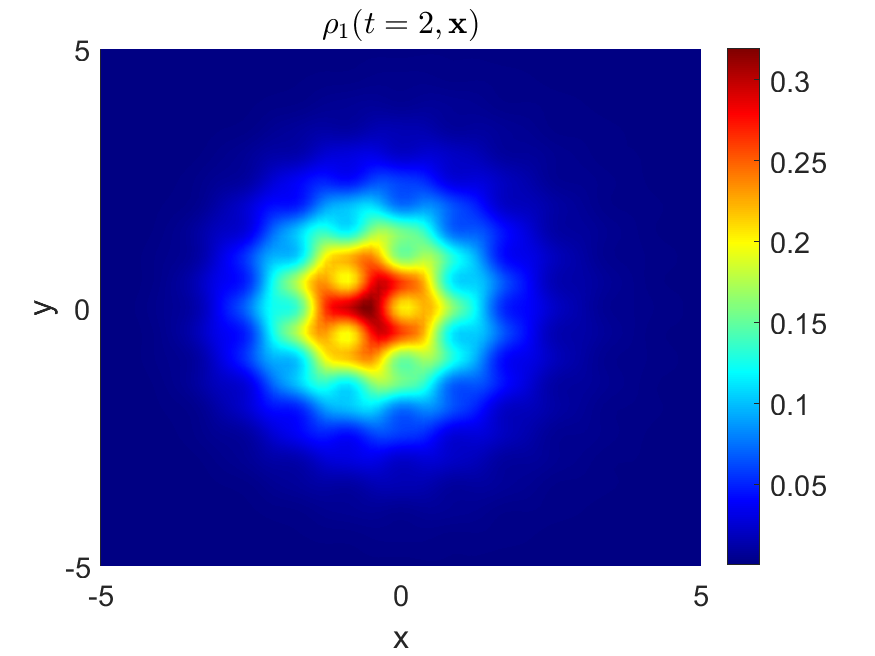,height=2.8cm,width=3.5cm}
\psfig{figure=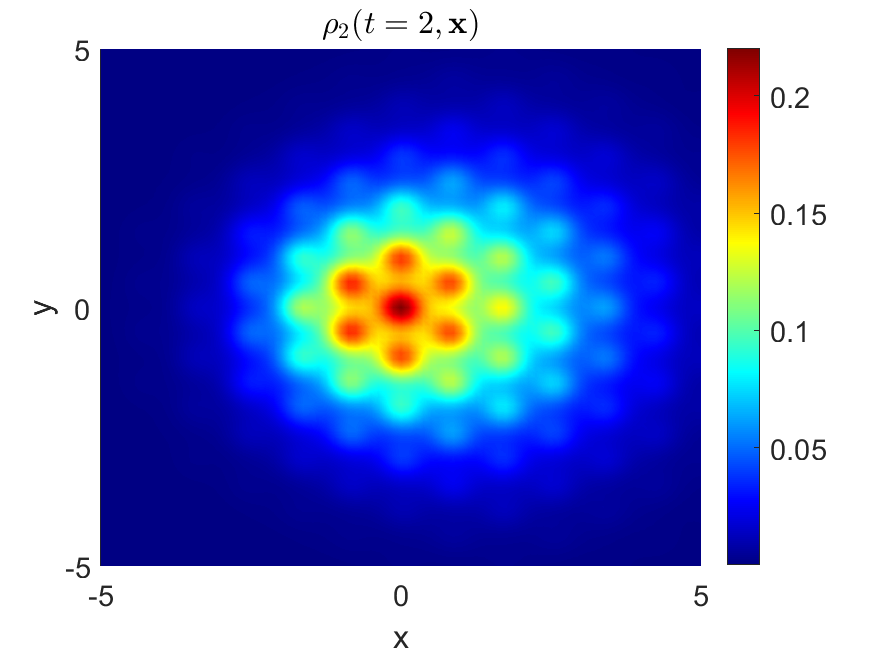,height=2.8cm,width=3.5cm}\\
\psfig{figure=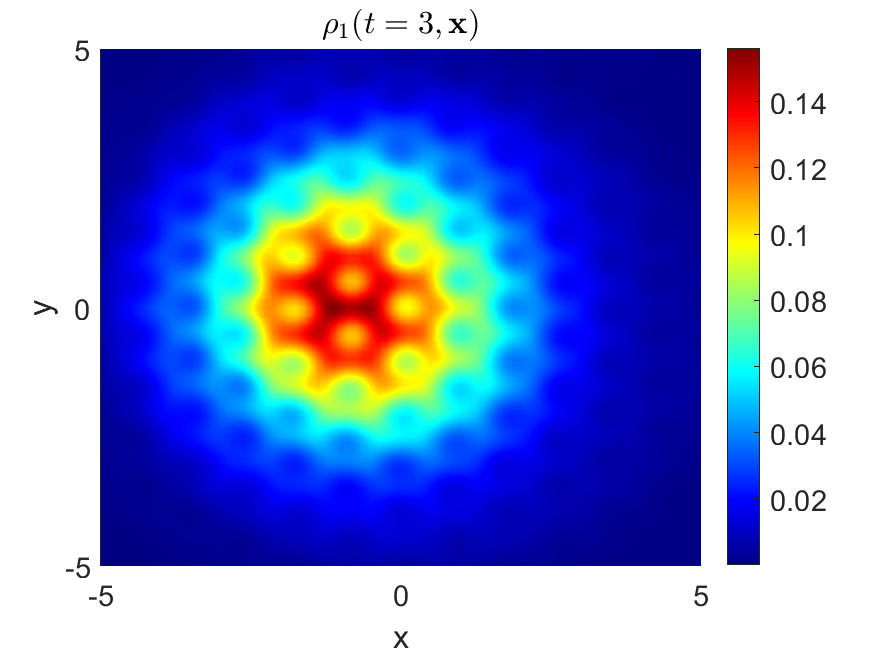,height=2.8cm,width=3.5cm}
\psfig{figure=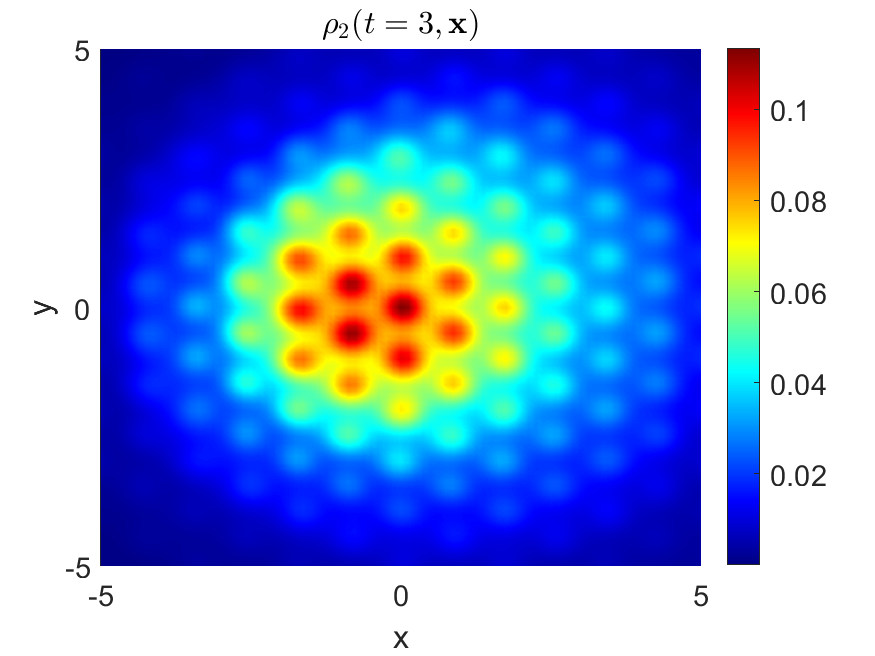,height=2.8cm,width=3.5cm}
\psfig{figure=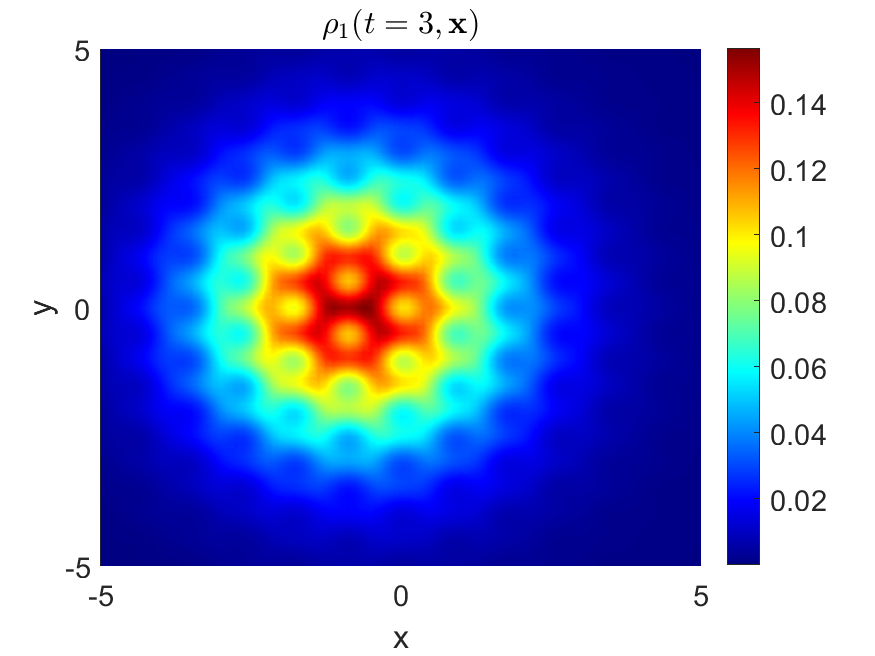,height=2.8cm,width=3.5cm}
\psfig{figure=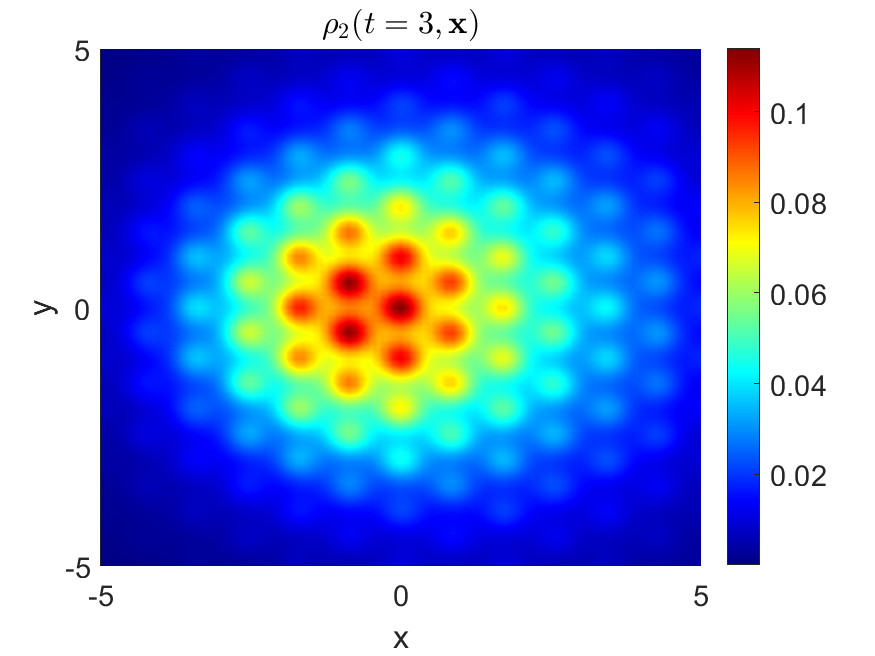,height=2.8cm,width=3.5cm}\\
\end{array}$$
\caption{Problem 4. Contour plots of the densities $\rho_{i}(t,\mathbf{x})=|\phi_{i}(t,\mathbf{x})|^{2},\ i=1,2$ for $\eps=0.1 \ (left \ two \ columns)$ and $\eps=0.01 \ (right \ two \ columns)$ of NLDE \eqref{equ-1} in 2D under the nonlinearity $\lambda_{1}=-1$, $\lambda_{2}=0$ with a honeycomb potential.}\label{fig-3-1-2}
\end{figure}

Firstly,  Figures \ref{fig-3-10-1} and \ref{fig-3-10-2} respectively display the temporal errors and long-term performance of the 2D Dirac equation \eqref{equ-1}. The reference solution is given by the 2D Strang splitting method with $\triangle t=10^{-5}, h=1/8$.
Fourth order uniform accuracy   and long time  behaviour  are clearly observed.

Then we give the dynamics of the nonlinear Dirac equation \eqref{equ-1} in 2D (d=2) by SEP-TS4 with $\triangle t=0.01$.
Figures \ref{fig-3-1-2} and \ref{fig-3-1-3} illustrate the densities $\rho_{j}(t,\mathbf{x})=|\phi_{j}(t,\mathbf{x})|^{2}, j=1,2$, for $\eps=0.1$ and $\eps=0.01$ under varying nonlinearities. As observed, for both $\eps=0.1$ and $\eps=0.01$, the densities exhibit a smoother distribution across the lattice potential, and the dynamics closely resemble those of the Schr\"{o}dinger-type limiting system.

\begin{figure}[t!]
$$\begin{array}{cc}
\psfig{figure=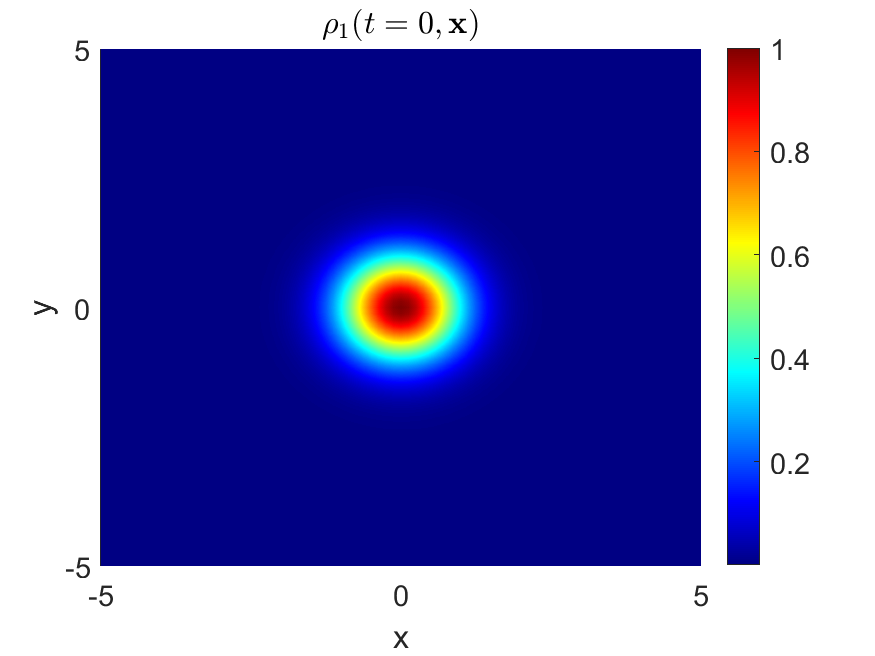,height=2.8cm,width=3.5cm}
\psfig{figure=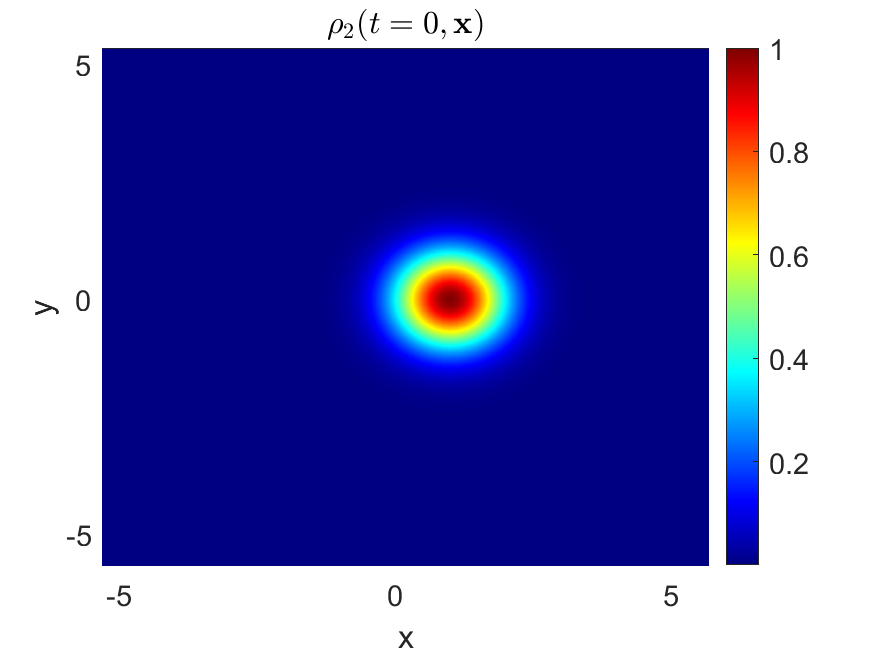,height=2.8cm,width=3.5cm}
\psfig{figure=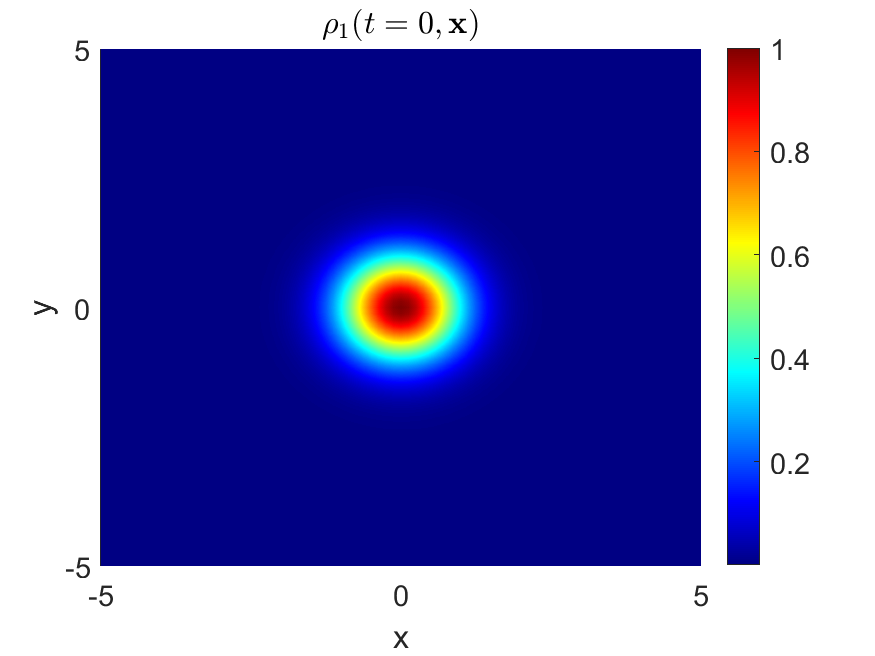,height=2.8cm,width=3.5cm}
\psfig{figure=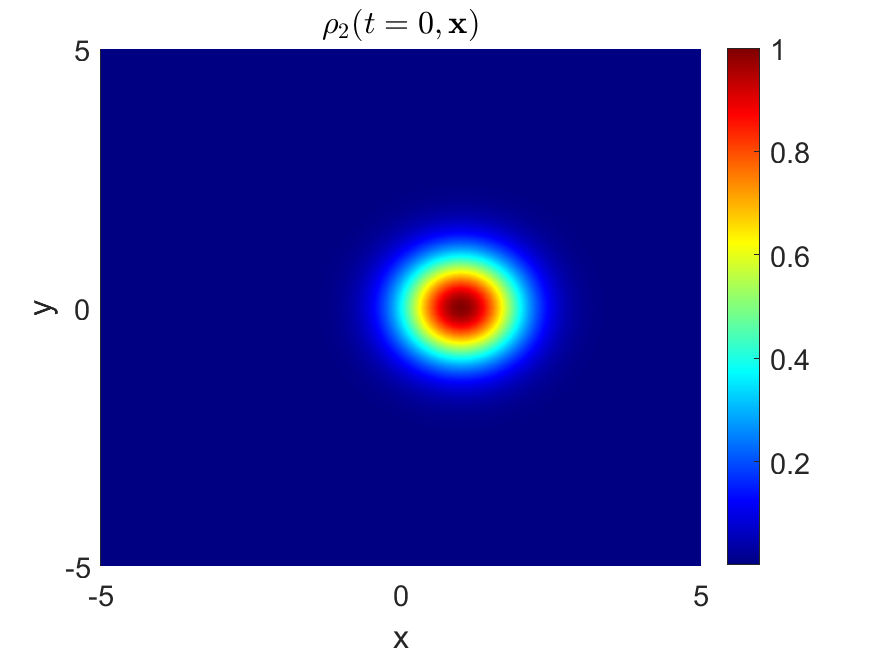,height=2.8cm,width=3.5cm}\\
\psfig{figure=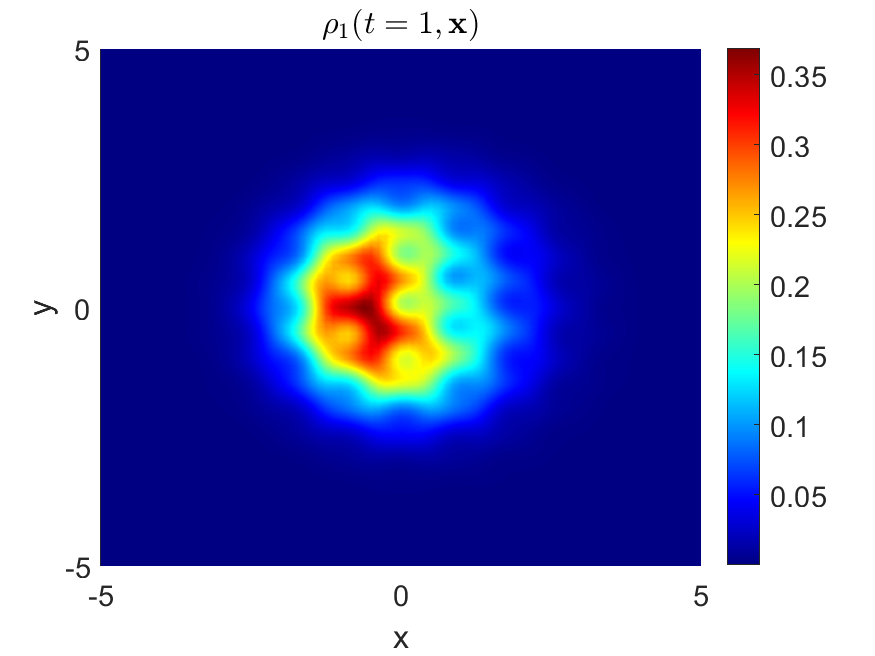,height=2.8cm,width=3.5cm}
\psfig{figure=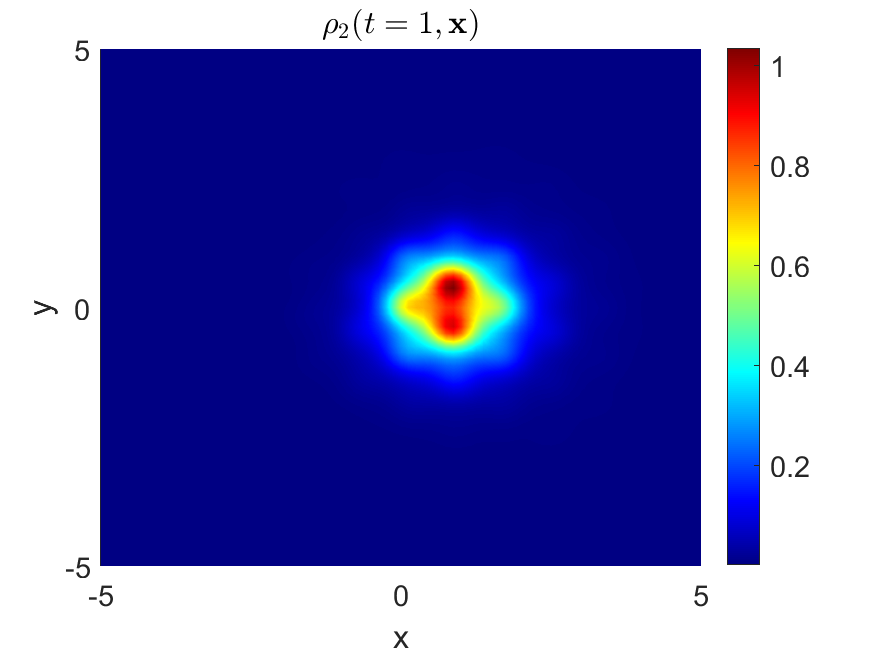,height=2.8cm,width=3.5cm}
\psfig{figure=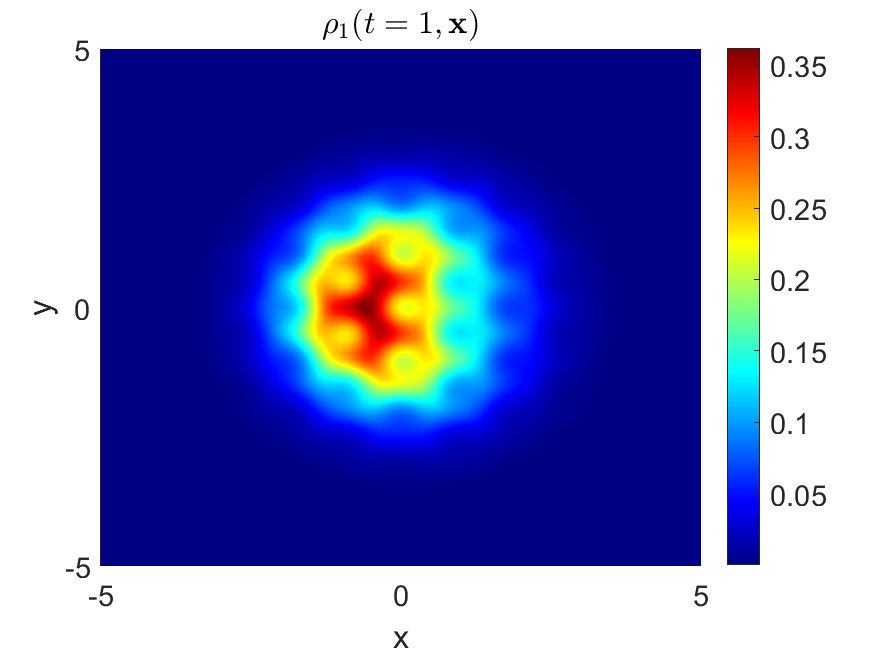,height=2.8cm,width=3.5cm}
\psfig{figure=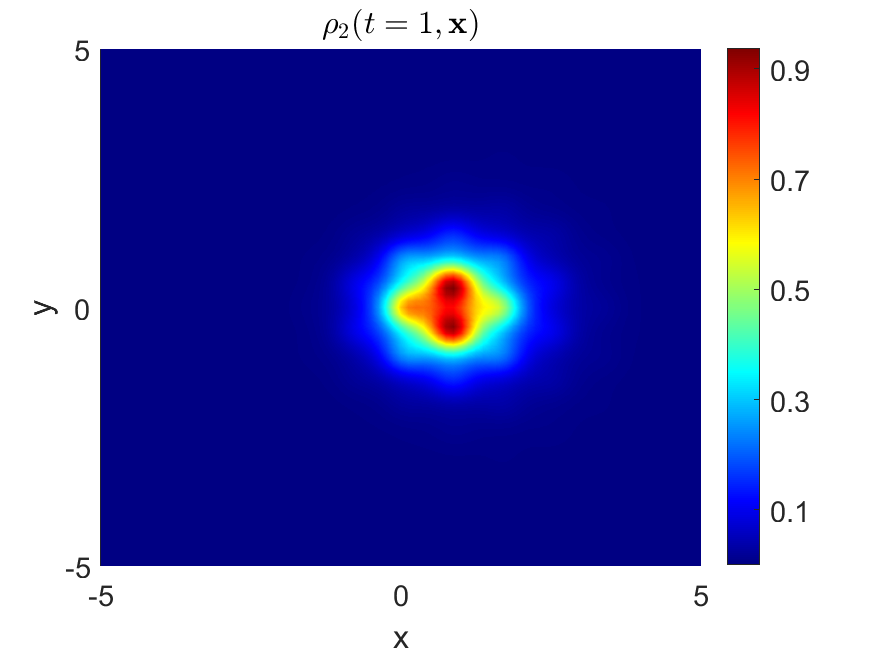,height=2.8cm,width=3.5cm}\\
\psfig{figure=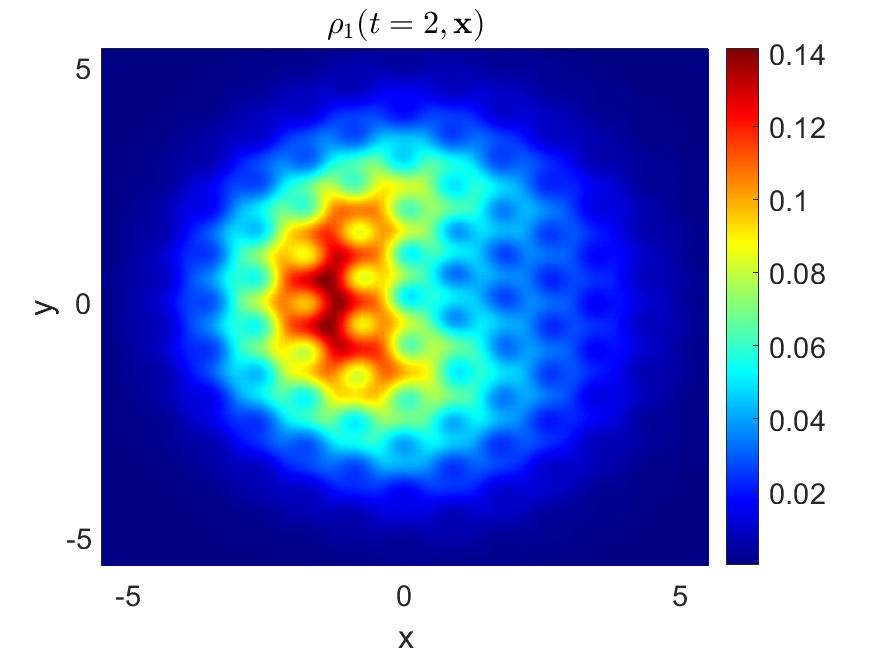,height=2.8cm,width=3.5cm}
\psfig{figure=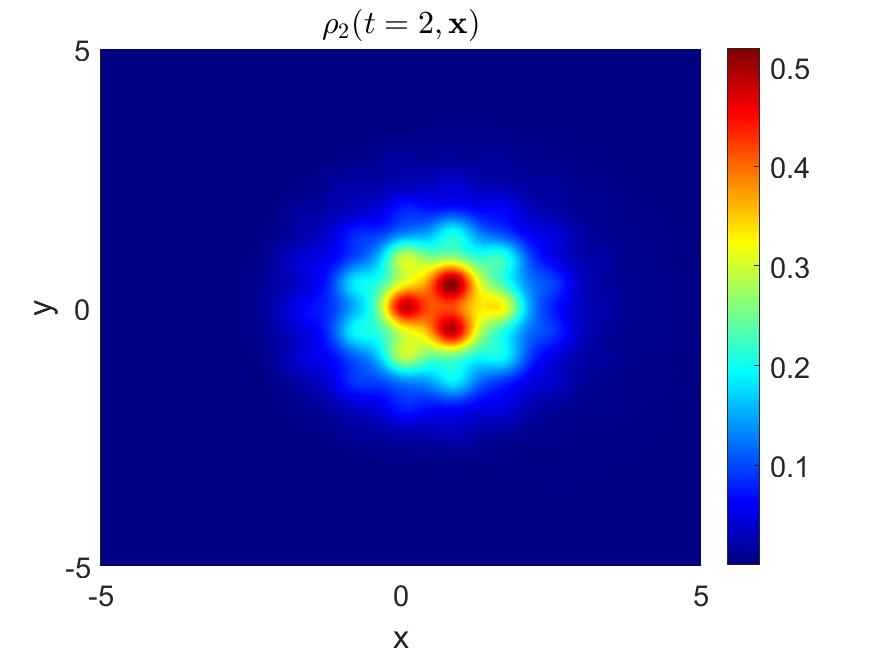,height=2.8cm,width=3.5cm}
\psfig{figure=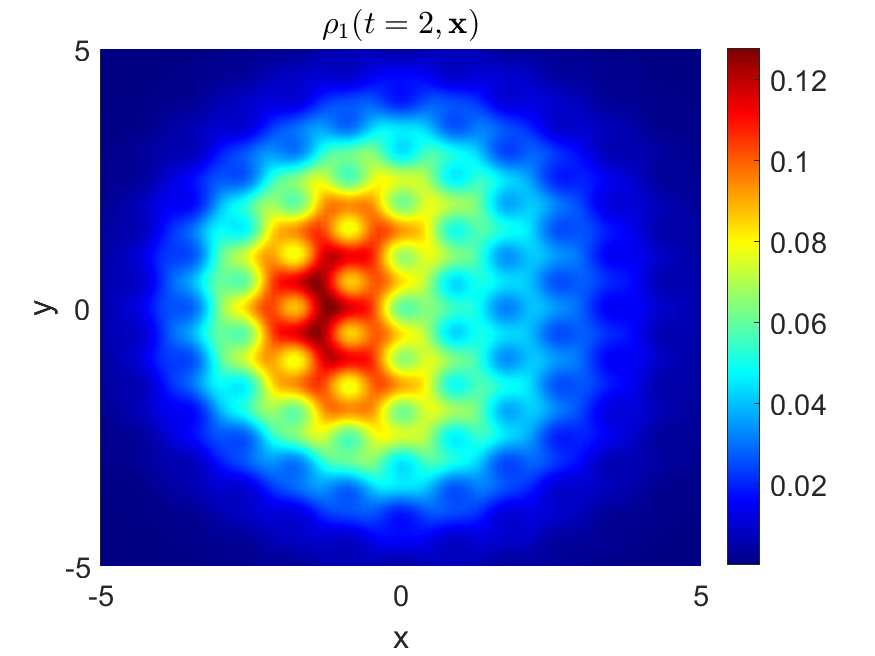,height=2.8cm,width=3.5cm}
\psfig{figure=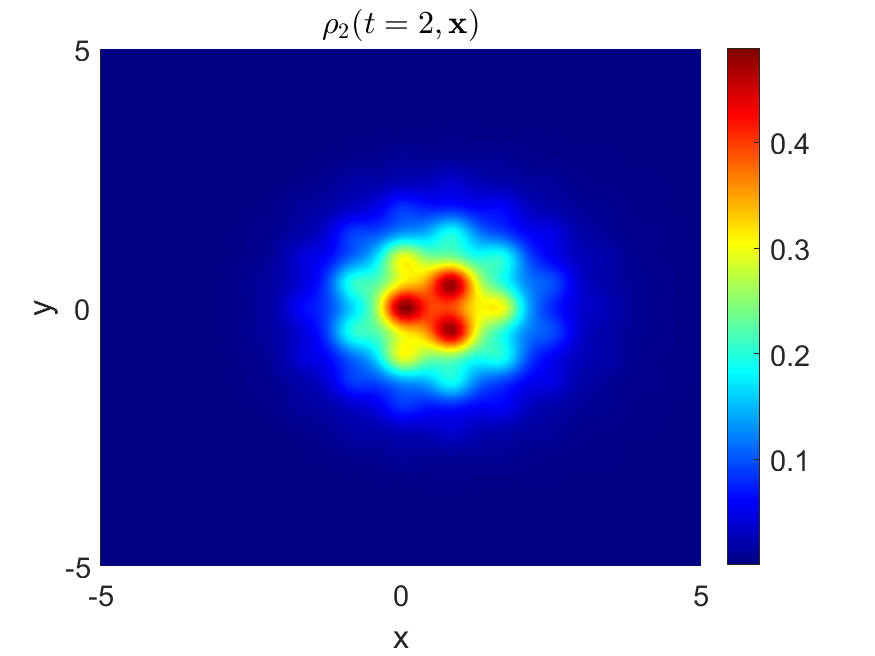,height=2.8cm,width=3.5cm}\\
\psfig{figure=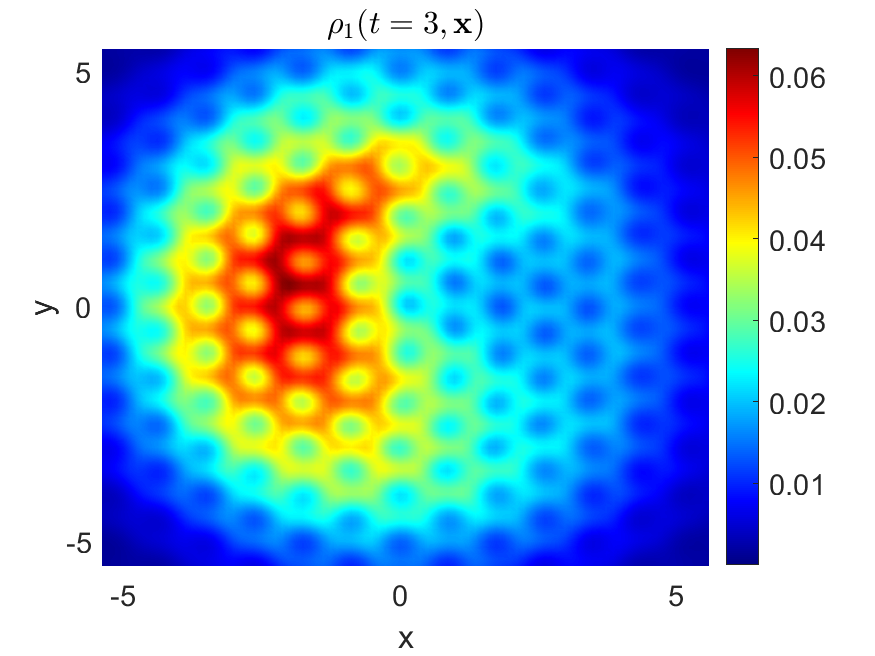,height=2.8cm,width=3.5cm}
\psfig{figure=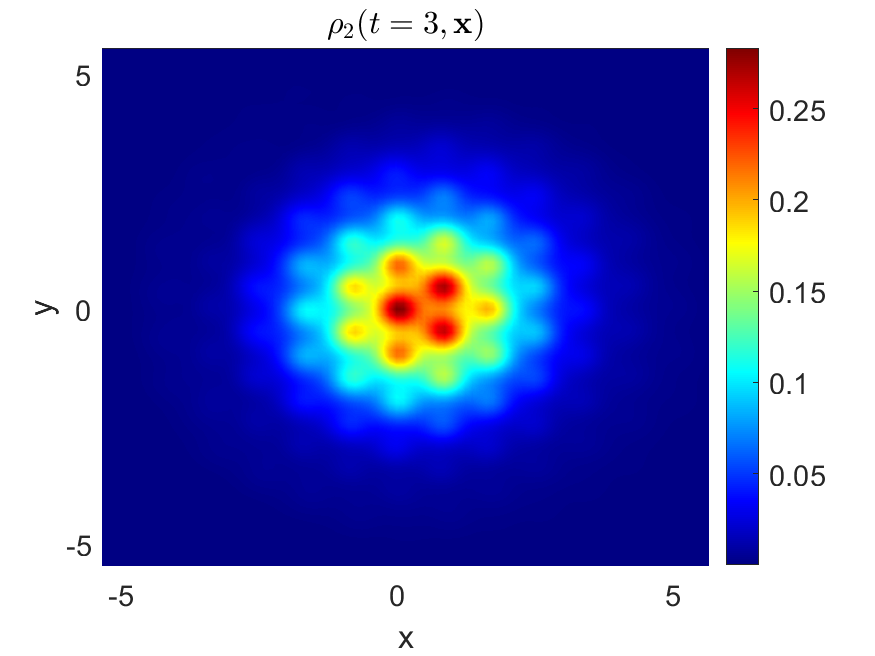,height=2.8cm,width=3.5cm}
\psfig{figure=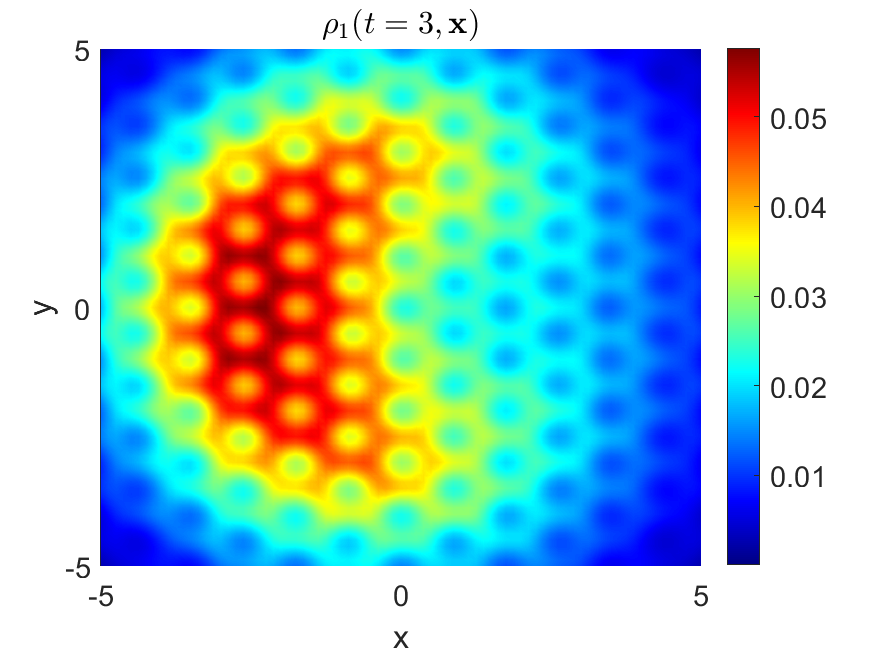,height=2.8cm,width=3.5cm}
\psfig{figure=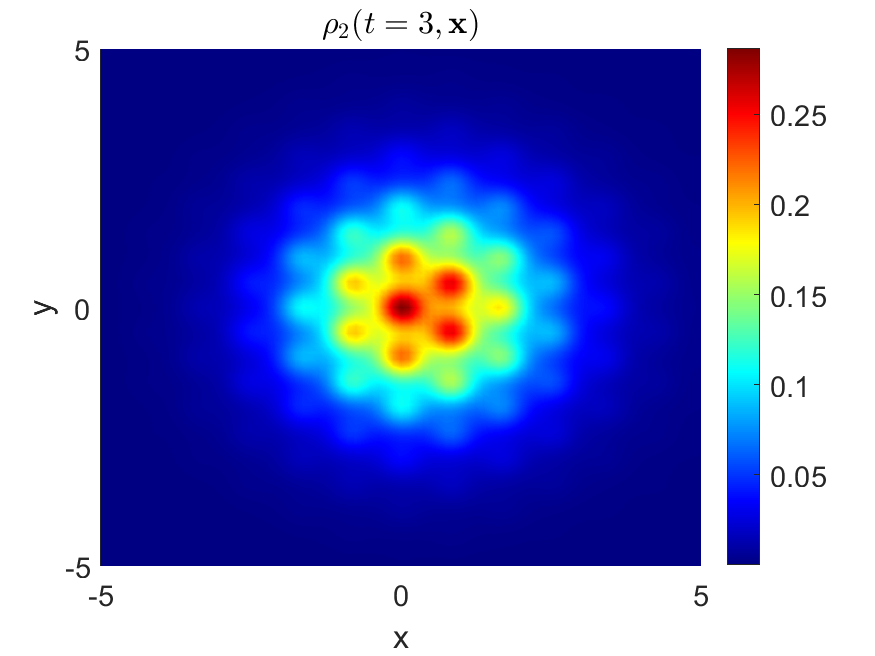,height=2.8cm,width=3.5cm}\\
\end{array}$$
\caption{Problem 4. Contour plots of the densities $\rho_{i}(t,\mathbf{x})=|\phi_{i}(t,\mathbf{x})|^{2},\ i=1,2$ for $\eps=0.1 \ (left \ two \ columns)$ and $\eps=0.01 \ (right \ two \ columns)$ of NLDE \eqref{equ-1} in 2D under the nonlinearity $\lambda_{1}=0$, $\lambda_{2}=1$ with a honeycomb potential.}\label{fig-3-1-3}
\end{figure}

 \vspace{0.15cm}\textbf{Problem 5. Linear case with magnetic potential.}
As the last numerical test,  we study numerically the dynamics of the linear Dirac equation \eqref{equ-5} with a honeycomb lattice potential, i.e. we take $d=2$, $A_{1}(\mathbf{x})=A_{2}(\mathbf{x})\equiv 0$, $V(\mathbf{x})$ and initial data $\Phi_{0}(\mathbf{x})$ are still taken as before. The problem is solved numerically on $\Omega=[-10,10]^{2}$ with mesh size $\triangle x=1/16$, $N_{\tau}=2^{6}$. Firstly we test the time error of LDE \eqref{equ-5} and the long-term performance, and the corresponding results are presented in Figures \ref{fig-3-10-3}-\ref{fig-3-10-4}.
Figure \ref{fig-3-1-5} depicts the densities $\rho_{j}(t,\mathbf{x})=|\phi_{j}(t,\mathbf{x})|^{2} (j=1,2)$ for $\eps=0.2$ and $\eps=0.01$ by the SEP-TS4 schemes with time step $\triangle t=0.01$.
As $\eps\rightarrow 0^{+}$, the relativistic effects gradually vanish, and the Dirac equation asymptotically reduces to the Schr\"{o}dinger equations, leading to a smoother spatial distribution of the densities across the lattice potential.
\begin{figure}[t!]
$$\begin{array}{cc}
\psfig{figure=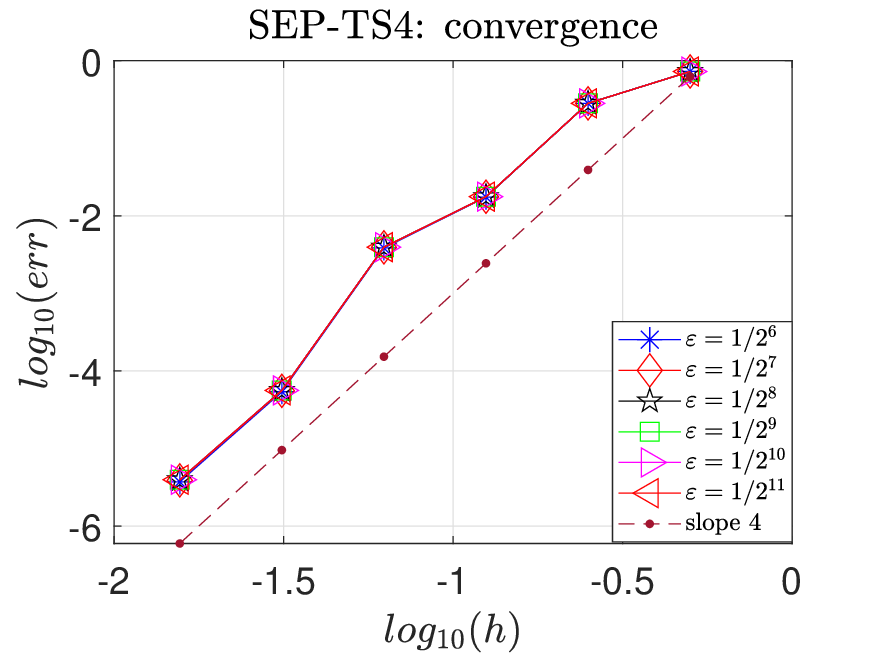,height=3cm,width=6cm}
\psfig{figure=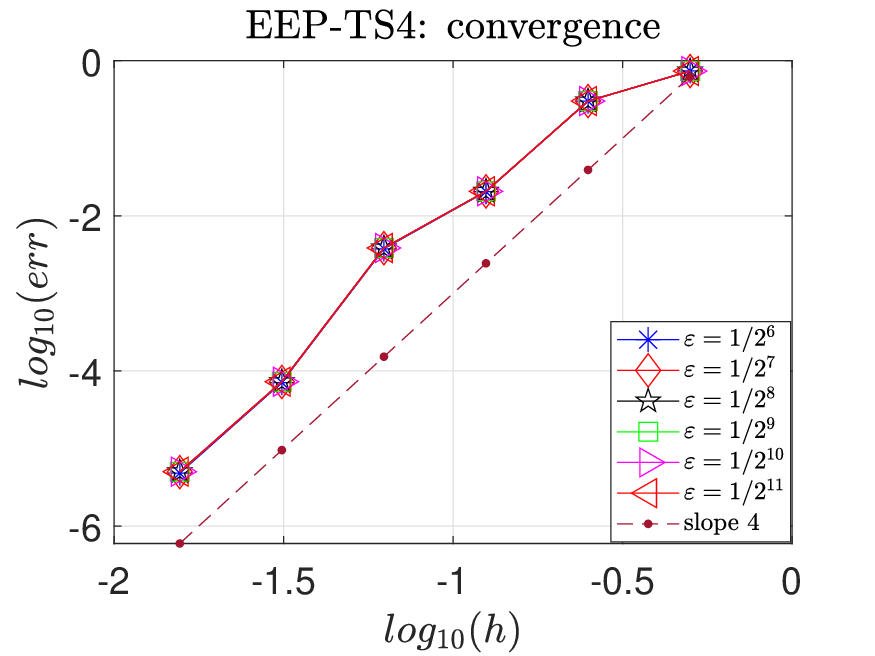,height=3cm,width=6cm}
\end{array}$$
\caption{Problem 5. Temporal error of LDE \eqref{equ-5} in 2D at $t=1$ under different $\eps$.}\label{fig-3-10-3}
\end{figure}

\begin{figure}[t!]
$$\begin{array}{cc}
\psfig{figure=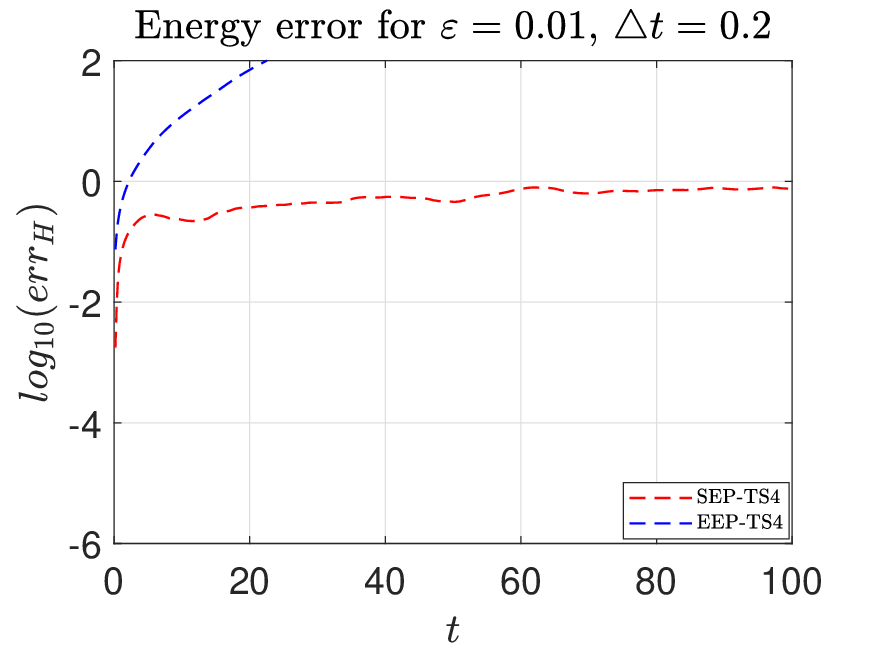,height=3cm,width=5cm}
\psfig{figure=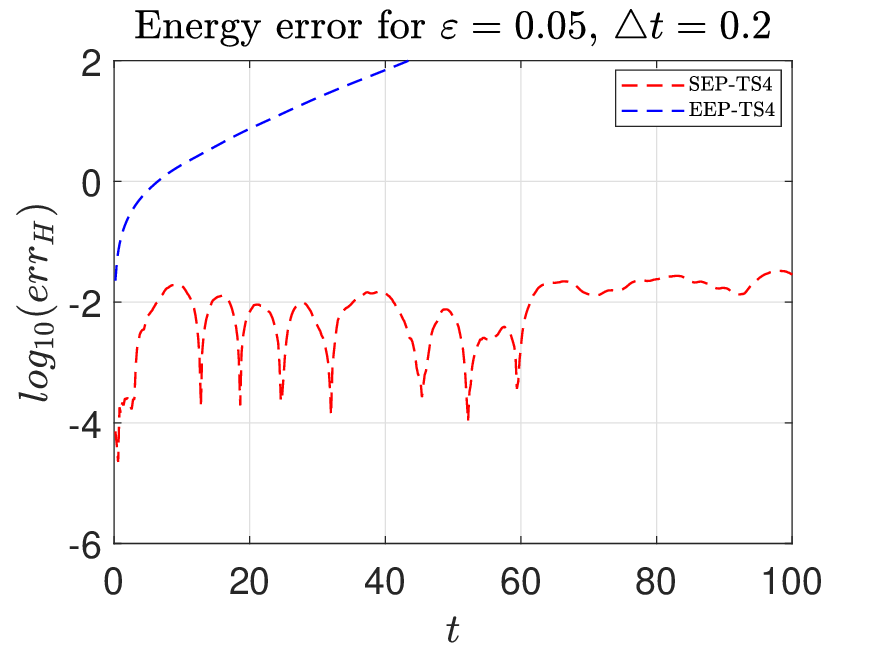,height=3cm,width=5cm}
\psfig{figure=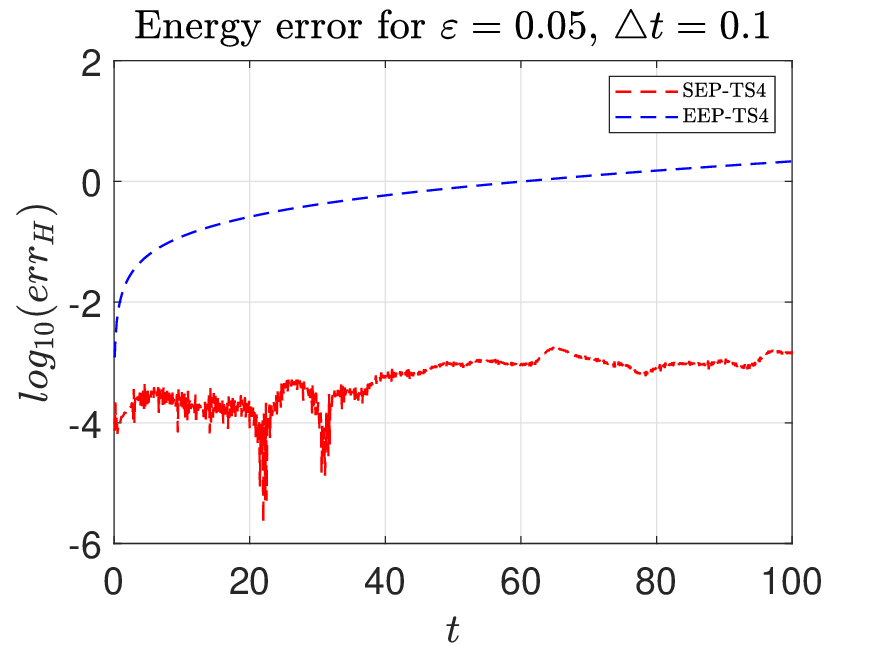,height=3cm,width=5cm}\\
\psfig{figure=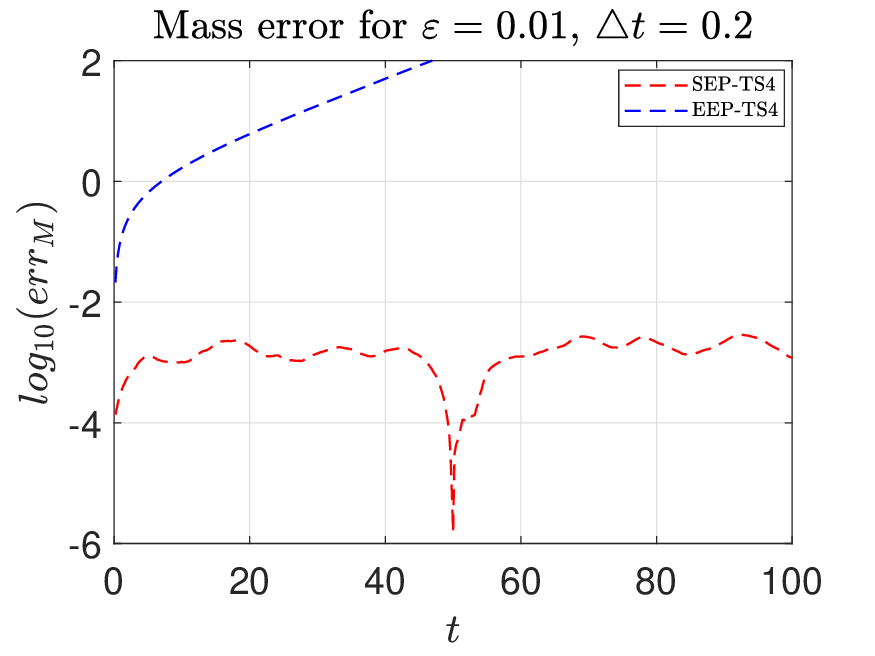,height=3cm,width=5cm}
\psfig{figure=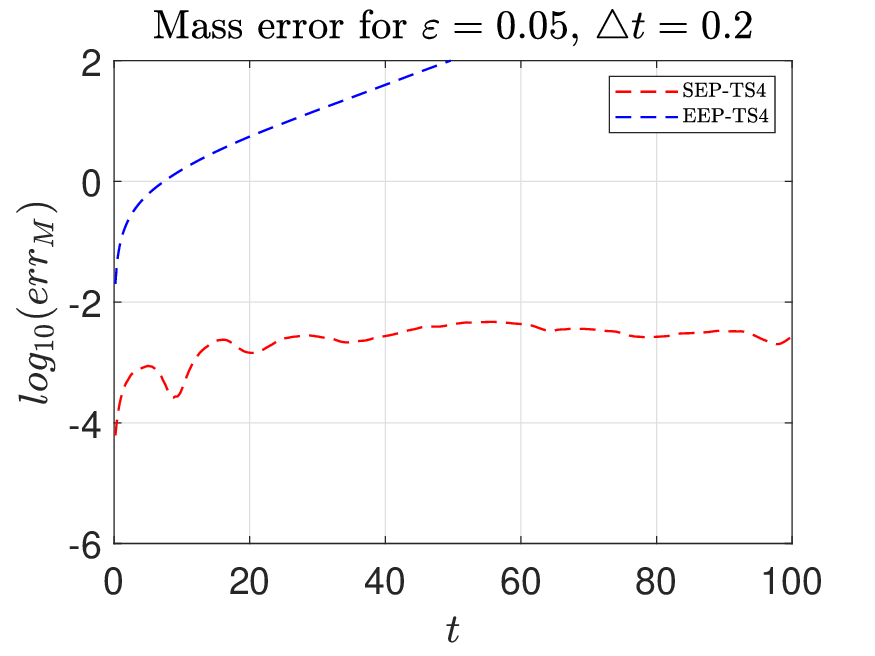,height=3cm,width=5cm}
\psfig{figure=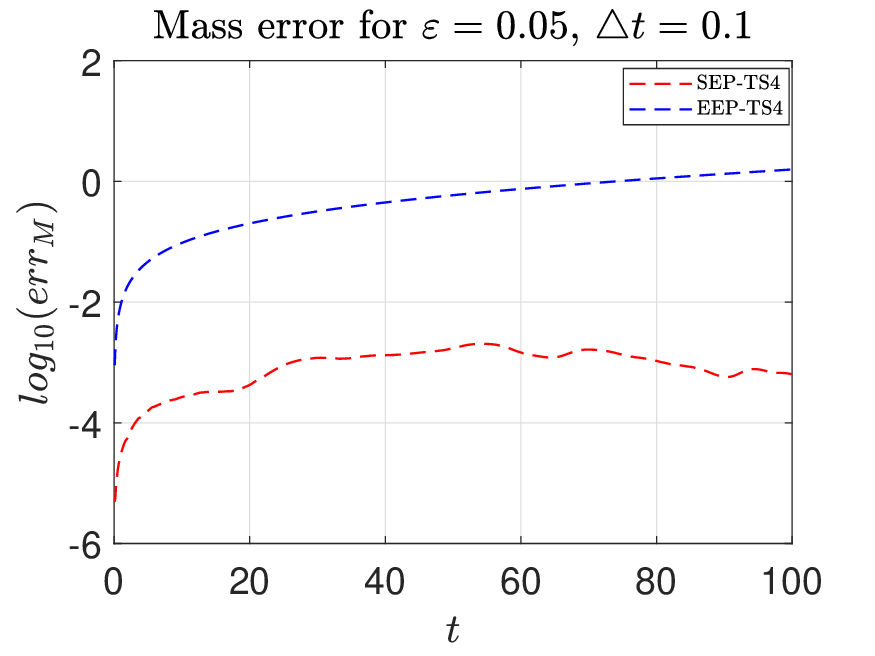,height=3cm,width=5cm}
\end{array}$$
\caption{Problem 5. Energy error (top) and mass error (bottom) of LDE \eqref{equ-5} in 2D under different $\eps$ and $\triangle t$.}\label{fig-3-10-4}
\end{figure}

\begin{figure}[t!]
$$\begin{array}{cc}
\psfig{figure=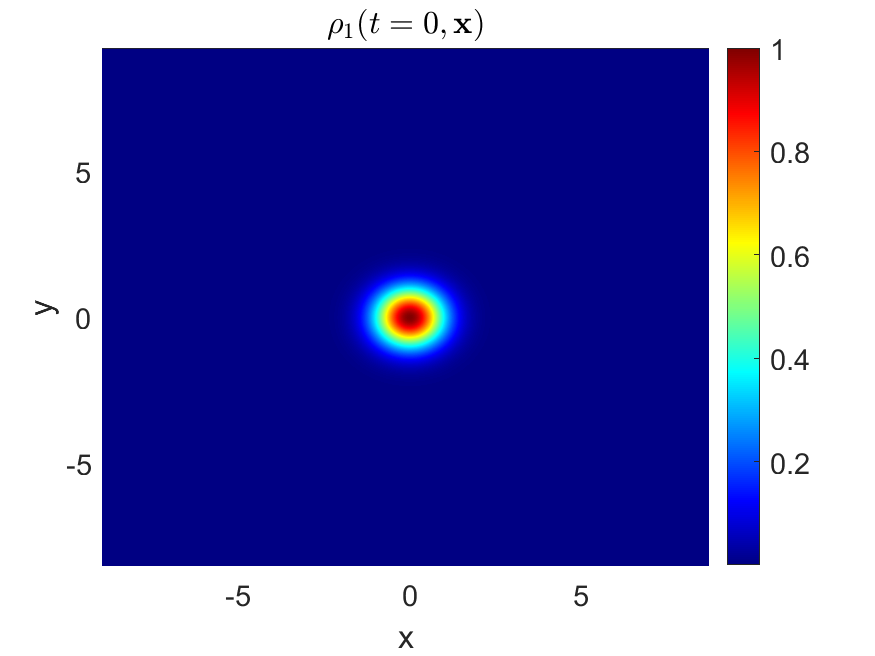,height=2.8cm,width=3.5cm}
\psfig{figure=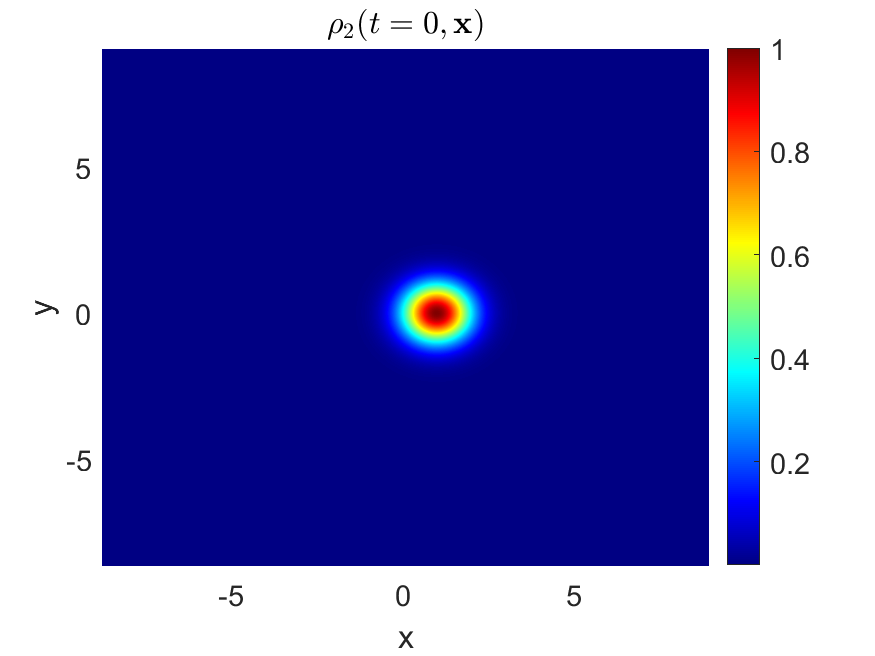,height=2.8cm,width=3.5cm}
\psfig{figure=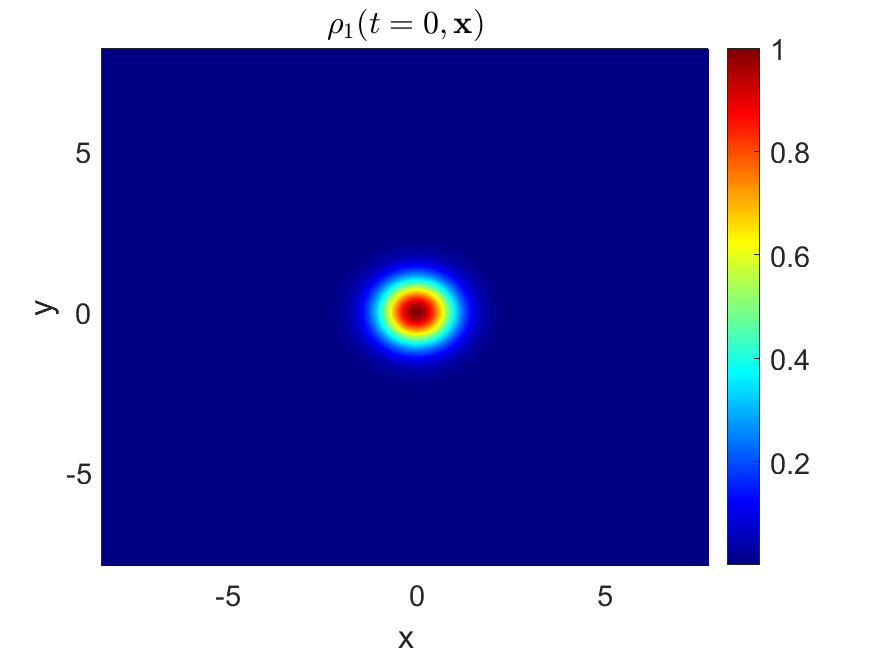,height=2.8cm,width=3.5cm}
\psfig{figure=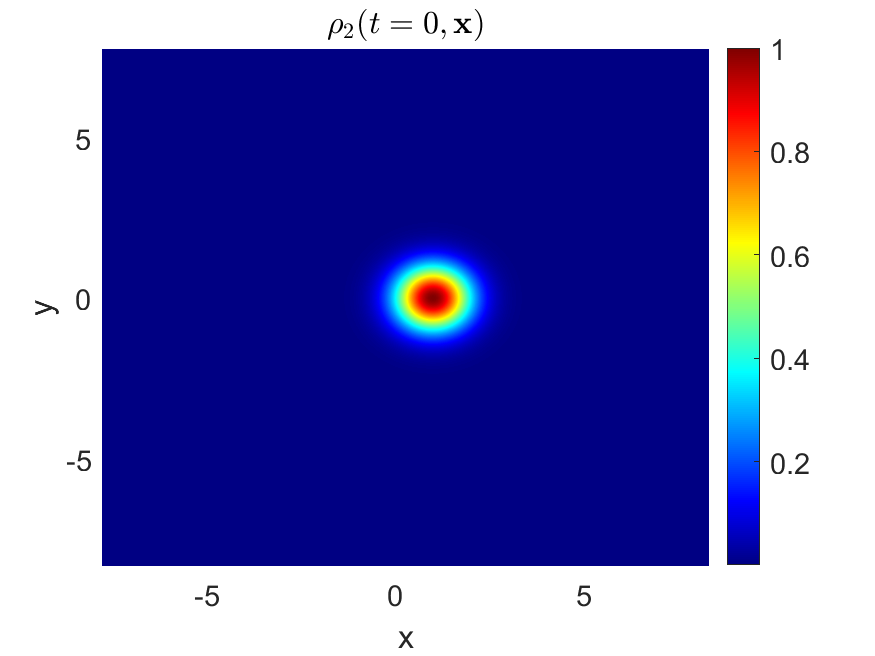,height=2.8cm,width=3.5cm}\\
\psfig{figure=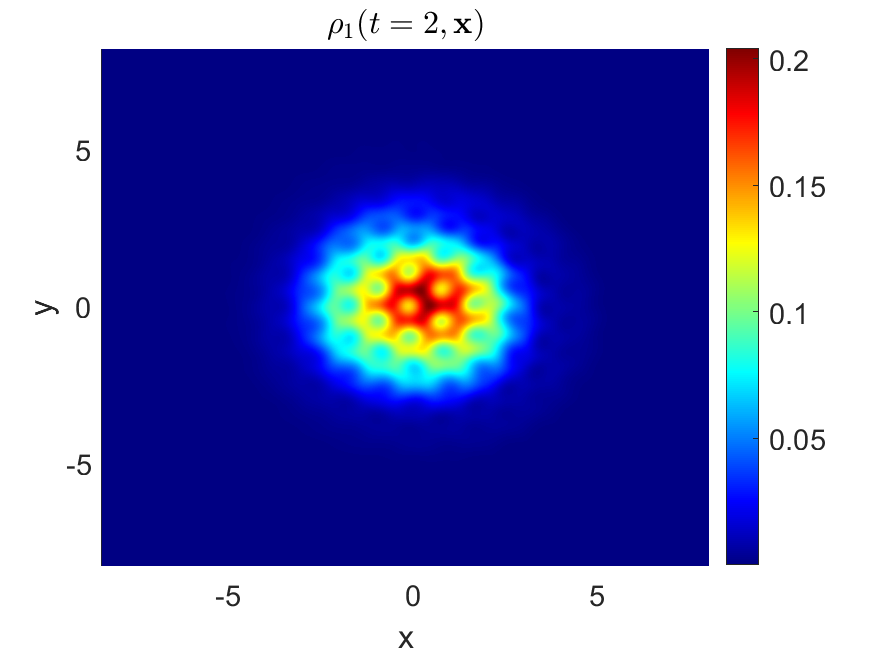,height=2.8cm,width=3.5cm}
\psfig{figure=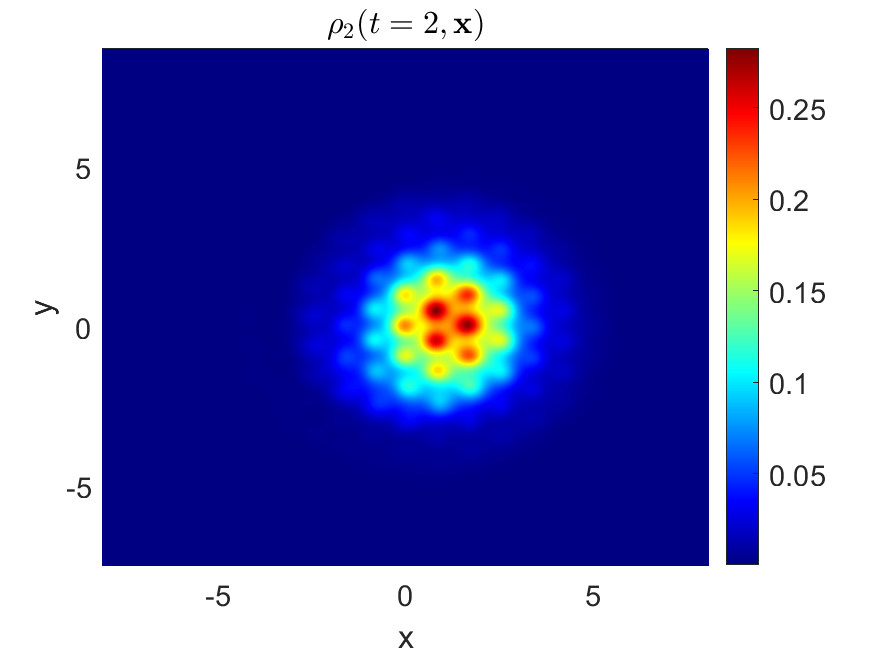,height=2.8cm,width=3.5cm}
\psfig{figure=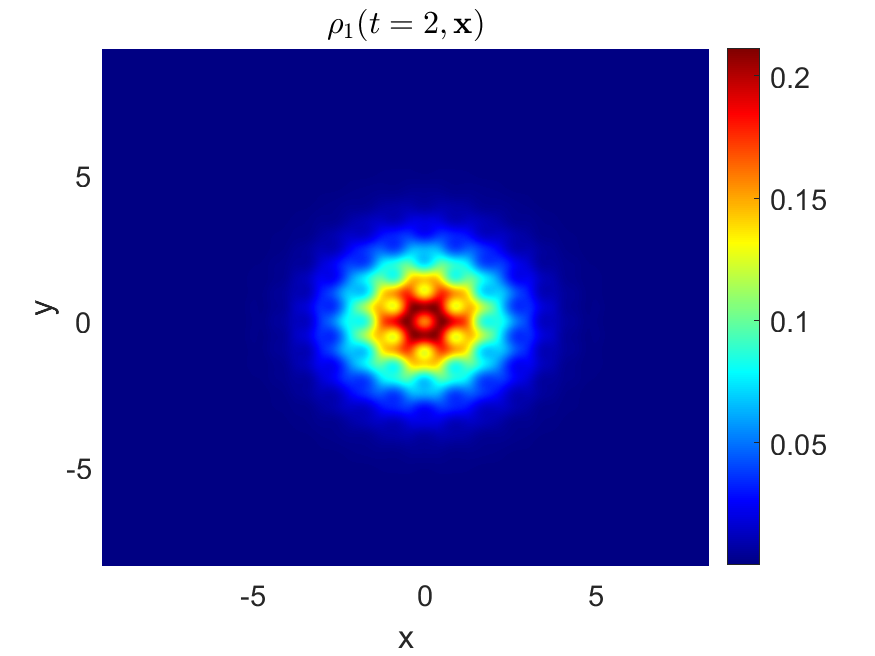,height=2.8cm,width=3.5cm}
\psfig{figure=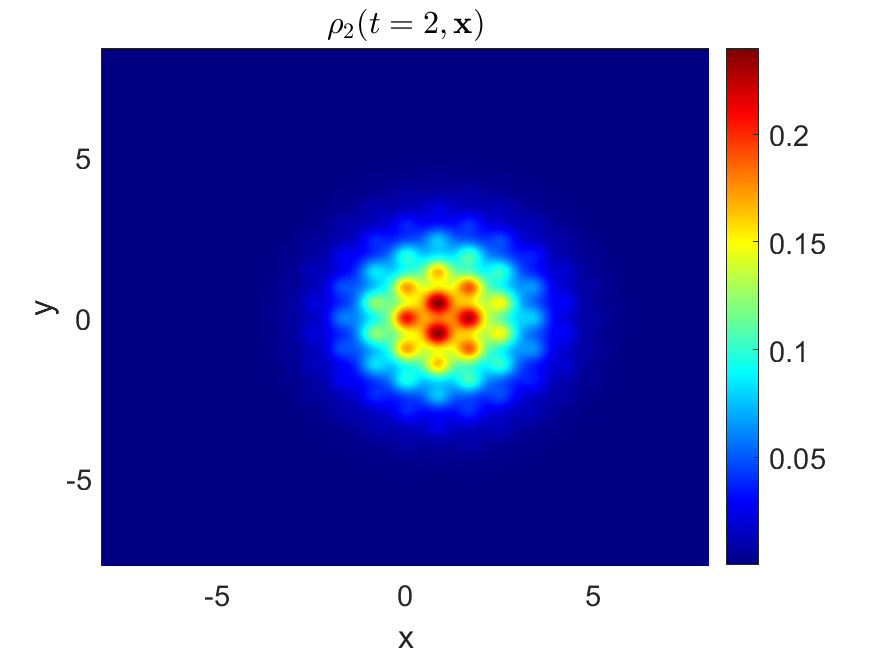,height=2.8cm,width=3.5cm}\\
\psfig{figure=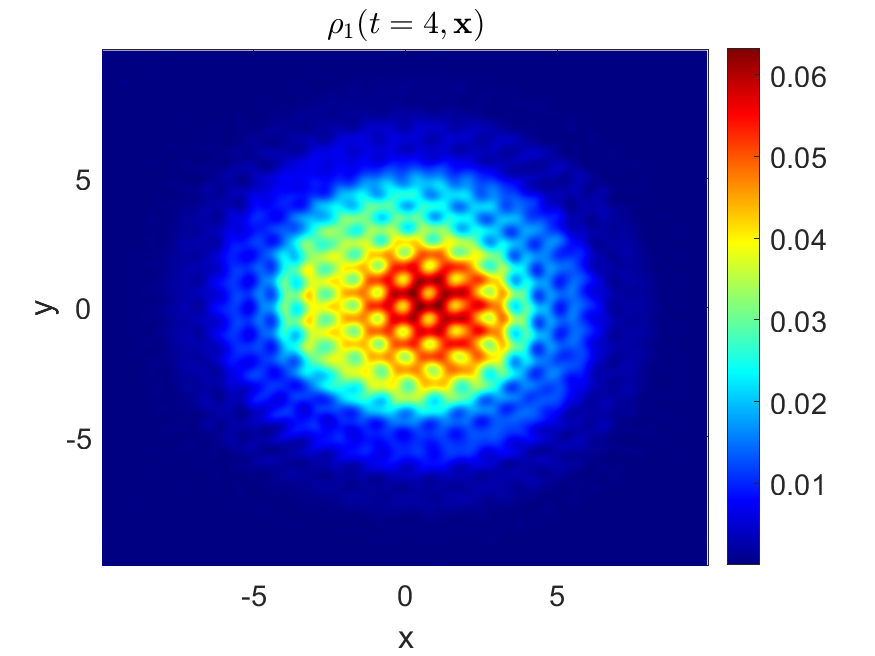,height=2.8cm,width=3.5cm}
\psfig{figure=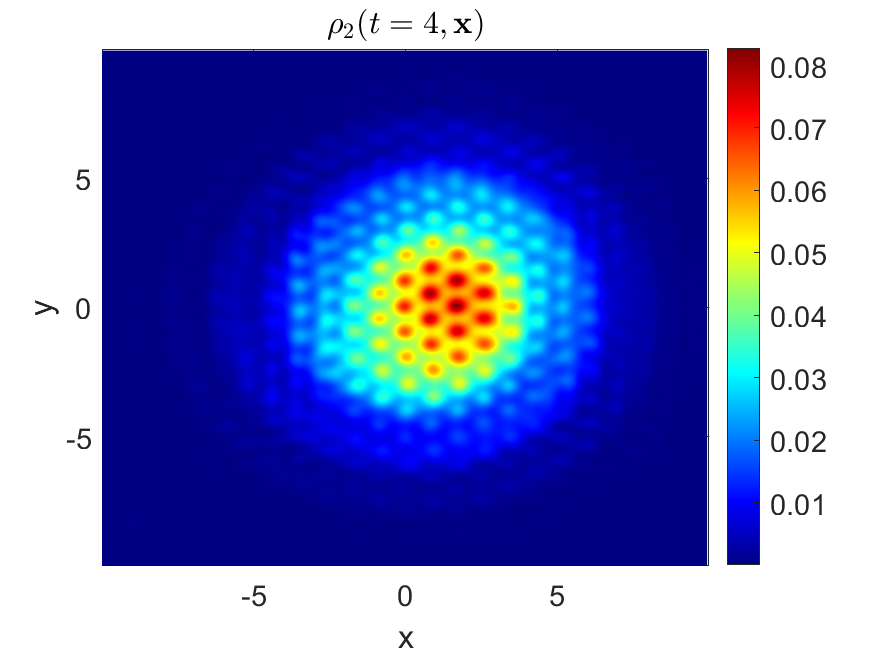,height=2.8cm,width=3.5cm}
\psfig{figure=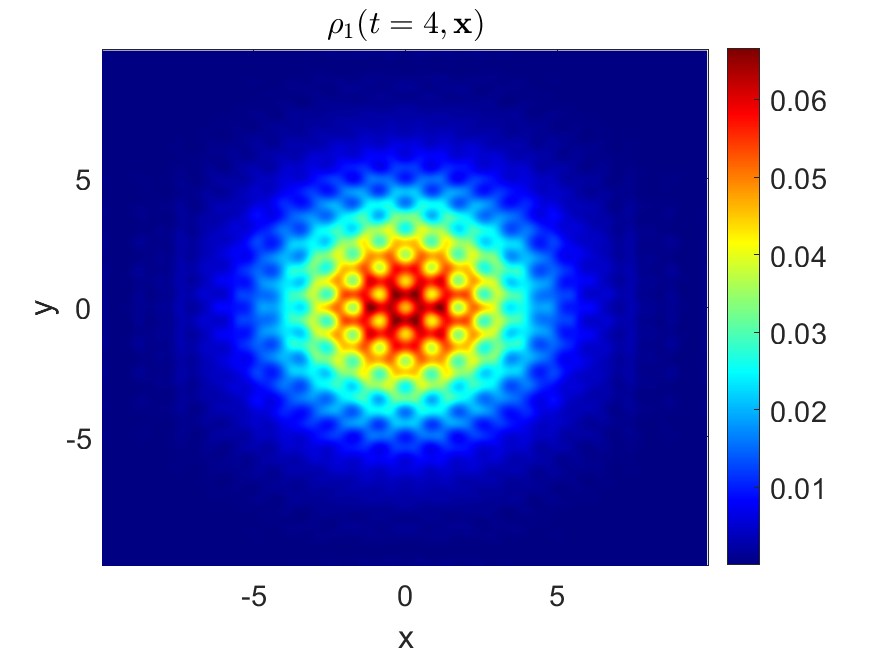,height=2.8cm,width=3.5cm}
\psfig{figure=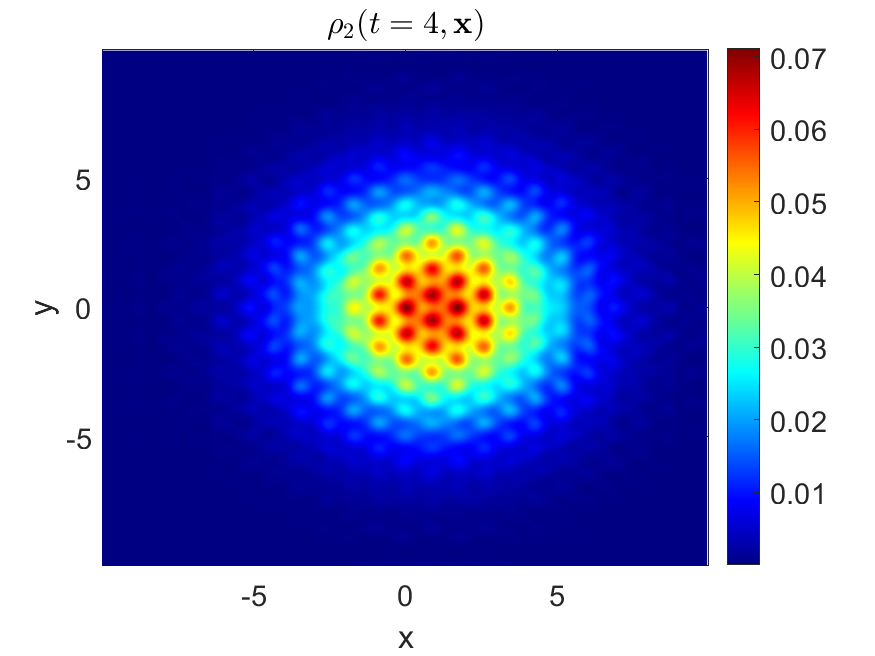,height=2.8cm,width=3.5cm}\\
\psfig{figure=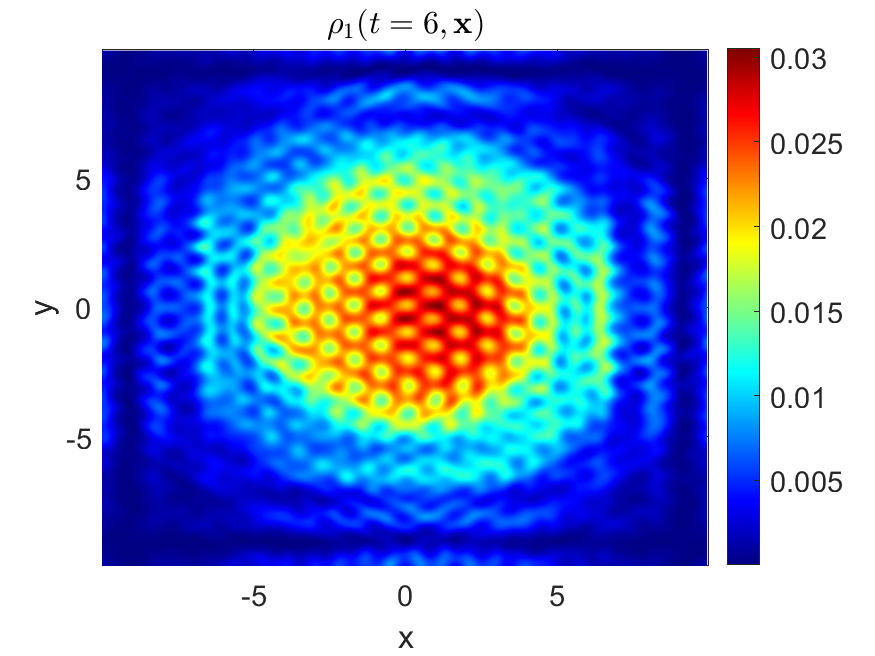,height=2.8cm,width=3.5cm}
\psfig{figure=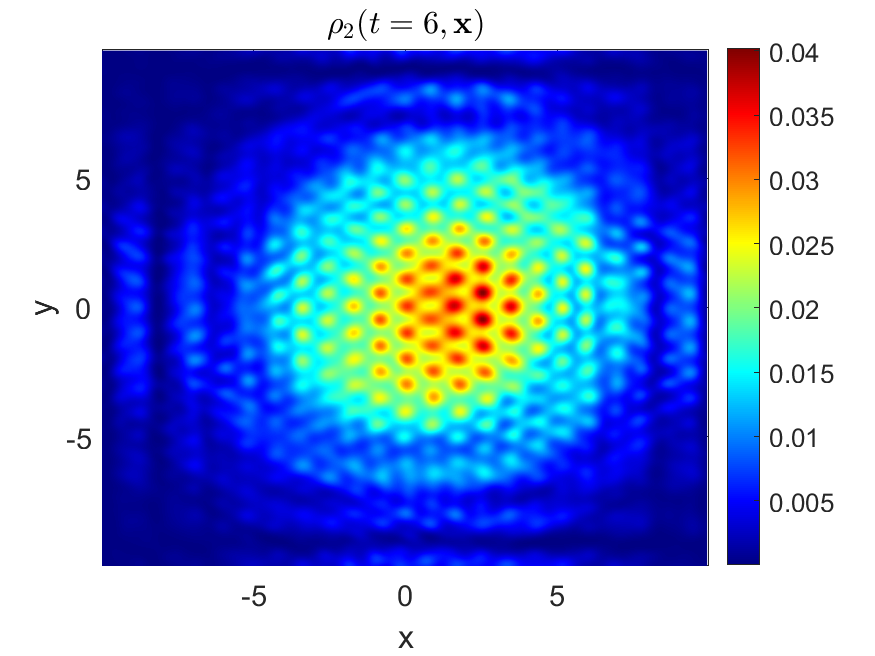,height=2.8cm,width=3.5cm}
\psfig{figure=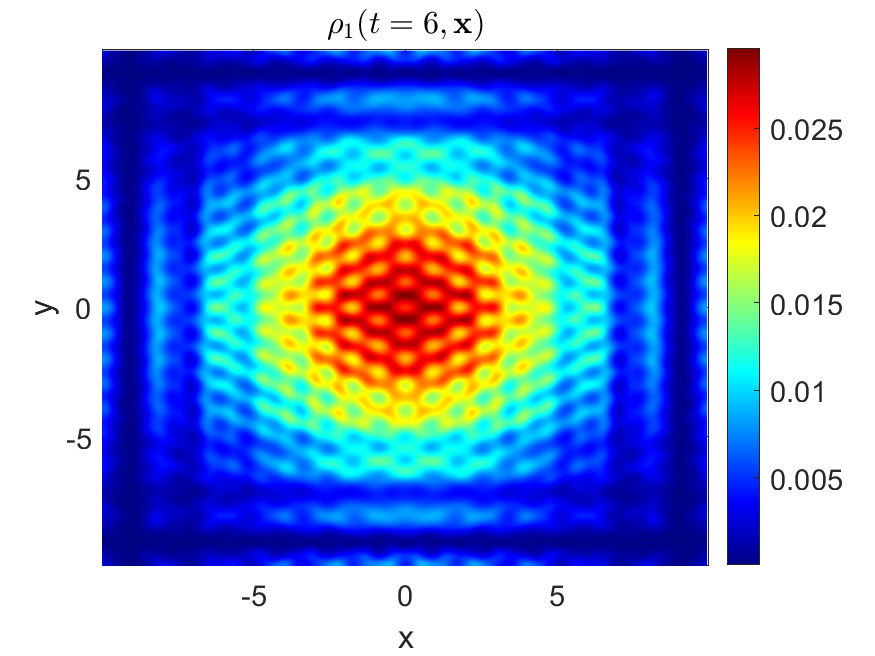,height=2.8cm,width=3.5cm}
\psfig{figure=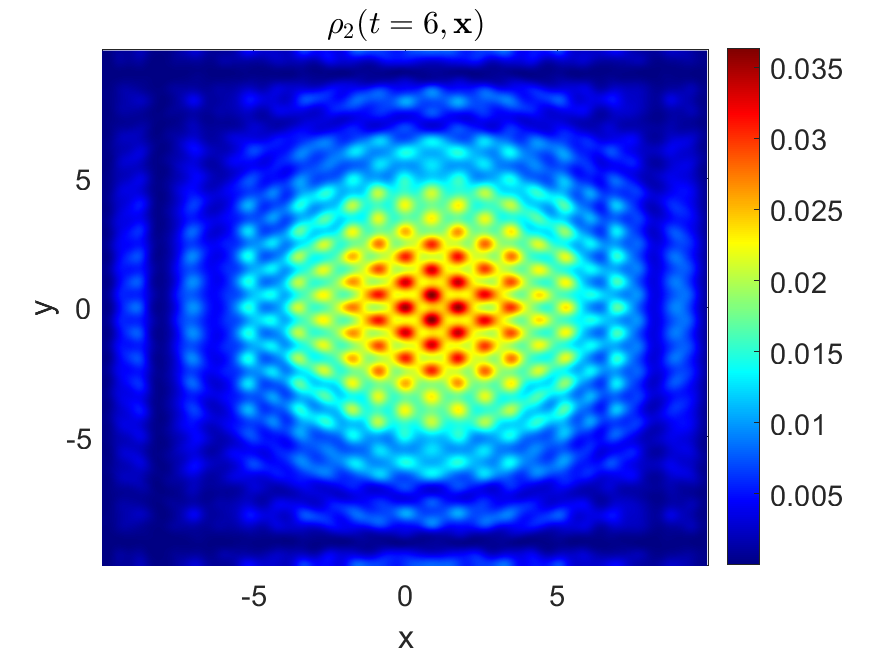,height=2.8cm,width=3.5cm}\\
\end{array}$$
\caption{Problem 5. Contour plots of the densities $\rho_{i}(t,\mathbf{x})=|\phi_{i}(t,\mathbf{x})|^{2},\ i=1,2$ of LDE \eqref{equ-5} in 2D for $\eps=0.2 \ (left \ two \ columns)$ and $\eps=0.01 \ (right \ two \ columns)$ with a honeycomb potential.}\label{fig-3-1-5}
\end{figure}

\section{Conclusion}\label{sec:con}
In this work, we  developed and analyzed two novel fourth-order integrators for solving the nonlinear Dirac equation in the nonrelativistic regime. Through a sequence of   transformations of the original system, application of the two-scale formulation approach, spectral semi-discretization, and fourth-order exponential integrators, we   constructed a symmetric integrator and an explicit scheme.
These two integrators were rigorously proved to achieve fourth-order uniform accuracy. For the symmetric scheme,   long-term energy conservation was shown  by using modulated Fourier expansion. Moreover, the proposed integrators were applied to different kinds of Dirac equation. Several numerical experiments were conducted to illustrate the superior performance  of the proposed  integrators.


\bibliographystyle{model-num-names}

\end{document}